\theoremstyle{plain}
\newtheorem{theorem}{Theorem}[section]
\newtheorem{proposition}[theorem]{Proposition}
\newtheorem{lemma}[theorem]{Lemma}
\theoremstyle{remark}
\newtheorem{remark}[theorem]{Remark}
\theoremstyle{definition}
\newtheorem{definition}[theorem]{Definition}
\numberwithin{equation}{section} 
\numberwithin{figure}{section}
\newcommand{\abs}[1]{\lvert #1 \rvert}
\newcommand{\labs}[1]{\left\lvert #1 \right\rvert}
\newcommand{\norm}[1]{\lVert #1 \rVert}
\newcommand{\scalar}[2]{\langle #1 \mid #2 \rangle}
\newcommand{\neh}{\mathcal{M}}
\newcommand{\settc}[2]{\left\{\,#1 \Bigm\vert #2\,\right\}}
\newcommand{\ua}{u_{c}}
\newcommand{\uai}{u_{\infty}}
\newcommand{\fa}{f}
\newcommand{\Ga}{G}
\newcommand{\La}{\mathcal{L}_{\lambda}}
\newcommand{\ue}{w_{c}}
\newcommand{\uei}{w_{\infty}}
\newcommand{\fe}{g}
\newcommand{\Ge}{R}
\newcommand{\Le}{\mathcal{L}_{\omega_{\infty}}}
\begin{document}

\title[Asymptotic properties of non-relativistic limit]{Asymptotic
properties of non-relativistic limit for pseudo-relativistic Hartree
equations}

\author{Pan Chen} \address[Chen]{School of Mathematical Sciences, Shanghai
Jiao Tong University \\ Shanghai 200240, P.R. China}

\author{Vittorio Coti Zelati} \address[Coti Zelati]{Dipartimento di
Matematica Pura e Applicata ``R. Caccioppoli'' \\ Universit\`a di
Napoli ``Federico II'', M. S. Angelo, 80126 Napoli, Italy}

\author{Yuanhong Wei} \address[Wei]{School of Mathematics, Jilin
University \\ Changchun 130012, P. R. China}

\begin{abstract}
	In this paper, we study the asymptotic behavior of energy and
	action ground states to the following pseudo-relativistic Hartree
	equation 
	\begin{equation*}
		\left(\sqrt{-c^2\Delta +m^2c^4}-mc^2\right)u + \lambda u =
		\left(|x|^{-1}*|u|^2\right)u
	\end{equation*}
	as the speed of light $c\to \infty$.
	
	We obtain an asymptotic expansion of the ground state as $c \to
	\infty,$ which is new in the case of the energy ground state and
	generalizes the results of Choi, Hong, and Seok \cite{MR3734982}
	for the action ground state.
\end{abstract}

\keywords{Non-relativistic limit, Pseudo-relativistic operator, Ground state}

\subjclass{35Q40  35J50 49J35}

\thanks{The first and second author thanks the third author for the
warm hospitality during their visit to the School of Mathematics of
the Jilin University}

\thanks{The third author was supported by the National Natural Science
Foundation of China (Grant No. 12571120).}

\maketitle
\setcounter{tocdepth}{1}
\tableofcontents

\section{Introduction}
We study the asymptotic behaviour as $c \to +\infty$ of the
pseudo-relativistic Hartree equation
\begin{equation}
	\label{1.1}
	i\partial_t\psi = (\sqrt{-c^2\Delta+m^2c^4}-mc^2)\psi-
	\mathcal{N}(\psi), \quad \text{ on } \mathbb{R}^3.
\end{equation}
where $\psi=\psi(t,x)$ is a complex-valued wave function,
$\mathcal{N}(\psi)=(|x|^{-1}*\abs{\psi}^2)\psi$ is a
nonlinearity of Hartree type, the symbol $*$ stands for convolution,
$c > 0$ represents the speed of light and $m > 0$ is the particle
mass.

The operator $\sqrt{-c^2\Delta+m^2c^4}$ is the pseudo-relativistic
operator which can be defined for $u\in H^1(\mathbb{R}^3)$ as the
inverse Fourier transform of the function
\begin{equation*}
	\sqrt{c^2|\xi|^2+m^2c^4}\mathcal{F}[u](\xi)
\end{equation*}
(here $\mathcal{F}[u] $ represents the Fourier transform of $u$), that
is
\begin{equation*}
	\sqrt{-c^2\Delta+m^2c^4} \, u(x)= \mathcal{F}^{-1}
	\left[\sqrt{c^2|\xi|^2 + m^2c^4} \mathcal{F}[u](\xi) \right](x),
	\qquad \forall u \in H^{1}(\mathbb{R}^3).
\end{equation*}

Here and in the following we work in Sobolev spaces
$H^{s}(\mathbb{R}^{3})$ for $s \geq 0$ defined as
\begin{equation*}
	H^{s}(\mathbb{R}^{3}) = \left\{ u \in L^{2}(\mathbb{R}^{3}) \;:\;
	(1 + |\xi|^2)^{s/2} \mathcal{F}[u](\xi) \in L^{2}(\mathbb{R}^{3})
	\right\}.
\end{equation*}

As we have done above, where we have defined the operator
$\sqrt{-c^{2}\Delta + m^{2}c^{4}}$ via Fourier transform and its
symbol $\sqrt{c^{2}\abs{\xi}^{2} + m^{2}c^{4}}$, we will identify the
differential operator $P(D)$ with its symbol $P(\xi)$ also when
$P(\xi)$ is a polynomial. In all case we have that
\begin{equation*}
	(P(D)[u])(x) = \mathcal{F}^{-1} \left[P(\xi) \mathcal{F}[u](\xi)
	\right](x)
\end{equation*}
for all $u \in H^{s}$ provided $P(\xi)(1+\abs{\xi}^{2})^{-s/2}$ is
bounded. In such a case $P(D)$ maps $H^{s+t}$ to $H^{t}$ for all $t
\geq 0$. For more information on pseudo-differential operator we refer
to \cite{Taylor_2023}.

Physically, equation \eqref{1.1} is an important model to describe the
dynamics of pseudo-relativistic boson stars in the mean field limit.
The equation \eqref{1.1} formally converges to the non-relativistic
Hartree equation
\begin{equation*}
	i\partial_t\psi=-\frac{\Delta}{2m}\psi- \mathcal{N}(\psi), \,\,
	\text{ on} \,\,\mathbb{R}^3.
\end{equation*}
as $c\to \infty$, since $\sqrt{c^2|\xi|^2+m^2c^4}-mc^2 \to
\frac{|\xi|^2}{2m}$ as $c\to \infty$.

Looking for a stationary solution, inserting $\psi(t,x)= e^{i\lambda
t}u(x)$ with $\lambda>0$, we obtain the time-independent equation
\begin{equation}\label{1.2}
    \left(\sqrt{-c^2\Delta+m^2c^4}-mc^2\right)u+\lambda u= \mathcal{N}(u),
\end{equation}
which can be considered as the relativistic versions of usual
nonlinear Hartree equation
\begin{equation}\label{1.3}
    -\frac{1}{2m}\Delta u+\lambda u= \mathcal{N}(u).
\end{equation}
The existence of positive solutions to \eqref{1.2} can be addressed by
variational methods in two different ways, either by minimizing the
action functional $\mathcal{J}_c \colon H^{1/2}(\mathbb{R}^3) \to
\mathbb{R}$:
\begin{equation*}
	\mathcal{J}_c(u):= \int_{\mathbb{R}^3}u(\sqrt{-c^2\Delta
	+m^2c^4}u-mc^2u + \lambda u) \, dx -\frac{1}{2}
	\int_{\mathbb{R}^3\times \mathbb{R}^3} \frac{u^2(x)u^2(y)}{|x-y|}
	\, d x d y
\end{equation*}
on the associated Nehari manifold
\begin{equation}
	\neh_c:= \settc{u\in H^{1/2}(\mathbb{R}^3) \setminus \{0\}}{
	d\mathcal{J}_c(u)[u] = 0 },
\end{equation}
or by minimizing the energy functional $\mathcal{E}_c \colon
H^{1/2}(\mathbb{R}^3)\to \mathbb{R}$
\begin{equation}
	\mathcal{E}_c(u):= \int_{\mathbb{R}^3}u(\sqrt{-c^2\Delta
	+m^2c^4}u-mc^2u)dx -\frac{1}{2} \int_{\mathbb{R}^3\times
	\mathbb{R}^3} \frac{u^2(x)u^2(y)}{|x-y|}d x d y
\end{equation}
on the $L^2$ sphere
\begin{equation*}
	\mathcal{S}= \settc {u\in H^{1/2}(\mathbb{R}^3)} {\|u\|_{L^2}=1}.
\end{equation*}
Similarly, in the non-relativistic case, one can find the solution of
\eqref{1.3} by minimizing the action functional $\mathcal{J}_\infty:
H^{1}(\mathbb{R}^3)\to \mathbb{R}$:
\begin{equation*}
	\mathcal{J}_\infty(u):= \frac{1}{2m}\int_{\mathbb{R}^3} |\nabla
	u|^2 + \lambda\int_{\mathbb{R}^3} | u|^2 -\frac{1}{2}
	\int_{\mathbb{R}^3\times \mathbb{R}^3} \frac{u^2(x)u^2(y)}{|x-y|}
	\, d x d y
\end{equation*}
on the manifold
\begin{equation}
	\neh_\infty := \settc {u\in H^{1}(\mathbb{R}^3) \setminus \{0\}}
	{d\mathcal{J}_\infty(u)[u] = 0} ,
\end{equation}
or by minimizing the energy functional
$\mathcal{E}_\infty : H^{1}(\mathbb{R}^3)\to \mathbb{R}$
\begin{equation}
	\mathcal{E}_\infty(u):= \frac{1}{2m}\int_{\mathbb{R}^3} |\nabla
	u|^2 -\frac{1}{2} \int_{\mathbb{R}^3\times \mathbb{R}^3}
	\frac{u^2(x)u^2(y)}{|x-y|} \, d x d y
\end{equation}
on the $L^2$ sphere
\begin{equation*}
	\mathcal{S}' = \settc {u\in H^{1}(\mathbb{R}^3)} {\|u\|_{L^2} =
	1}.
\end{equation*}

\begin{definition}[Ground states]
    Using the notations introduced above,
    \begin{enumerate}
		\item (Action ground state) A function $\ua \in \neh_c$ is
		called a (pseudo-relativistic) action ground state of
		\eqref{1.2} if
		\begin{equation*}
			\mathcal{J}_c(\ua)= \inf_{u \in \neh_c}\mathcal{J}_c(u).
		\end{equation*}
		Respectively, $\uai \in \neh_\infty$ is called a
		(non-relativistic) action ground state of \eqref{1.3} if
		\begin{equation*}
			\mathcal{J}_\infty(\uai) = \inf_{u \in
			\neh_\infty}\mathcal{J}_\infty(u).
		\end{equation*}

		\item (Energy ground state) A function $\ue \in \mathcal{S}$ is
		called an (pseudo-relativistic) energy ground state (or
		minimizers) for the variational problem
		\begin{equation}
			\label{1.8}
			e_c = \inf_{u \in \mathcal{S}} \mathcal{E}_c(u),
		\end{equation}
		if $\mathcal{E}_c(\ue)= e_c$.
		
		Respectively, $\uei \in \mathcal{S}'$ is called a
		(non-relativistic) energy ground state (or minimizers) for the
		variational problem
		\begin{equation}
			\label{1.9}
			e_\infty=\inf_{u\in \mathcal{S}'}\mathcal{E}_\infty(u)
		\end{equation}
		provided $\mathcal{E}_\infty(\uei)= e_\infty$.
    \end{enumerate}
\end{definition}

It is easy to see that the energy ground state $\ue$, $\uei$ for
problem \eqref{1.8} and \eqref{1.9} satisfies the Hartree equation,
\begin{align}
	\label{1.10}
	&\left(\sqrt{-c^2\Delta+m^2c^4}-mc^2\right) \ue +\omega_c \ue =
	\mathcal{N}(\ue), && \int_{\mathbb{R}^3}|\ue|^2 =1,
\end{align}
and
\begin{align}\label{1.11}
	&-\frac{1}{2m}\Delta \uei + \omega_\infty \uei =
	\mathcal{N}(\uei), && \int_{\mathbb{R}^3}|\uei|^2 =1,
\end{align}
with Lagrange multiplier $\omega_c$, $\omega_\infty$ respectively. 

These problems have received a lot of attention. For the
non-relativistic Hartree equations, results go back to the work of
Lieb \cite{zbMATH03576003} and PL Lions \cite{zbMATH03859846}, see
\cite{zbMATH06706641} for a comprehensive discussion of the problem.
Let us remark that the non-relativistic action and energy problems are
equivalent. One can go from solutions of one problem to solutions of
the other via multiplication and rescaling (see for example
\cite[Proposition 2.1]{MR3056699}). This is no more true for the
pseudo-relativistic action and energy, due to the behaviour of the
linear operator $\sqrt{-c^2\Delta +m^2c^4}-mc^2$ under rescaling.

Over the past few years, many works have been devoted also to the
mathematical analysis of problem \eqref{1.10}, see \cite{MR904142,
MR2318846, MR2561169,MR4078531,MR4430585,MR3788864,MR3712013}. For the
reader's convenience, we provide a concise summary of the existence
and uniqueness results concerning ground states, as well as a list of
their fundamental properties.

\begin{theorem}[Existence and properties of energy ground state]
	\label{thm:energyGS}
    There exists $c_0 > 0$ , such that the following holds.
    \begin{enumerate}
		\item (Existence and uniqueness of ground state) For $c >
		c_0$, up to phase and translation, there exists a unique
		positive radial ground state $\ue \in H^{1}(\mathbb{R}^3)$ for
		problem \eqref{1.8}. Moreover, $\ue$ satisfies the
		pseudo-relativistic Hartree equation \eqref{1.10} with some
		Lagrange multiplier $\omega_c > 0$.

		\item (Regularity) Let $\ue$ be the energy ground state, then
		$\ue$ belongs to $H^s(\mathbb{R}^3)$ for each $s>0$.
		
		\item (Non-relativistic limit) Let $\ue$ ($\uei$) be the
		unique positive radial energy ground state of the variational
		problem \eqref{1.8} (resp., \eqref{1.9}), then $\ue$ ($\uei$)
		satisfy the equation \eqref{1.10} (resp., \eqref{1.11}) with
		Lagrange multiplier $\omega_c > 0$ ($\omega_\infty > 0$).
		Moreover,
		\begin{align*}
			&\ue \to \uei \quad \text{in} \quad
			H^1(\mathbb{R}^3),\quad \text{as} \quad c\to \infty,\\
			&\omega_c \to \omega_\infty, \quad e_c\to e_\infty,\quad
			\text{as} \quad c\to \infty.
		\end{align*}
	\end{enumerate}
\end{theorem}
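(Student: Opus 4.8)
The plan for part~(1) is to solve the $L^{2}$-constrained problem \eqref{1.8} by the concentration--compactness method. The starting point is the elementary symbol estimate
\begin{equation*}
	\frac{\abs{\xi}^{2}}{2m}\bigl(1+\tfrac{\abs{\xi}^{2}}{m^{2}c^{2}}\bigr)^{-1/2} \;\le\; \sqrt{c^{2}\abs{\xi}^{2}+m^{2}c^{4}}-mc^{2} \;\le\; \frac{\abs{\xi}^{2}}{2m},
\end{equation*}
whose right inequality shows (via Hardy--Littlewood--Sobolev, absorbing the Coulomb term into the kinetic one) that $\mathcal{E}_{c}$ is bounded below on $\mathcal{S}$ and that minimizing sequences are bounded in $H^{1/2}$, while the left inequality supplies the coercivity used below. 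For a minimizing sequence I would apply Lions' lemma: vanishing is excluded by $e_{c}<0$ (test with a fixed rescaled profile) and dichotomy by the strict binding inequality $e_{c}(1)<e_{c}(\theta)+e_{c}(1-\theta)$ for the mass-constrained values, so that, up to translations, the sequence is relatively compact and produces a minimizer $\ue$ solving \eqref{1.10}; testing \eqref{1.10} against $\ue$ gives $\omega_{c}=\tfrac12 D(\ue)-e_{c}>0$, where $D(u)=\iint \abs{x-y}^{-1}u^{2}(x)u^{2}(y)\,dx\,dy$. Replacing $\ue$ by $\abs{\ue}$ does not increase $\mathcal{E}_{c}$ (the quadratic form of $\sqrt{-c^{2}\Delta+m^{2}c^{4}}$ does not increase under the modulus, and $D$ depends only on $\ue^{2}$), giving positivity, and symmetric-decreasing rearrangement together with the Riesz inequality yields the radial profile. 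For uniqueness I would use that the non-relativistic ground state $\uei$ is unique and nondegenerate: by part~(3) every ground state $\ue$ converges to $\uei$, so for $c$ large all ground states lie in a small $H^{1}$-neighbourhood of $\uei$, where the implicit function theorem applied to \eqref{1.10} around the nondegenerate $\uei$ yields a single solution branch; this fixes $c_{0}$.

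For part~(2) the idea is a regularity bootstrap based on the smoothing of the resolvent. Writing \eqref{1.10} as $\ue=(\sqrt{-c^{2}\Delta+m^{2}c^{4}}-mc^{2}+\omega_{c})^{-1}\mathcal{N}(\ue)$, the Fourier multiplier of the resolvent behaves like $\abs{\xi}^{-1}$ for large $\abs{\xi}$ and hence maps $H^{t}$ into $H^{t+1}$. Since the Hartree nonlinearity preserves $H^{t}$ for $t\ge 1/2$ --- by Hardy--Littlewood--Sobolev for the potential $\abs{x}^{-1}*\ue^{2}$ and by the multiplication estimates in Sobolev spaces (which become an algebra once $t>3/2$) --- each iteration gains a half-derivative and places $\ue$ in every $H^{s}(\mathbb{R}^{3})$.

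Part~(3) is the main point, and here the useful structural fact is that the symbol $\ell_{c}(\xi):=\sqrt{c^{2}\abs{\xi}^{2}+m^{2}c^{4}}-mc^{2}$ increases monotonically in $c$ to $\abs{\xi}^{2}/(2m)$. First I would establish $\limsup_{c\to\infty}e_{c}\le e_{\infty}$ by using $\uei$ as a test function, since $\mathcal{E}_{c}(\uei)\to\mathcal{E}_{\infty}(\uei)=e_{\infty}$ by monotone convergence of $\ell_{c}$. The upper bound and the left symbol estimate give a uniform $H^{1/2}$ bound, so along a subsequence $\ue\rightharpoonup u_{*}$ weakly in $H^{1/2}$, and the binding inequality (now uniform in $c\ge c_{0}$) upgrades this to strong $L^{2}$ convergence with $\norm{u_{*}}_{L^{2}}=1$; in particular $D(\ue)\to D(u_{*})$. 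For the kinetic term I would exploit monotonicity: for fixed $c_{1}$ and $c\ge c_{1}$ one has $\int_{\mathbb{R}^{3}}\ell_{c}\abs{\mathcal{F}[\ue]}^{2}\,d\xi\ge\int_{\mathbb{R}^{3}}\ell_{c_{1}}\abs{\mathcal{F}[\ue]}^{2}\,d\xi$, and the latter, being a nonnegative quadratic form continuous on $H^{1/2}$ (as $\ell_{c_{1}}(\xi)\le c_{1}\abs{\xi}$), is weakly lower semicontinuous there; letting first $c\to\infty$ and then $c_{1}\to\infty$ yields $\liminf_{c\to\infty}\int_{\mathbb{R}^{3}}\ell_{c}\abs{\mathcal{F}[\ue]}^{2}\,d\xi\ge\tfrac{1}{2m}\norm{\nabla u_{*}}_{L^{2}}^{2}$. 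Combining, $e_{\infty}\le\mathcal{E}_{\infty}(u_{*})\le\liminf e_{c}\le\limsup e_{c}\le e_{\infty}$, so $u_{*}$ is a minimizer, $e_{c}\to e_{\infty}$, and $u_{*}=\uei$ by uniqueness in \eqref{1.9}. Convergence of the energies then forces $\int_{\mathbb{R}^{3}}\ell_{c}\abs{\mathcal{F}[\ue]}^{2}\,d\xi\to\tfrac{1}{2m}\norm{\nabla\uei}_{L^{2}}^{2}$, and the multiplier convergence $\omega_{c}\to\omega_{\infty}$ is immediate from $\omega_{c}=\tfrac12 D(\ue)-e_{c}$.

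The hardest point is upgrading this to strong $H^{1}$ convergence, which amounts to showing that the symbol gap $\int_{\mathbb{R}^{3}}\bigl(\tfrac{\abs{\xi}^{2}}{2m}-\ell_{c}\bigr)\abs{\mathcal{F}[\ue]}^{2}\,d\xi$ --- the defect between the pseudo-relativistic kinetic energy and its non-relativistic limit evaluated on the minimizers themselves --- tends to $0$. Using $\tfrac{\abs{\xi}^{2}}{2m}-\ell_{c}(\xi)\le\abs{\xi}^{4}/(8m^{3}c^{2})$ reduces matters to a uniform-in-$c$ control of a higher Sobolev norm of $\ue$; the resolvent smoothing from part~(2), in which the high-frequency gain carries a favourable power of $1/c$, is what I expect to make the gap vanish and thereby convert the weak $H^{1}$ convergence into strong $H^{1}$ convergence. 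The same high-frequency control underlies the uniform binding inequality needed for $L^{2}$-compactness and the neighbourhood estimate used for uniqueness in part~(1).
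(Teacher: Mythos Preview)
The paper does not prove this theorem; it is quoted from the literature, with the remark immediately following it attributing existence and regularity to Lieb--Yau, the non-relativistic limit to Lenzmann, and full uniqueness to Guo--Zeng. So there is no ``paper's own proof'' to compare against, only the cited works. Your outline is broadly in the spirit of those references, but a few points deserve comment.

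Your uniqueness argument in part~(1) is essentially Lenzmann's, and it has a known gap: the implicit function theorem around $\uei$ produces a unique \emph{branch} of solutions to the equation, but you still need to know that every ground state (with its own Lagrange multiplier $\omega_c$) lies on that branch. Lenzmann could only conclude uniqueness for almost every $c>c_0$; the removal of the exceptional set is precisely the content of Guo--Zeng and requires an additional monotonicity/energy argument that your sketch does not supply. Invoking ``by part~(3) every ground state converges to $\uei$'' is circular as stated, since part~(3) is phrased for \emph{the} unique ground state; you would need to rerun part~(3) for an arbitrary family of ground states and control $\omega_c$ simultaneously.

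In part~(3), the step ``the binding inequality (now uniform in $c\ge c_0$) upgrades weak $H^{1/2}$ convergence to strong $L^2$ convergence'' is the weakest link. The binding inequality is a statement about minimizing sequences at fixed $c$; it does not directly give $L^2$-compactness of the family $\{\ue\}_c$ of minimizers as $c\to\infty$. Since the $\ue$ are radial you get compactness in $L^p$ for $2<p<3$, which suffices for $D(\ue)\to D(u_*)$, but you must still argue that no $L^2$-mass leaks to infinity; otherwise $\|u_*\|_{L^2}<1$ and the sandwich $e_\infty\le\mathcal{E}_\infty(u_*)\le\liminf e_c$ fails. Lenzmann handles this by first obtaining a uniform $H^1$ bound from the equation (his Lemma~3), which you only reach later. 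Your closing paragraph correctly identifies that the $H^1$ upgrade hinges on uniform higher-Sobolev control of $\ue$; that control is indeed available (and the paper later proves it in Lemma~\ref{Lem:2.6} by a different bootstrap than yours), but in the logical order you propose you need some version of it \emph{before} you can close the energy sandwich. A minor slip: in part~(2) you say the resolvent maps $H^t\to H^{t+1}$ but then write ``each iteration gains a half-derivative''; it gains a full derivative.
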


\begin{remark}
	The existence and regularity of a positive radial energy ground
	state goes back to Lieb and Yau \cite{MR904142} while uniqueness
	was first proved by Lenzmann in \cite{MR2561169} for almost all
	values of $c > c_{0}$ and then for all values of $c > c_{0}$ in
	\cite{MR4078531} by Guo and Zeng.
	
	The results about the non-relativistic limit are due to Lenzmann in
	\cite{MR2561169}.
\end{remark}

Similar results hold for the action ground states:
\begin{theorem}[Existence and properties of action ground states]
	\label{them:actionGS}
    There exists $c_0 > 0$ , such that the following holds.
    \begin{enumerate}
		\item (Existence and uniqueness of a ground state) For
		$c > c_0$, up to phase and translation, there exists a unique
		positive radial ground state $\ua \in H^{1}(\mathbb{R}^3)$
		which solves \eqref{1.2}.

		\item (Regularity) Let $\ua$ be the action ground state, then
		$\ua$ belongs to $H^s(\mathbb{R}^3)$ for each $s>0$.
		
		\item (Non-relativistic limit) Let $\ua$ ($\uai$) be the
		unique positive radial action ground state of the variational
		problem \eqref{1.2} (resp., \eqref{1.3}), then 
		\begin{equation*}
			\ua \to \uai \quad \text{in} \quad
			H^1(\mathbb{R}^3),\quad \text{as} \quad c\to \infty,\\
		\end{equation*}
    \end{enumerate}
\end{theorem}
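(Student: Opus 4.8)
The plan is to establish the three parts in order: the variational existence and the uniqueness in part (1), a bootstrap for part (2), and the non-relativistic limit in part (3), which is the genuinely new point. For part (1) I would write $\mathcal{J}_c(u) = Q_c(u) - \tfrac12 D(u)$, where $Q_c(u) = \int_{\mathbb{R}^3} u\,(\sqrt{-c^2\Delta+m^2c^4}-mc^2+\lambda)\,u\,dx$ is the positive-definite quadratic part and $D(u) = \int_{\mathbb{R}^3\times\mathbb{R}^3} u^2(x)u^2(y)|x-y|^{-1}\,dx\,dy$ is the Coulomb term. Since $Q_c$ is quadratic and $D$ quartic, for each $u\neq 0$ the map $t\mapsto \mathcal{J}_c(tu)$ has a unique positive maximizer $t_u = (Q_c(u)/D(u))^{1/2}$, so $\neh_c$ is a genuine constraint and $\inf_{\neh_c}\mathcal{J}_c = \tfrac12 \inf_{u\neq 0} Q_c(u)^2/D(u)$, a scale-invariant quotient. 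First I would minimize this quotient: by the symmetric-decreasing rearrangement inequality for the relativistic kinetic energy (which does not increase $Q_c$) together with the Riesz rearrangement inequality (which does not decrease $D$), and by replacing $u$ with $\abs{u}$, a minimizing sequence may be taken nonnegative and radial; for fixed $c>c_0$ the form $Q_c$ is equivalent to the $H^{1/2}$ norm, so this sequence is bounded in $H^{1/2}_{\mathrm{rad}}$, and the compact embedding of radial $H^{1/2}$ functions into $L^p$ for $2<p<3$ (recall $D(u)\le C\norm{u}_{L^{12/5}}^4$ by Hardy--Littlewood--Sobolev, with $12/5\in(2,3)$) lets the nonlinear term pass to the limit and yields a nonnegative radial minimizer. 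Rescaling onto $\neh_c$ produces a ground state $\ua$ solving \eqref{1.2}, and strict positivity follows because the resolvent $(\sqrt{-c^2\Delta+m^2c^4}-mc^2+\lambda)^{-1}$ is positivity improving. For uniqueness, any positive radial ground state is a positive radial $H^{1/2}$ solution of \eqref{1.2}, and for $c>c_0$ such a solution is unique by the results of Lenzmann \cite{MR2561169} and Guo--Zeng \cite{MR4078531} (applied with mass parameter $\lambda$); this gives uniqueness up to translation, and up to phase once complex solutions are admitted.

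For part (2) I would bootstrap on \eqref{1.2}, exactly as in the energy case of Theorem \ref{thm:energyGS}. Writing $V=\abs{x}^{-1}*\ua^2$, the inclusion $\ua\in H^{1/2}\hookrightarrow L^p$ ($2\le p\le 3$) gives $\ua^2\in L^1\cap L^{3/2}$, and since $-\Delta V=4\pi\ua^2$ one obtains $V\in L^\infty$ after a first integrability improvement. The symbol $\sqrt{c^2\abs{\xi}^2+m^2c^4}-mc^2+\lambda$ is bounded below by $\lambda>0$ and grows like $c\abs{\xi}$, so its inverse maps $H^s$ into $H^{s+1}$; rewriting \eqref{1.2} as $\ua=(\sqrt{-c^2\Delta+m^2c^4}-mc^2+\lambda)^{-1}(V\ua)$ and using that $V\in L^\infty$ and (for $s>3/2$) that $H^s$ is a multiplication algebra, each iteration gains one derivative, so $\ua\in H^s$ for every $s>0$.

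For part (3) the first task is uniform-in-$c$ bounds. Comparing the ground-state levels via the test function obtained by rescaling $\uai$ onto $\neh_c$, together with the pointwise inequality $\sqrt{c^2\abs{\xi}^2+m^2c^4}-mc^2\le \abs{\xi}^2/(2m)$, bounds $\mathcal{J}_c(\ua)$, hence $Q_c(\ua)$ and $\norm{\ua}_{H^{1/2}}$, uniformly. The key enabling observation is that the symbol $P_c(\xi):=\sqrt{c^2\abs{\xi}^2+m^2c^4}-mc^2+\lambda$ satisfies $\langle\xi\rangle/P_c(\xi)\le C(m,\lambda)$ uniformly in $c\ge c_0$, so $P_c(D)^{-1}$ maps $L^2$ into $H^1$ with $c$-independent norm; combined with a uniform $L^\infty$ bound on $V$ and the identity $\ua=P_c(D)^{-1}(V\ua)$, this upgrades the $H^{1/2}$ bound to a uniform $H^1$ bound. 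Then I would pass to a weak $H^1$ limit $\ua\rightharpoonup u_*$ along a subsequence; radial compactness gives strong $L^{12/5}$ convergence, so the Coulomb term converges, and since $\sqrt{c^2\abs{\xi}^2+m^2c^4}-mc^2\to \abs{\xi}^2/(2m)$ pointwise, $u_*$ is a nonnegative radial $H^1$ solution of \eqref{1.3}; by uniqueness of the non-relativistic action ground state $\uai$, $u_*=\uai$, and the whole family converges. Finally, strong $H^1$ convergence I would obtain by writing $\ua-\uai=(P_c(D)^{-1}-P_\infty(D)^{-1})(V_c\ua)+P_\infty(D)^{-1}(V_c\ua-V_\infty\uai)$ with $P_\infty=-\tfrac{1}{2m}\Delta+\lambda$; the second term tends to $0$ in $H^1$ by the strong $L^2$ convergence of the nonlinearity and the boundedness of $P_\infty(D)^{-1}$, while the first tends to $0$ because $\langle\xi\rangle(P_c(\xi)^{-1}-P_\infty(\xi)^{-1})\to 0$ pointwise and boundedly.

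I expect the main obstacle to be the uniform $H^1$ bound in part (3): only the low-frequency part of the relativistic kinetic energy controls $\abs{\xi}^2\abs{\widehat{\ua}}^2$, so testing the equation with $\ua$ does not by itself bound $\norm{\nabla\ua}_{L^2}$ uniformly in $c$. The uniform smoothing of $P_c(D)^{-1}$ must instead be combined with a $c$-uniform bootstrap (in particular a uniform $L^\infty$ bound on $V$) to close the estimate, and the same uniform smoothing then drives the strong convergence.
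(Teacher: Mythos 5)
The decisive gap is your uniqueness claim in part (1). The theorems of Lenzmann \cite{MR2561169} and Guo--Zeng \cite{MR4078531} assert uniqueness of the \emph{energy} ground state, i.e.\ of the minimizer of $\mathcal{E}_c$ under a fixed $L^2$ mass, where the multiplier $\omega_c$ is an unknown determined by the constraint; they cannot simply be ``applied with mass parameter $\lambda$'' to the fixed-$\lambda$ equation \eqref{1.2}. In the non-relativistic setting the two problems are interchangeable by multiplication and rescaling, but exactly this fails for $\sqrt{-c^2\Delta+m^2c^4}-mc^2$, which is not homogeneous under dilations --- the paper points this out explicitly, and states that uniqueness of the \emph{action} ground state for large $c$ had no detailed proof in the literature, which is why Section \ref{sec:action} exists. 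Concretely: two distinct positive radial solutions of \eqref{1.2} with the same $\lambda$ could a priori carry different masses $N_1\neq N_2$, and uniqueness of the energy minimizer for each fixed mass rules nothing out unless one also knows injectivity of the map $N\mapsto\omega_c(N)$, which you do not establish. The paper instead derives uniqueness \emph{from} the non-relativistic limit: it first proves $\ua\to\uai$ in every $H^s$ for an arbitrary family of positive radial action ground states (Lemmas \ref{Lem:A2}--\ref{conv_0}, no uniqueness used), and then invokes the local uniqueness result of \cite[Proposition 3]{MR2561169} near the nondegenerate limit $\uai$ (proof of Proposition \ref{Prop:A1}). Since your part (3) supplies the convergence, the repair is to reorder the logic and replace your citation by Lenzmann's Proposition 3; as written, part (1) is unsupported.

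The remainder is essentially sound and in part takes a legitimately different route from the paper: your $c$-uniform bounds via the coercivity $P_c(\xi)+\lambda\gtrsim\sqrt{1+\abs{\xi}^2}$ and the uniform smoothing of $(P_c(D)+\lambda)^{-1}$ is the method of Choi--Hong--Seok \cite{MR3734982}, which the paper deliberately replaces by a direct bootstrap on the equation (Lemma \ref{Lem:2.6}); and your strong $H^1$ convergence via the resolvent decomposition replaces the paper's Palais--Smale argument (Lemmas \ref{Lem:A7}--\ref{Lem:A8}) combined with the invertibility of $\La$ (Lemma \ref{conv_0}). Two repairable slips remain there. First, pointwise-and-bounded convergence of the multiplier $(1+\abs{\xi}^2)^{1/2}\bigl(P_c(\xi)^{-1}-P_\infty(\xi)^{-1}\bigr)$ does \emph{not} suffice when the operator is applied to the $c$-dependent family $V_c\ua$; you need the quantitative bound $0\le P_\infty(\xi)-P_c(\xi)\le \abs{\xi}^4/(8m^3c^2)$ from Lemma \ref{Lem:4.1}, which yields $\norm{(P_c(D)^{-1}-P_\infty(D)^{-1})g}_{H^1}\lesssim c^{-2}\norm{g}_{H^2}$, together with a $c$-uniform $H^2$ bound on $\ua$ --- available by iterating your own smoothing step with the trilinear estimate of Lemma \ref{Lem:2.2}, so this closes. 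Second, you must exclude $u_*=0$ in the weak-limit step: the Nehari identity $Q_c(\ua)=D(\ua)\lesssim\norm{\ua}_{H^{1/2}}^4\lesssim Q_c(\ua)^2$ gives a $c$-uniform lower bound $D(\ua)\gtrsim 1$, which survives the strong $L^{12/5}$ limit and makes the identification with $\uai$ legitimate.
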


\begin{remark}
	For the existence and regularity of a positive, radial action ground
	states see for example \cite{MR2799908}.
	
	About the uniqueness of the action ground state for $c$ large and
	the non-relativistic limit of the action ground state, we have not
	seen an accurate and detailed proof in the relevant literature.
	For this reason we give a proof in section \ref{sec:action}, see
	in particular Proposition \ref{Prop:A1}. Our method is related to
	the one used in \cite{MR3457922} to study the non-relativistic
	limit with a power nonlinearity. On this see also
	\cite{MR3734982} (paper in which the authors study the asymptotic
	behaviour for the pseudo-relativistic equation with the Hartree
	nonlinearity).
\end{remark}

To introduce our main results, let us observe that the symbol of the
operator 
\begin{equation*}
	P_{c}(D) = \sqrt{-c^2\Delta +m^2c^4}-mc^2
\end{equation*}
has the following Taylor series expansion for $\abs{\xi} < mc$
\begin{equation*}
	P_{c}(\xi) = \sqrt{c^2|\xi|^2 +m^2c^4}-mc^2 = P_\infty(\xi) +
	\sum_{j=1}^{\infty} \frac{P_{\infty,j}(\xi)}{c^{2j}},
\end{equation*}
where $P_\infty(\xi)= \frac{|\xi|^2}{2m}$,
$P_{\infty,j}(\xi)=\frac{(-1)^j\alpha_{j+1}}{m^{2j+1}}|\xi|^{2j+2}$,
$\alpha_j =\frac{(2j-2)!}{j!(j-1)!2^{2j-1}}$. This expansion leads to
the following asymptotic estimate for the pseudo-relativistic operator
$P_c(D)$ in the operator norm:
\begin{equation*}
	\left\| P_c(D) - P_\infty(D) - \sum_{j=1}^{n-1}
	\frac{P_{\infty,j}(D)}{c^{2j}} \right\|_{\mathcal{L}(H^{2n+2+s};
	H^s)} \lesssim \frac{1}{c^{2n}},
\end{equation*}
see Remark \ref{remark2.4} for details. In view of non-relativistic
limit, it is natural to ask whether the action or energy ground state
$u$ has the same asymptotic expansion, that is, for each $n\in
\mathbb{N}$, whether there exists $f_{1}, f_{2},\cdots, f_{n-1} $ such
that for any $s>0$,
\begin{equation}\label{1.12}
	\left\|u -u_\infty- \sum_{j=1}^{n-1}\frac{f_{j}}{c^{2j}}
	\right\|_{H^s}\lesssim\frac{1}{c^{2n}}.
\end{equation}
When $n=1$, \eqref{1.12} has been proved in \cite{MR3734982} for
action ground state, where the non-degeneracy of the non-relativistic
ground state plays a crucial role. 

In our paper, we consider the asymptotic properties of this solution
like \eqref{1.12} for each $n\in \mathbb{N}$ as $c\to \infty$.
Compared with \cite{MR3734982}, we adopt a different approach to
obtain the regularity of ground state in higher order Sobolev space,
and our results extend that of \cite{MR3734982} to each order
convergence rate. We also give details on the non-relativistic limit
of the action ground states.

\begin{theorem}[Asymptotic properties of action ground states]
	\label{them:1.3}
	Let $c > c_{0}$ and $\ua$, $\uai$ be the unique positive radial
	action ground state for the equation \eqref{1.2} and \eqref{1.3}
	respectively. Then for each $j \in \mathbb{N}$ there exists a
	unique radial function $\fa_{ j} \in \bigcap\limits_{s>0}
	H^{s}(\mathbb{R}^3)$, such that for all $n\in\mathbb{N}$
    \begin{equation*}
		\left\|\ua - \uai - \sum_{j=1}^{n-1} \frac{\fa_{j}}{c^{2j}}
		\right\|_{H^s} \lesssim \frac{1}{c^{2n}}.
    \end{equation*}
	The functions $\fa_{j}$ $(j\geq 1)$ are defined recursively as
	solutions of the following linear differential equation
	\begin{equation}
		\La \fa_j = - \sum_{k=0}^{j-1} P_{\infty, j-k}(D) \fa_{ k} +
		\sum_{k=2}^{\min\{j,3\}}\frac{1}{k!}
		\sum_{\substack{i_1+i_2+\cdots+i_k = j\\1\leq i_1,
		i_2,\cdots, i_k \leq j-1}}
		\mathcal{N}^{(k)}(\fa_0)[\fa_{i_1}, \fa_{i_2},\cdots,
		\fa_{i_k}],
    \end{equation}
	where $\La$ is the linearization of the nonlinear Hartree equation
	\eqref{1.3} around the solution $\uai$ (see Lemma \ref{Lem:3.1}),
	$\mathcal{N}^{(k)}$ is the $k$-th derivative of $\mathcal{N}$,
	$f_0 = u_\infty$.
\end{theorem}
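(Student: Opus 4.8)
The plan is to combine a formal asymptotic expansion in powers of $c^{-2}$ with a rigorous control of the remainder. First I would insert the formal series $\ua = \sum_{j\ge 0}\fa_j c^{-2j}$, with $\fa_0=\uai$, into equation \eqref{1.2}. For the linear part I use the symbol expansion $P_c(\xi)=P_\infty(\xi)+\sum_{l\ge 1}P_{\infty,l}(\xi)c^{-2l}$ recalled in the introduction, so that the coefficient of $c^{-2j}$ in $P_c(D)\ua$ equals $P_\infty(D)\fa_j+\sum_{k=0}^{j-1}P_{\infty,j-k}(D)\fa_k$. For the nonlinear part I exploit that $\mathcal{N}$ is a homogeneous cubic map, so that $\mathcal{N}(\ua)=\sum_{k=0}^{3}\tfrac{1}{k!}\mathcal{N}^{(k)}(\fa_0)[\ua-\fa_0,\dots,\ua-\fa_0]$ is an \emph{exact} finite Taylor expansion around $\fa_0=\uai$; collecting the coefficient of $c^{-2j}$ produces the term $\mathcal{N}'(\fa_0)\fa_j$ together with the multilinear terms in $\fa_{i_1},\dots,\fa_{i_k}$ with $i_1+\dots+i_k=j$ and $1\le i_l\le j-1$. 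Matching the coefficient of $c^{-2j}$ and recalling that $\La=P_\infty(D)+\lambda-\mathcal{N}'(\uai)$ (Lemma \ref{Lem:3.1}) then yields exactly the stated recursion; at order $c^{0}$ one recovers \eqref{1.3} for $\uai$.

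Next I would establish solvability and regularity of the $\fa_j$ by induction on $j$. The right-hand side at step $j$ involves only $\fa_0,\dots,\fa_{j-1}$; assuming these are radial and lie in $\bigcap_{s>0}H^s$, the right-hand side is radial and belongs to $\bigcap_{s>0}H^s$ as well, since each $P_{\infty,j-k}(D)$ is a constant-coefficient power of $-\Delta$ and the Hartree multilinear forms map radial $H^s$ functions to radial $H^s$ functions (convolution with $|x|^{-1}$ being regularizing). It then remains to invert $\La$, and here the non-degeneracy of $\uai$ is decisive: the kernel of $\La$ on $H^1(\mathbb{R}^3)$ is spanned by the translations $\partial_{x_i}\uai$, which are non-radial, so $\La$ is injective on the radial subspace; being a compact, self-adjoint perturbation of the positive invertible operator $P_\infty(D)+\lambda$, it is Fredholm of index zero there, hence invertible on radial functions. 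Elliptic regularity for $\La$ (which gains two derivatives) then upgrades the solution to $\fa_j\in\bigcap_{s>0}H^s$, and injectivity gives uniqueness, closing the induction.

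With all the $\fa_j$ in hand I would fix $n$, set
\[ U_n := \uai+\sum_{j=1}^{n-1}\frac{\fa_j}{c^{2j}}, \]
and estimate the residual $R_n:=P_c(D)U_n+\lambda U_n-\mathcal{N}(U_n)$. By the construction above, every coefficient of $c^{0},c^{-2},\dots,c^{-2(n-1)}$ in $R_n$ vanishes. The remaining contributions are of two kinds: the difference between $P_c(D)$ and a sufficiently long truncation of its symbol expansion applied to the fixed, $H^s$-bounded functions $\fa_j$, controlled by the operator-norm estimate of Remark \ref{remark2.4}; and the purely algebraic terms of the exact cubic expansion of $\mathcal{N}(U_n)$ whose total order in $c^{-2}$ is at least $n$. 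Both are $\lesssim c^{-2n}$ in every $H^s$, giving $\|R_n\|_{H^s}\lesssim c^{-2n}$.

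Finally, setting $v=\ua-U_n$, which is radial since $\ua$ and all $\fa_j$ are, and subtracting the equations satisfied by $\ua$ and $U_n$, I obtain $\mathcal{A}_c v=-R_n+\tfrac12\mathcal{N}''(U_n)[v,v]+\tfrac16\mathcal{N}'''[v,v,v]$, where $\mathcal{A}_c=P_c(D)+\lambda-\mathcal{N}'(U_n)$. The hard part, which I expect to be the main obstacle, is the uniform-in-$c$ invertibility of $\mathcal{A}_c$ on the radial subspace. Writing $\mathcal{A}_c=(P_c(D)+\lambda)\bigl(I-(P_c(D)+\lambda)^{-1}\mathcal{N}'(U_n)\bigr)$ and using that $(P_c(D)+\lambda)^{-1}\mathcal{N}'(U_n)\to(P_\infty(D)+\lambda)^{-1}\mathcal{N}'(\uai)$ in operator norm as $c\to\infty$ (by $U_n\to\uai$ and resolvent convergence), the radial invertibility of the limit $\La$ yields, by continuity of inversion, invertibility of $\mathcal{A}_c$ for $c$ large with $\|\mathcal{A}_c^{-1}\|$ bounded uniformly in $c$. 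A subtlety is that $P_c(D)$ is only first order at high frequencies, so its inverse gains a single derivative; this still suffices to run a bootstrap on $s$. Combining the uniform bound with the residual estimate and the a priori smallness $\|v\|_{H^1}\to 0$ from Theorem \ref{them:actionGS}, the quadratic and cubic terms in $v$ are absorbed and one concludes $\|v\|_{H^s}\lesssim c^{-2n}$ for every $s>0$. Throughout, I would also need uniform-in-$c$ higher-Sobolev bounds on $\ua$, obtained by a separate regularity argument (different from \cite{MR3734982}), for instance by testing the equation against powers of $\langle D\rangle$ and using the coercivity of $P_c(D)+\lambda$.
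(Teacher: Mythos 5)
Your proposal reaches the correct recursion and is sound in outline, but it controls the error by a genuinely different mechanism than the paper. The construction of the $\fa_j$ (matching powers of $c^{-2}$, then solving $\La \fa_j = \dots$ by nondegeneracy, Fredholm theory on the radial subspace, and elliptic bootstrap) coincides with the paper's Lemmas \ref{Lem:3.1} and \ref{lem:existencefj}. The difference is in the remainder estimate: the paper never inverts a $c$-dependent operator. It introduces the rescaled differences $\fa_{c,k}=c^{2}(\fa_{c,k-1}-\fa_{k-1})$, derives for each $k$ an equation of the form $\La(\fa_{c,k}-\fa_k)=\text{(terms that are } o_c(1))$ for the \emph{fixed} limiting linearization $\La$, and closes the induction with Lemma \ref{Lem:3.3}, getting $\|\fa_{c,n}-\fa_n\|_{H^s}=o_c(1)$; the rate $c^{-2n}$ for the $(n-1)$-term sum then follows simply by carrying the expansion one order further. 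You instead build the approximant $U_n$, prove consistency ($\|R_n\|_{H^s}\lesssim c^{-2n}$), and prove stability by inverting the $c$-dependent linearization $\mathcal{A}_c=P_c(D)+\lambda-\mathcal{N}^{(1)}(U_n)$ uniformly in $c$. Your route yields the $c^{-2n}$ bound in one shot and has the clean consistency-plus-stability structure of an implicit function theorem argument; its price is exactly the uniform invertibility of $\mathcal{A}_c$, which the paper's iteration is designed to avoid.

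Two steps you assert (and correctly identify as the crux) need to be supplied for the argument to close. First, the operator-norm resolvent convergence: it is true that $\|(P_c(D)+\lambda)^{-1}-(P_\infty(D)+\lambda)^{-1}\|_{\mathcal{L}(H^s;H^s)}\to 0$, because the difference is the Fourier multiplier $(P_\infty(\xi)-P_c(\xi))/\bigl((P_c(\xi)+\lambda)(P_\infty(\xi)+\lambda)\bigr)$, and combining $0\le P_\infty(\xi)-P_c(\xi)\le\min\{|\xi|^4/(8m^3c^2),\,P_\infty(\xi)\}$ with the uniform coercivity \eqref{eq:coercive} gives a bound uniform in $\xi$ that vanishes as $c\to\infty$; this symbol computation must appear, since it is precisely where the first-order high-frequency behaviour of $P_c$ is tamed. (Likewise, the compactness of $\mathcal{N}^{(1)}(\uai)$ relative to $P_\infty(D)+\lambda$, which you invoke for the Fredholm step, is not automatic on $\mathbb{R}^3$ and requires the weak-convergence argument of Lemmas \ref{lemm:2.6_new} and \ref{lemm:3.1_new}.) Second, absorbing $\mathcal{N}^{(2)}(U_n)[v,v]$ in $H^s$ requires smallness of $\|v\|_{H^s}$ in the \emph{same} norm, not only of $\|v\|_{H^1}$: you must first upgrade $\|\ua-\uai\|_{H^1}\to 0$ to $\|\ua-\uai\|_{H^s}\to 0$ for every $s$, for instance by interpolating the $H^1$ convergence against the uniform higher-Sobolev bounds you already require (this upgrade is exactly the content of the paper's Lemma \ref{conv_0}). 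With these two points filled in, your proof is complete.
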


We have a result also for the energy ground states. This result is new
and its proof requires estimate on the asymptotic behavior of the
Lagrange multiplier $\omega_{c}$ (see Lemma \ref{newlemma:1} and Lemma
\ref{newlemma:2}).
\begin{theorem}[Asymptotic properties of energy ground states]
	\label{them:1.5}
	Let $c > c_{0}$ and $\ue$, $\uei$ be the unique positive radial
	energy ground state of the variational problem \eqref{1.8} and
	\eqref{1.9} respectively. For each $j \in \mathbb{N}$, there
	exists a unique radial function $\fe_{j} \in \bigcap \limits_{s>0}
	H^{s}(\mathbb{R}^3)$ and constants $a_j$, $b_j\in \mathbb{R}$,
	such that for all $n \in \mathbb{N}$
	\begin{equation*}
		\left\| \ue - \uei - \sum_{j=1}^{n-1} \frac{\fe_{j}}{c^{2j}}
		\right\|_{H^s} \lesssim \frac{1}{c^{2n}},
	\end{equation*}	
	and 
	\begin{align*}
		&\left|e_{c} - e_{\infty} - \sum_{j=1}^{n-1}
		\frac{a_{j}}{c^{2j}} \right|\lesssim\frac{1}{c^{2n}}, 
		&&\left|\omega_{c} - \omega_{\infty} - \sum_{j=1}^{n-1}
		\frac{b_{j}}{c^{2j}}\right| \lesssim \frac{1}{c^{2n}}.
	\end{align*}	
	The functions $\fe_{j}$ $(j \geq 1)$ are defined recursively
	as solutions of the following linear differential equation
	\begin{equation}
		\begin{split}
			\Le \fe_j =& - \sum_{k=0}^{j-1} P_{\infty, j-k}(D) \fe_{
			k} -\sum_{k=0}^{j-1} b_{j-k}\fe_k\\
			&+ \sum_{k=2}^{\min\{j,3\}} \frac{1}{k!}
			\sum_{\substack{i_1+i_2+\cdots+i_k = j\\1\leq i_1,
			i_2, \cdots, i_k \leq j-1}}
			\mathcal{N}^{(k)}(\fe_0)[\fe_{i_1}, \fe_{i_2},\cdots,
			\fe_{i_k}],
		\end{split}
    \end{equation}
	where $\Le$ is the linearization of the nonlinear Hartree equation
	\eqref{1.11} around the solution $\uei$ (see Lemma \ref{Lem:3.1}),
	$\fe_0 = \uei$, $a_{1} = -\frac{1}{8m^3} \|\Delta \uei
	\|_{L^2}^2$, $b_1 = \frac{5}{8m^3} \|\Delta \uei \|_{L^2}^2$. The
	constants and $a_{j}$ and $b_{j}$ are defined in terms of the
	$\fe_{i}$ for $i = 1, \ldots j-1$, see the Lemmas \ref{newlemma:1}
	and \ref{newlemma:2} for their construction.
\end{theorem}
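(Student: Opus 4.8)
The plan is to run, for the pair $(\ue,\omega_c)$, the same matched-asymptotics scheme used for the action ground state in Theorem \ref{them:1.3}, but carrying along the scalar unknowns $b_j$ (from the Lagrange multiplier) and $a_j$ (from the energy) that the constraint $\norm{\ue}_{L^2}=1$ forces upon us. Concretely, I would insert the formal ansatz $\ue=\uei+\sum_{j\ge1}c^{-2j}\fe_j$ and $\omega_c=\omega_\infty+\sum_{j\ge1}c^{-2j}b_j$ into \eqref{1.10}, using the symbol expansion $P_c(D)=P_\infty(D)+\sum_{j\ge1}c^{-2j}P_{\infty,j}(D)$ and the fact that $\mathcal{N}$ is cubic, so that $\mathcal{N}(\uei+v)$ terminates at its third derivative. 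Matching the coefficient of $c^{-2j}$ puts $\Le\fe_j=P_\infty(D)\fe_j+\omega_\infty\fe_j-\mathcal{N}'(\uei)[\fe_j]$ on the left and reproduces exactly the stated right-hand side, in which $b_j$ enters only through the term $-b_j\uei$ (recall $\fe_0=\uei$).

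The recursive solvability is where the energy problem departs from the action problem. At step $j$ the unknowns are the radial function $\fe_j$ and the scalar $b_j$, governed by the linear PDE $\Le\fe_j=R_j-b_j\uei$ (with $R_j$ built from $\fe_0,\dots,\fe_{j-1}$, $b_1,\dots,b_{j-1}$) together with the order-$c^{-2j}$ part of the mass constraint, $2\langle\uei,\fe_j\rangle=-\sum_{k=1}^{j-1}\langle\fe_k,\fe_{j-k}\rangle$. Since $\uei$ is simultaneously the action ground state at $\lambda=\omega_\infty$, the non-degeneracy in Lemma \ref{Lem:3.1} makes $\Le$ invertible on the radial subspace, so $\fe_j=\Le^{-1}R_j-b_j\,\Le^{-1}\uei$ is affine in $b_j$, and inserting this into the scalar constraint determines $b_j$ uniquely provided $\langle\uei,\Le^{-1}\uei\rangle\neq0$. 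I would establish this transversality from the exact scaling of the non-relativistic ground state: writing $u_\omega$ for the positive radial solution of $P_\infty(D)u_\omega+\omega u_\omega=\mathcal{N}(u_\omega)$, one checks $u_\omega(x)=\omega\,u_1(\omega^{1/2}x)$, whence $\norm{u_\omega}_{L^2}^2=\omega^{1/2}\norm{u_1}_{L^2}^2$ is strictly increasing; differentiating the equation in $\omega$ gives $\Le\,\partial_\omega u_\omega=-u_\omega$, so $\langle\uei,\Le^{-1}\uei\rangle=-\tfrac12\frac{d}{d\omega}\norm{u_\omega}_{L^2}^2\big|_{\omega=\omega_\infty}\neq0$ (here $\uei=u_{\omega_\infty}$). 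Regularity $\fe_j\in\bigcap_{s>0}H^s$ then follows by induction, since $\Le^{-1}$ gains two derivatives, the operators $P_{\infty,j-k}(D)$ and the Hartree terms (via Hardy--Littlewood--Sobolev) preserve $\bigcap_s H^s$, and $\fe_0=\uei$ is smooth by Theorem \ref{thm:energyGS}. The coefficients $a_j$ are read off by substituting the expansion into $\mathcal{E}_c$; at first order the constraint forces $\langle\uei,\fe_1\rangle=0$, which kills the $\fe_1$ contribution through the equation for $\uei$ and leaves $a_1=\langle\uei,P_{\infty,1}(D)\uei\rangle=-\tfrac1{8m^3}\norm{\Delta\uei}_{L^2}^2$, consistent with the stated value, while $b_1$ comes from pairing \eqref{1.10} with $\ue$ to get $\omega_c+e_c=\tfrac12\iint|x-y|^{-1}\ue^2(x)\ue^2(y)$.

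For the rigorous justification I would first secure uniform-in-$c$ higher regularity, $\norm{\ue}_{H^s}\le C_s$ for $c>c_0$ and every $s$; this is the ingredient that replaces the non-degeneracy argument of \cite{MR3734982} and is what allows the symbol remainder to be controlled despite its loss of derivatives. Then set $U_n=\uei+\sum_{j=1}^{n-1}c^{-2j}\fe_j$, $\Omega_n=\omega_\infty+\sum_{j=1}^{n-1}c^{-2j}b_j$, and use the operator-norm estimate of Remark \ref{remark2.4} together with the smoothness of the $\fe_j$ to show that $(U_n,\Omega_n)$ solves \eqref{1.10} with residual $\norm{r_n}_{H^s}\lesssim c^{-2n}$ and violates $\norm{u}_{L^2}=1$ by $O(c^{-2n})$. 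Viewing $(\ue,\omega_c)$ as a zero of $F(u,\omega)=(P_c(D)u+\omega u-\mathcal{N}(u),\,\norm{u}_{L^2}^2-1)$ on the radial scale, I would invert $F$ near $(U_n,\Omega_n)$ by a quantitative contraction argument: the bordered linearization converges, as $c\to\infty$, to the operator $(\phi,\mu)\mapsto(\Le\phi+\mu\uei,\,2\langle\uei,\phi\rangle)$, whose invertibility is exactly the transversality $\langle\uei,\Le^{-1}\uei\rangle\neq0$ (its Schur complement), with the derivative loss in $P_c(D)-P_\infty(D)-\dots$ absorbed by the uniform regularity bounds. This yields a genuine positive radial solution within $O(c^{-2n})$ of $(U_n,\Omega_n)$ in every $H^s$; by the uniqueness in Theorem \ref{thm:energyGS} it coincides with $(\ue,\omega_c)$, giving the $H^s$-expansion of $\ue$ and the expansion of $\omega_c$, while the energy expansion follows by plugging $\ue=U_n+O(c^{-2n})$ into $\mathcal{E}_c$ and using the computed $a_j$. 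Induction on $n$ (or directly on the truncation level) propagates the bounds to all orders, with the constructions of $a_j,b_j$ recorded in Lemmas \ref{newlemma:1} and \ref{newlemma:2}.

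The main obstacle is the coupling between the function correction $\fe_j$ and the scalar $b_j$: unlike the action case, the linear problem at each order is a bordered system whose solvability, and the uniform-in-$c$ invertibility of the full linearization, both rest on the transversality $\langle\uei,\Le^{-1}\uei\rangle\neq0$. Proving this cleanly (through the exact mass-frequency scaling $\norm{u_\omega}_{L^2}^2=\omega^{1/2}\norm{u_1}_{L^2}^2$) and combining it with the uniform higher-regularity bounds needed to tame the derivative loss in the symbol estimate is the delicate part; once these are in place, the order-by-order bookkeeping for $a_j$ and $b_j$ is routine by comparison with the action-ground-state argument.
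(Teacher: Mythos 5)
Your proposal takes a genuinely different route from the paper. The paper never solves a coupled system at each order: it works directly with the exact minimizer, shows that the rescaled differences $\fe_{c,j}=c^{2j}\bigl(\ue-\sum_{k<j}c^{-2k}\fe_k\bigr)$ converge by inverting only the $c$-independent operator $\Le$ against residuals evaluated on $\ue$ (Lemma \ref{Lem:3.3}), and it obtains the constants $b_j$ \emph{before} solving for $\fe_j$, via the Pohozaev identity (Lemma \ref{Lem:4.3}) and the expansions of $e_c$ and $\omega_c$ in Lemmas \ref{newlemma:1} and \ref{newlemma:2}; no transversality condition is ever needed. You instead couple $(\fe_j,b_j)$ through the mass constraint into a bordered system and resolve it using $\scalar{\uei}{\Le^{-1}\uei}\neq 0$, proved by the scaling $u_\omega(x)=\omega u_1(\omega^{1/2}x)$. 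That formal part is correct and elegant, and it is consistent with the paper (indeed $\scalar{\uei}{\fe_1}=0$ follows from $\norm{\ue}_{L^2}=\norm{\uei}_{L^2}=1$), and your value of $a_1$ checks out. The problems are in the rigorous justification.

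First, the quantitative contraction step has a real derivative-loss obstruction that you dismiss with ``absorbed by the uniform regularity bounds.'' The operator $P_c(D)-P_\infty(D)$ is $O(c^{-2})$ only as a map $H^{s+4}\to H^s$ (Remark \ref{remark2.4}), while as a map $H^{s+2}\to H^s$ it is merely $O(1)$; hence the linearization of your map $F$ at $(U_n,\Omega_n)$ is not a small perturbation of the limiting bordered operator in any fixed $H^{s+2}\to H^s$ setting, and a naive fixed-point iteration in a fixed Sobolev space does not close (each iterate loses derivatives). Uniform regularity of $\ue$ or of $U_n$ is irrelevant here: what must be controlled are higher norms of the \emph{unknown} perturbation $\phi$, which no a priori bound on the ground state provides. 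A repair exists — run the contraction in the energy norm, where \eqref{eq:coercive} gives coercivity of $P_c(D)+\omega$ uniformly in $c$, prove uniform invertibility of the bordered operator there by a compactness/contradiction argument passing to the limiting transversal operator, then bootstrap the regularity of the resulting solution — but this is a substantive missing piece, and it is exactly the difficulty the paper's architecture is designed to avoid. Second, the identification step is wrong as stated: your contraction produces a radial solution of the Euler--Lagrange system $P_c(D)u+\omega u=\mathcal{N}(u)$, $\norm{u}_{L^2}=1$, but Theorem \ref{thm:energyGS} asserts uniqueness of the positive radial \emph{ground state} (minimizer); the constructed solution is not known to be a minimizer, nor even positive ($H^s$-proximity to $\uei$ does not rule out sign changes in the tail). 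The fix is to use the local uniqueness furnished by your own contraction ball together with the a priori convergence $\ue\to\uei$ in every $H^s$ (from Lenzmann's $H^1$ convergence plus uniform $H^s$ bounds and interpolation, or as in Lemma \ref{conv_energy}) to place $(\ue,\omega_c)$ inside that ball; the appeal to global ground-state uniqueness should be dropped.
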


\begin{remark}
	Our results deals only with the nonlinearity $\mathcal{N}(\psi)$ of
	Hartree-type. It could be interesting to see if they can be
	extended to cover the case of other nonlinearities (for example
	the cubic one) or the case of boosted ground states, that is time
	dependent solutions of the form $\psi(t,x) = e^{i\lambda t} u(x -
	\nu t)$ for some $\nu \in \mathbb{R}^{3}$, $\abs{\nu} \leq 1$.
	
	On this last problem see \cite{MR2318846} and the recent results
	by Chen and Wang in \cite{zbMATH08142333} where the
	non-relativistic limit for such solutions is studied with
	techniques similar to the one developed in \cite{MR3734982}.
\end{remark}

\subsection*{Notations}
Throughout this paper, we make use of the following notations.
\begin{itemize}
	\item $\|\cdot\|_{L^q}$ denotes the usual norm of the Lebesgue
	space $L^q\left(\mathbb{R}^3\right)$; 
	
	\item $\|\cdot\|_{H^s}$ denotes the usual norm of the Sobolev
	space $H^s\left(\mathbb{R}^3\right)$; 
	
	\item $\scalar{\cdot}{\cdot}$ denotes the usual scalar product of
	the Hilbert space $L^2\left(\mathbb{R}^3\right)$; 
	
	\item If ${F} \colon H \to \mathbb{R}$ with $d{F}(u)$ we denote
	the Frechet derivative of ${F}$;

	\item $C$ is some positive constant that may change from line to
	line; \item $a\lesssim b$ means that $a \leq C b$;

	\item $\|P\|_{\mathcal{L}(H;K)}$ denotes the operator norm of the
	bounded linear operator $P$ from the Banach space $H$ to $K$.
\end{itemize}

\section{Notation and preliminary results}
We let
\begin{equation*}
	P_c(D):=\sqrt{-c^2\Delta +m^2c^4}-mc^2
\end{equation*}
and, for all $u \in H^{1/2}$,
\begin{equation*}
	\mathcal{H}(u) =\int_{\mathbb{R}^{3} \times \mathbb{R}^{3}}
	\frac{u^{2}(x) u^{2}(y)}{\abs{x - y}}.
\end{equation*}
Then, for all $u$, $v \in H^{1/2}(\mathbb{R}^{3})$
\begin{equation*}
	d\mathcal{H}(u)[v] = 4\scalar{\mathcal{N}(u)}{v}
\end{equation*}
and, letting
\begin{equation*}
	\scalar{P_{c}(D)u}{v} = \int_{\mathbb{R}^{3}}
	(\sqrt{c^2\abs{\xi}^{2} +m^2c^4} - mc^2) \mathcal{F}[u](\xi)
	\mathcal{F}[v](\xi) \, d\xi
\end{equation*}
we have that 
\begin{equation*}
	\mathcal{J}_{c}(u)= \scalar{P_{c}(D)u}{u} + \lambda
	\norm{u}^{2}_{L^{2}} - \frac{1}{2} \mathcal{H}(u)
\end{equation*}
and
\begin{equation*}
	d\mathcal{J}_{c}(u)[v] = 2\scalar{P_{c}(D)u}{v} + 2\lambda
	\scalar{u}{v} -  2\scalar{\mathcal{N}(u)}{v}.
\end{equation*}

The symbol of operator $P_c(D):=\sqrt{-c^2\Delta +m^2c^4}-mc^2$ has
Taylor expansion converging for $\abs{\xi} < m c$
\begin{equation*}
	P_c(\xi)= \sqrt{c^2|\xi|^2 +m^2c^4}-mc^2 = mc^2
	\left(\sqrt{\frac{|\xi|^2}{m^2c^2}+1} -1 \right) =
	\sum_{k=1}^{\infty} \frac{(-1)^{k-1}\alpha_k}{m^{2k-1}c^{2k-2}}
	|\xi|^{2k}
\end{equation*}
where $\alpha_k =\frac{(2k-2)!}{k!(k-1)!2^{2k-1}}$. We denote the
difference between $\sqrt{c^2|\xi|^2 +m^2c^4}-mc^2$ and first $n$
terms of its Taylor expansion by $P_{c,n}(\xi)$, that is
\begin{equation*}
    P_{c,n}(\xi):=  \sqrt{c^2|\xi|^2 +m^2c^4}-mc^2
    -\sum_{k=1}^{n}\frac{(-1)^{k-1}\alpha_k}{m^{2k-1}c^{2k-2}}|\xi|^{2k},
\end{equation*}
and the corresponding operator is
\begin{equation*}
	P_{c,n}(D) := \sqrt{-c^2\Delta +m^2c^4} - mc^2 - \sum_{k=1}^{n}
	\frac{(-1)^{k-1}\alpha_k}{m^{2k-1}c^{2k-2}}(-\Delta)^k.
\end{equation*}
For convenience, we set
\begin{equation*}
	P_{\infty,n}(D):=\frac{(-1)^{n}\alpha_{n+1}}{m^{2n+1}}(-\Delta)^{n+1},
	\,\,P_\infty(D):=P_{\infty, 0}(D)= -\frac{\alpha_1}{m}\Delta,
\end{equation*}
then
\begin{equation*}
    P_{c,n+1}(D) = P_{c,n}(D) - \frac{1}{c^{2n}}P_{\infty, n}(D).
\end{equation*}

\begin{lemma}\label{Lem:4.1}
	For all $c > 0$ and $n \in \mathbb{N}$ the operator $P_{c, n}(D)$
	is elliptic of order $2n$. Its symbol $P_{c,n} (\xi)$ satisfies
	\begin{equation*}
		\abs{P_{c, n}(\xi)} \geq C\abs{\xi}^{2n} \qquad \text{for $\xi$ 
		large,}
	\end{equation*}
	and the following holds for all $\xi \in \mathbb{R}^{3}$:
    \begin{align}
		P_{c, n}(\xi) 
		\begin{cases}
			\ \leq 0 & \text{ n is odd} \\
			\ \geq 0 & \text{ n is even}
		\end{cases}
    \end{align}
	and
	\begin{equation}
		\label{eq:PcnEstimate}
		\abs{P_{c, n}(\xi)} \leq
		\frac{C_{n}|\xi|^{2n+2}}{m^{2n+1}c^{2n}}.
	\end{equation}
	In particular
	\begin{equation*}
		\abs{P_{c, 2}(\xi)} = \labs{\sqrt{c^2|\xi|^2 +m^2c^4}- mc^2 -
		\frac{\abs{\xi}^{2}}{2m} + \frac{\abs{\xi}^{4}}{8m^{3}c^{2}}}
		\leq \frac{C_{2}|\xi|^{6}}{m^{5}c^{4}}.
	\end{equation*}
\end{lemma}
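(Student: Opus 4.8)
The plan is to reduce all three assertions to a one-variable analysis of the Taylor remainder of $g(t) = \sqrt{1+t}$. Substituting $t = \abs{\xi}^2/(m^2c^2) \geq 0$, one has $\sqrt{c^2\abs{\xi}^2 + m^2c^4} = mc^2 g(t)$, while the $k$-th subtracted monomial equals $mc^2 \binom{1/2}{k} t^k$ because $\binom{1/2}{k} = (-1)^{k-1}\alpha_k$. Hence $P_{c,n}(\xi) = mc^2 R_n(t)$, where $R_n(t) = g(t) - \sum_{k=0}^n \binom{1/2}{k} t^k$ is exactly the $n$-th Taylor remainder of $g$ at the origin.

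For the sign statement and the upper bound I would apply Taylor's theorem with the Lagrange form of the remainder. Since $g^{(n+1)}(s) = \binom{1/2}{n+1}(n+1)!\,(1+s)^{1/2-(n+1)}$, there is a $\theta \in (0,t)$ with $R_n(t) = \binom{1/2}{n+1}(1+\theta)^{-(n+1/2)} t^{n+1} = (-1)^n \alpha_{n+1}(1+\theta)^{-(n+1/2)} t^{n+1}$. This single closed-form expression yields two facts at once. First, for $t > 0$ the factor $(1+\theta)^{-(n+1/2)}$ is positive, so $\operatorname{sign} R_n(t) = (-1)^n$, which translates into $P_{c,n}(\xi) \leq 0$ for $n$ odd and $P_{c,n}(\xi) \geq 0$ for $n$ even. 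Second, $\theta > 0$ forces $(1+\theta)^{-(n+1/2)} \leq 1$, whence $\abs{R_n(t)} \leq \alpha_{n+1} t^{n+1}$; undoing the substitution via $\abs{\xi}^{2n+2} = m^{2n+2}c^{2n+2} t^{n+1}$ gives $\abs{P_{c,n}(\xi)} = mc^2 \abs{R_n(t)} \leq C_n \abs{\xi}^{2n+2}/(m^{2n+1}c^{2n})$ with $C_n = \alpha_{n+1}$, which also recovers the stated $n=2$ case.

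The ellipticity and lower bound require a separate argument, since the Lagrange remainder gives no lower bound (the point $\theta$ may be as large as $t$, so $(1+\theta)^{-(n+1/2)}$ can be arbitrarily small). Instead I would examine $R_n(t)$ directly for large $t$: the highest-degree subtracted monomial is $\binom{1/2}{n} t^n = (-1)^{n-1}\alpha_n t^n$, while $g(t) = \sqrt{1+t} = O(\sqrt{t})$ is negligible, so $R_n(t)/t^n \to -(-1)^{n-1}\alpha_n = (-1)^n\alpha_n$ as $t \to \infty$. Thus $\abs{R_n(t)} \geq \tfrac12 \alpha_n t^n$ for $t$ large, which rescales to $\abs{P_{c,n}(\xi)} \geq \tfrac{\alpha_n}{2 m^{2n-1}c^{2n-2}}\abs{\xi}^{2n}$ for $\abs{\xi}$ large; together with the smoothness of the symbol this exhibits $P_{c,n}(D)$ as elliptic of order $2n$.

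The only genuinely delicate point is keeping all three estimates global in $\xi$. The tempting route of manipulating the power series for $P_c(\xi)$ term by term fails for $\abs{\xi} \geq mc$, where that series diverges. Working instead with the finite Taylor polynomial and the Lagrange remainder of the smooth function $g(t) = \sqrt{1+t}$ sidesteps this entirely, because both are defined and estimated for every $t \geq 0$ regardless of convergence. With that choice the sign and the upper bound fall out of the one closed-form expression for $R_n(t)$, and only the large-$\xi$ lower bound needs the extra asymptotic observation above.
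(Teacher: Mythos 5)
Your proof is correct and, for the sign and the upper bound, it is essentially the paper's own argument: both apply Taylor's theorem with the Lagrange form of the remainder to $\sqrt{1+t}$ (the paper uses $f(t)=\sqrt{t+1}-1$, you use $g(t)=\sqrt{1+t}$; the remainders coincide), read off the sign $(-1)^n$ from the positive factor $(1+\theta)^{-(n+1/2)}$, and get the bound $\abs{P_{c,n}(\xi)}\leq \alpha_{n+1}\abs{\xi}^{2n+2}/(m^{2n+1}c^{2n})$ from $(1+\theta)^{-(n+1/2)}\leq 1$. Where you go beyond the paper is the ellipticity claim: the paper's proof never addresses the lower bound $\abs{P_{c,n}(\xi)}\geq C\abs{\xi}^{2n}$ for large $\xi$, whereas you supply a clean argument by noting that for $t\to\infty$ the subtracted polynomial's top term $(-1)^{n-1}\alpha_n t^n$ dominates both $\sqrt{1+t}=O(\sqrt{t})$ and the lower-order terms, so $R_n(t)/t^n\to(-1)^n\alpha_n$ and hence $\abs{R_n(t)}\geq\tfrac12\alpha_n t^n$ eventually. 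Your closing remark about why one must work with the finite Taylor polynomial plus remainder, rather than the power series (which diverges for $\abs{\xi}\geq mc$), is also exactly the point that makes the paper's global-in-$\xi$ estimates legitimate, even though the paper does not spell it out.
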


\begin{remark}
	Inequalities of the type $0 \leq P(\xi) \leq Q(\xi)$ between the 
	symbols of the differential operators $P(D)$ and $Q(D)$ will be 
	used to deduce that
	\begin{equation*}
		0 \leq \scalar{P(\xi) u}{u} \leq \scalar{Q(\xi) u}{u}
	\end{equation*}
	for all $u \in H^{s}$ (with $s$ sufficiently large). 
	
	We will use the inequalities of Lemma \ref{Lem:4.1} in the proofs
	of Lemma \ref{Lem:A2} and \ref{Lem:A3}.
\end{remark}

\begin{proof}
	Set
	\begin{equation*}
		f(t)= \sqrt{t + 1} - 1, \quad t \geq 0.
	\end{equation*}
    According to Taylor's formula
    \begin{align*}
		f(t) &= \sum_{j=0}^n \frac{f^{(j)}(0)}{j!} t^j +
		\frac{f^{(n+1)}(\tau)}{(n+1)!}t^{n+1}, \quad \tau\in [0, t] \\
		&= \sum_{j=1}^{n}{(-1)^{j-1}\alpha_j}t^{j} +
		{(-1)^{n} \alpha_{n+1}} t^{n+1}(\tau+1)^{-\frac{2n+1}{2}},
    \end{align*}
    and hence
	\begin{equation}
		\label{eq:taylor}
		\sqrt{\xi +1}-1-\sum_{j=1}^{n}{(-1)^{j-1}\alpha_j}\xi^{j}
		\quad
		\begin{cases}
			\leq 0  & \text{$n$ odd} \\
			\geq 0 & \text{$n$ even}
		\end{cases}.
	\end{equation}
    From this immediately we deduce
	\begin{equation}
		\label{eq:taylormc}
		P_{c, n}(\xi) = mc^2 \left( \sqrt{\frac{|\xi|^{2}}{m^{2}c^{2}}
		+ 1} - 1 - \sum_{j=1}^{n} {(-1)^{j-1}\alpha_j}
		\left(\frac{|\xi|^{2}}{m^{2}c^{2}} \right)^{j} \right) \
		\begin{cases}
			\ \leq 0  & \text{$n$  odd} \\
			\ \geq 0 & \text{$n$ even}
		\end{cases}.
	\end{equation}
	We also deduce that
	\begin{equation*}
		\left|P_{c,n}(\xi)\right| = mc^2\left| \sqrt{
		\frac{|\xi|^2}{m^2c^2} +1 }-1 -
		\sum_{j=1}^{n}(-1)^{j-1}\alpha_j \left( \frac{|\xi|^2}{m^2c^2}
		\right)^j \right|\leq
		\frac{C_{n}|\xi|^{2n+2}}{m^{2n+1}c^{2n}}.
    \end{equation*}
\end{proof}

\begin{lemma}\label{Lem:2.1}
	For $f\in H^{2n + 2 + s}(\mathbb{R }^3)$, $s\geq0$, there exists a
	constant $C_{n} >0$, independent of $c$ and $m$, such that
    \begin{equation*}
		\left\| P_{c,n}(D)f \right\|_{H^s}\leq
		\frac{C_{n}}{m^{2n+1}c^{2n}}\|f\|_{H^{2n+2+s}}.
    \end{equation*}
\end{lemma}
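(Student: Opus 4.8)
The plan is to pass to Fourier variables and reduce everything to the pointwise symbol bound \eqref{eq:PcnEstimate} already established in Lemma \ref{Lem:4.1}. Since $P_{c,n}(D)$ is by definition the Fourier multiplier with symbol $P_{c,n}(\xi)$, Plancherel's theorem lets me compute the $H^s$ norm of $P_{c,n}(D)f$ directly from the symbol, and the whole statement then collapses to an elementary comparison of multipliers weighted by $(1+\abs{\xi}^2)^s$.

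Concretely, I would first write, using the definition of the $H^s$ norm via the Fourier transform together with Plancherel,
\begin{equation*}
	\norm{P_{c,n}(D)f}_{H^s}^2 = \int_{\mathbb{R}^3} (1+\abs{\xi}^2)^s \, \abs{P_{c,n}(\xi)}^2 \, \abs{\mathcal{F}[f](\xi)}^2 \, d\xi.
\end{equation*}
Next I would insert the estimate $\abs{P_{c,n}(\xi)} \leq C_n \abs{\xi}^{2n+2} / (m^{2n+1}c^{2n})$ from \eqref{eq:PcnEstimate} and use the elementary inequality $\abs{\xi}^{4n+4} = (\abs{\xi}^2)^{2n+2} \leq (1+\abs{\xi}^2)^{2n+2}$ to obtain, for every $\xi$,
\begin{equation*}
	(1+\abs{\xi}^2)^s \, \abs{P_{c,n}(\xi)}^2 \leq \frac{C_n^2}{m^{4n+2}c^{4n}} \, (1+\abs{\xi}^2)^{s+2n+2}.
\end{equation*}
Substituting this into the integral and recognizing what remains as $\norm{f}_{H^{2n+2+s}}^2$ gives $\norm{P_{c,n}(D)f}_{H^s}^2 \leq C_n^2 m^{-(4n+2)} c^{-4n} \norm{f}_{H^{2n+2+s}}^2$; taking square roots yields the claim, with the constant inherited directly from Lemma \ref{Lem:4.1} and hence independent of $c$ and $m$ (a harmless Plancherel normalization factor, also independent of $c$ and $m$, is absorbed into $C_n$).

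I do not expect a genuine obstacle here: the content of the lemma is entirely carried by the symbol estimate \eqref{eq:PcnEstimate}. The only two points that deserve a word of care are, first, that the multiplier bound must hold for \emph{all} $\xi \in \mathbb{R}^3$ and not merely in the region $\abs{\xi} < mc$ where the Taylor series converges; this is guaranteed because the Lagrange remainder computation in the proof of Lemma \ref{Lem:4.1} is valid on the whole of $[0,\infty)$, where $(\tau+1)^{-(2n+1)/2} \leq 1$. Second, the hypothesis $f \in H^{2n+2+s}$ is precisely what makes the final integral finite, confirming that $P_{c,n}(D)$ indeed maps $H^{2n+2+s}$ into $H^s$ with the stated operator-norm decay in $c$.
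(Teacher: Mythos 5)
Your proof is correct and takes essentially the same route as the paper: both arguments compute the $H^s$ norm on the Fourier side via Plancherel, insert the pointwise symbol bound \eqref{eq:PcnEstimate} from Lemma \ref{Lem:4.1}, and absorb $\abs{\xi}^{2n+2}$ into the weight $(1+\abs{\xi}^2)^{(2n+2+s)/2}$ to recognize the $H^{2n+2+s}$ norm. Your additional remark that the symbol estimate holds for \emph{all} $\xi\in\mathbb{R}^3$ (because the Lagrange remainder computation is valid on all of $[0,\infty)$, not just where the Taylor series converges) is a point the paper leaves implicit, but it does not alter the argument.
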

\begin{proof}
	From \eqref{eq:PcnEstimate} we have
    \begin{multline*}
		\left\| P_{c,n}(D)f \right\|_{H^s} = \left\| (1+
		|\xi|^2)^{s/2}P_{c,n}(\xi)\mathcal{F}[f] \right\|_{L^2} \\
		\leq \frac{C_{n,m}}{c^{2n}} \left\|(|\xi|^{2n+2+s}+1)
		\mathcal{F}[f]\right\|_{L^2} \leq \frac{C_{n}}{m^{2n+1}c^{2n}}
		\|f\|_{H^{2n+2+s}}.
    \end{multline*}
\end{proof}

\begin{remark}\label{remark2.4}
	Lemma \ref{Lem:2.1} implies that $P_{c,n}(D)$, as a bounded linear
	operator from $H^{2n+2+s}$ to $H^s$, has its norm dominated by
	$c^{-2n}$, i.e.,
	\begin{equation*}
		\left\|P_c(D)-P_\infty(D)-\sum_{j=1}^{n-1}
		\frac{P_{\infty,j}(D)}{c^{2j}}
		\right\|_{\mathcal{L}(H^{2n+2+s}; H^s)} \lesssim
		\frac{1}{c^{2n}},
	\end{equation*}
	and it is easy to see that ${c^{-2n}}$ is the optimal bound.
\end{remark}

The following trilinear estimate in \cite[Lemma 3.2]{MR3734982} is of
fundamental importance for proving the higher regularity of the
positive solutions to equation \eqref{1.2}.
\begin{lemma}\label{Lem:2.2}
	For $s\geq \frac{1}{2}$, $u_1, u_2, u_3\in H^s(\mathbb{R}^3)$,
	there holds 
	\begin{equation*}
		\left\|\left(|x|^{-1} *\left(u_1 u_2\right)\right)
		u_3\right\|_{H^s\left(\mathbb{R}^3\right)} \lesssim
		\prod_{j=1}^3\left\|u_j\right\|_{H^s\left(\mathbb{R}^3\right)}.
	\end{equation*}
\end{lemma}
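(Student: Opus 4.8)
The plan is to reduce the estimate to a fractional Leibniz (Kato--Ponce) splitting and to exploit that convolution with $|x|^{-1}$ is, up to a constant, the operator $(-\Delta)^{-1}$: in $\mathbb{R}^{3}$ one has $\mathcal{F}[|x|^{-1}*h](\xi) = 4\pi|\xi|^{-2}\mathcal{F}[h](\xi)$, so $V:=|x|^{-1}*(u_1u_2) = 4\pi(-\Delta)^{-1}(u_1u_2)$ is the Riesz potential $I_2(u_1u_2)$ up to a constant and gains two derivatives. Writing $\|w\|_{H^s}\simeq\|\langle D\rangle^s w\|_{L^2}$, I would bound $\|\langle D\rangle^s(Vu_3)\|_{L^2}$ using three analytic inputs: the Kato--Ponce inequality, the Hardy--Littlewood--Sobolev (HLS) inequality for the Riesz potentials $I_\alpha$, and the Sobolev embeddings $H^s(\mathbb{R}^3)\hookrightarrow L^p$ for $2\le p\le \tfrac{6}{3-2s}$, the borderline case being the critical embedding $H^{1/2}\hookrightarrow L^3$.

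First I would apply Kato--Ponce to write $\|\langle D\rangle^s(Vu_3)\|_{L^2}\lesssim \|\langle D\rangle^s V\|_{L^{6}}\|u_3\|_{L^{3}} + \|V\|_{L^{\infty}}\|\langle D\rangle^s u_3\|_{L^{2}}$, the exponents being H\"older-admissible. In the first term $\|u_3\|_{L^3}\lesssim\|u_3\|_{H^s}$ for $s\ge \tfrac12$, while $\langle D\rangle^s V$ behaves at high frequencies like $I_{2-s}(u_1u_2)$ (and is even smoother at low frequencies). HLS then gives $\|I_{2-s}(u_1u_2)\|_{L^6}\lesssim \|u_1u_2\|_{L^r}$ with $\tfrac16=\tfrac1r-\tfrac{2-s}{3}$, i.e.\ $r=\tfrac32$ exactly when $s=\tfrac12$, and $\|u_1u_2\|_{L^{3/2}}\le\|u_1\|_{L^3}\|u_2\|_{L^3}\lesssim\|u_1\|_{H^s}\|u_2\|_{H^s}$. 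The arithmetic of these exponents is precisely what forces the hypothesis $s\ge\tfrac12$; the low-frequency part of $\langle D\rangle^s V$ is handled separately by Bernstein's inequality, since the multiplier $\langle\xi\rangle^s|\xi|^{-2}$ is locally integrable near $\xi=0$ in $\mathbb{R}^3$.

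The delicate term is the second one, which requires controlling $\|V\|_{L^\infty}=\||x|^{-1}*(u_1u_2)\|_{L^\infty}$ by $\|u_1\|_{H^s}\|u_2\|_{H^s}$. I would split the kernel as $|x|^{-1}=|x|^{-1}\mathbf 1_{|x|\le 1}+|x|^{-1}\mathbf 1_{|x|>1}$: the far part lies in every $L^b$ with $b>3$ and pairs by H\"older with $u_1u_2\in L^1$, while the near part lies in $L^a$ for $a<3$ and pairs with $u_1u_2\in L^{a'}$ for some $a'>\tfrac32$, available from $H^s\hookrightarrow L^{2a'}$ as soon as $s>\tfrac12$. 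This is exactly where the endpoint $s=\tfrac12$ is the main obstacle: there $u_1u_2$ only lies in $L^{3/2}$ and $I_2$ just fails to map into $L^\infty$ (the critical embedding $\dot W^{1,3}\hookrightarrow L^\infty$ fails). To close the endpoint I would replace the crude $L^\infty$ bound by a Littlewood--Paley/paraproduct decomposition of $Vu_3$, distributing the $s$ derivatives between the smoother factor $V$ and $u_3$ so that no factor is ever placed in $L^\infty$; the smoothing of $I_2$ then lets every frequency interaction be summed using only the critical $L^3$ information. Combining the pieces and using the symmetry in $u_1,u_2$ yields the stated trilinear estimate, uniformly for $s\ge\tfrac12$.
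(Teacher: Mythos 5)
First, a point of comparison: the paper does not prove this lemma at all --- it quotes it verbatim from \cite[Lemma 3.2]{MR3734982} --- so your proposal has to be judged on its own merits rather than against an in-paper argument. Your skeleton (Kato--Ponce with exponents $(6,3)$ and $(\infty,2)$, HLS for the differentiated factor, the critical embedding $H^{1/2}\hookrightarrow L^3$) is the standard and correct one, and your treatment of the first term is essentially right for $\tfrac12\le s<2$ (for $s\ge 2$ the reading ``high frequencies behave like $I_{2-s}$'' no longer makes sense, but that range is the easy one, since $H^s$ is an algebra for $s>\tfrac32$; also, the low-frequency piece should be bounded by Young's inequality with the kernel of $\chi(D)\langle D\rangle^s|D|^{-2}$, or by HLS $I_2\colon L^{6/5}\to L^6$, not by $L^2$-based Bernstein, since $|\xi|^{-2}\mathcal{F}[u_1u_2]$ is not square-integrable near $\xi=0$).

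The genuine gap is in the term you yourself single out as delicate, and your proposed repair does not work. In the low--high paraproduct $\sum_k S_{k-1}V\,\Delta_k u_3$ (with $V=|x|^{-1}*(u_1u_2)$), all $s$ derivatives necessarily land on $u_3$, which can absorb exactly $s$ derivatives in $L^2$ and no more; so this interaction forces a bound on $\sup_k\|S_{k-1}V\|_{L^\infty}$. But from ``critical $L^3$ information'' alone, i.e.\ from $u_1u_2\in L^{3/2}$, Bernstein gives only $\|\Delta_j V\|_{L^\infty}\lesssim 2^{-2j}\cdot 2^{2j}\|\Delta_j(u_1u_2)\|_{L^{3/2}}\lesssim\|u_1u_2\|_{L^{3/2}}$ per dyadic block, hence $\|S_{k-1}V\|_{L^\infty}\lesssim k\,\|u_1u_2\|_{L^{3/2}}$: a logarithmic divergence which no redistribution of derivatives removes --- it is the same obstruction as the failure of $I_2\colon L^{3/2}\to L^\infty$ that you note. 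What closes the endpoint is not finer frequency analysis but the bilinear structure of $V$, which your argument never exploits beyond H\"older on the product $u_1u_2$. By Cauchy--Schwarz,
\begin{equation*}
	|V(x)|\;\le\;\left(\int_{\mathbb{R}^3}\frac{|u_1(y)|^2}{|x-y|}\,dy\right)^{1/2}
	\left(\int_{\mathbb{R}^3}\frac{|u_2(y)|^2}{|x-y|}\,dy\right)^{1/2},
\end{equation*}
and Kato's inequality $\frac{1}{|x|}\le\frac{\pi}{2}\sqrt{-\Delta}$, applied after translating the singularity to $x$, gives $\int |u(y)|^2|x-y|^{-1}dy\le\frac{\pi}{2}\|u\|_{\dot H^{1/2}}^2$ uniformly in $x$. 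Hence $\|V\|_{L^\infty}\lesssim\|u_1\|_{H^{1/2}}\|u_2\|_{H^{1/2}}$, valid exactly at $s=\tfrac12$; this classical bound (it is the one underlying the Lieb--Yau and Lenzmann analyses of this equation) is what makes the $L^\infty$ term harmless. With it, your Kato--Ponce splitting closes for all $s\ge\tfrac12$, and the paraproduct detour should simply be deleted.
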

For convenience, we denote $\mathcal{N}^{(n)}$ as the $n$-th
derivative of nonlinear mapping
\begin{equation*}
	\mathcal{N} : H^s(\mathbb{R}^3)\to
	H^s(\mathbb{R}^3).
\end{equation*}
Then, direct computation shows
\begin{gather*}
	\mathcal{N}^{(1)}(u)[h_1] = {\left( |x|^{-1} * u^2 \right) h_1 + 2
	\left( |x|^{-1} * (u h_1) \right) u}, \\
	\mathcal{N}^{(2)}(u)[h_1, h_2] ={2 \left( |x|^{-1} * (h_1 h_2)
	\right) u + 2 \left( |x|^{-1} * (u h_1) \right) h_2 + 2 \left(
	|x|^{-1} * (u h_2) \right) h_1}, \\
	\mathcal{N}^{(3)}(u)[h_1, h_2, h_3] ={2
	\sum_{\substack{\text{cyclic} \\ i,j,k}} \left( |x|^{-1} * (h_i
	h_j) \right) h_k},
\end{gather*}
and $\mathcal{N}^{(n)}(u)=0$ for $n\geq 4$.
\begin{lemma}[Taylor's formula]\label{Lem:taylor}
    For $s\geq \frac{1}{2}$, $u, h\in H^s(\mathbb{R}^3)$, there holds
    \begin{equation}
		\mathcal{N}(u+ h) =\sum_{k=0}^n\frac{1}{k!}
		\mathcal{N}^{(k)}(u)[\underbrace{h,\cdots,h}_{k\,\,
		\text{times}}]+ \frac{1}{n!}\int_{0}^{1}(1-t)^{n}
		\mathcal{N}^{(n+1)}(u+ th)[\underbrace{h,\cdots,h}_{n+1\,\,
		\text{times}}]dt.
    \end{equation}
\end{lemma}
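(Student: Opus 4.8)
The plan is to treat $\mathcal{N}$ as a cubic homogeneous polynomial map on $H^s$ and to reduce the statement to the one–dimensional Taylor formula with integral remainder applied to a Banach-space-valued function. The first step is to record, using Lemma \ref{Lem:2.2}, that for each $k$ the symmetric $k$-linear expression $\mathcal{N}^{(k)}(w)[h_1,\dots,h_k]$ written above is a \emph{bounded} multilinear map from $(H^s)^{k}$ into $H^s$, and that $\mathcal{N}^{(k)}\equiv 0$ for $k\geq 4$. Indeed, every summand has the form $\bigl(|x|^{-1}*(\,\cdot\,\cdot\,)\bigr)\,\cdot$ with three arguments in $H^s$, so Lemma \ref{Lem:2.2} bounds its $H^s$-norm by the product of the three factor norms. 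This simultaneously guarantees that $\mathcal{N}$ is Fr\'echet differentiable to all orders on $H^s$ and that the displayed formulas are its derivatives.

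Next I would confirm the closed forms of the derivatives and obtain a terminating expansion by expanding $\mathcal{N}(u+h)$ directly. Writing $(u+h)^2 = u^2 + 2uh + h^2$ and distributing both the convolution and the final multiplication by $(u+h)$, one collects six terms and groups them according to their degree of homogeneity in $h$. A short computation shows that the parts of degree $0,1,2,3$ are exactly $\mathcal{N}(u)$, $\mathcal{N}^{(1)}(u)[h]$, $\tfrac12\mathcal{N}^{(2)}(u)[h,h]$ and $\tfrac16\mathcal{N}^{(3)}(u)[h,h,h]$, so that
\[
	\mathcal{N}(u+h) = \sum_{k=0}^{3}\frac{1}{k!}\,\mathcal{N}^{(k)}(u)[\,\underbrace{h,\dots,h}_{k}\,].
\]
This both verifies the stated formulas for $\mathcal{N}^{(k)}$ and shows that $\mathcal{N}(u+th)$ is an $H^s$-valued polynomial of degree $3$ in $t$.

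To pass to the integral-remainder form for arbitrary $n$, set $\phi(t):=\mathcal{N}(u+th)$ for $t\in[0,1]$, a map from $[0,1]$ into $H^s$. By the first step the relevant multilinear maps are bounded, so $\phi\in C^{\infty}([0,1];H^s)$ with $\phi^{(k)}(t)=\mathcal{N}^{(k)}(u+th)[h,\dots,h]$. The scalar Taylor formula with integral remainder,
\[
	\phi(1)=\sum_{k=0}^{n}\frac{\phi^{(k)}(0)}{k!}+\frac{1}{n!}\int_{0}^{1}(1-t)^{n}\,\phi^{(n+1)}(t)\,dt,
\]
holds verbatim for Banach-space-valued $\phi\in C^{n+1}([0,1];H^s)$: it follows from the fundamental theorem of calculus together with repeated integration by parts for Bochner integrals. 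Substituting $\phi^{(k)}(0)=\mathcal{N}^{(k)}(u)[h,\dots,h]$ and $\phi^{(n+1)}(t)=\mathcal{N}^{(n+1)}(u+th)[h,\dots,h]$ yields the claim.

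Because $\mathcal{N}$ is a polynomial of degree three, there is no genuine analytic difficulty here; the only point requiring care is the transfer in the last step of the one-dimensional formula to the $H^s$-valued setting, which is precisely where the boundedness supplied by Lemma \ref{Lem:2.2} is essential, ensuring $\phi\in C^{n+1}([0,1];H^s)$ and that the remainder is a well-defined Bochner integral. As a consistency check one notes that for $n\geq 3$ the remainder vanishes identically, since $\mathcal{N}^{(n+1)}=0$, so the formula collapses to the terminating expansion of the second step, while for $n=0,1,2$ the integral exactly recaptures the omitted higher-order terms.
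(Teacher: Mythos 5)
Your proof is correct. Note that the paper itself states this lemma without any proof, treating it as a standard fact that follows from the explicitly computed derivatives $\mathcal{N}^{(k)}$ and the vanishing $\mathcal{N}^{(k)}\equiv 0$ for $k\geq 4$; your argument—boundedness of the multilinear maps via Lemma \ref{Lem:2.2}, direct verification of the derivative formulas by expanding the cubic expression $\bigl(|x|^{-1}*(u+h)^2\bigr)(u+h)$, and the Banach-space-valued Taylor formula with Bochner integral remainder applied to $\phi(t)=\mathcal{N}(u+th)$—is exactly the standard justification the authors implicitly rely on, so it fills the gap faithfully rather than departing from the paper's approach.
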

By the Taylor's formula, the following nonlinear estimation can be
directly obtained.
\begin{lemma}
	\label{Lem:2.4}
    For $s\geq \frac{1}{2}$, $u, h\in H^s(\mathbb{R}^3)$, there holds
    \begin{equation}
		\left\| \mathcal{N}(u+ h) - \sum_{k=0}^n \frac{1}{k!}
		\mathcal{N}^{(k)}(u)[\underbrace{h,\cdots,h}_{k\,\,
		\text{times}}]\right\|_{H^s} \leq C\|h\|_{H^s}^{n+1},\quad
		n=1, 2,
    \end{equation}
    and 
    \begin{equation*}
		\mathcal{N}(u+ h) = \sum_{k=0}^3 \frac{1}{k!}
		\mathcal{N}^{(k)}(u)[\underbrace{h,\cdots,h}_{k\,\,
		\text{times}}].
    \end{equation*}
	where $C$ is a constant which is dependent on $s$, $n$, and
	$\max\limits_{t\in [0,1]}\|u+th\|_{H^s}$.
\end{lemma}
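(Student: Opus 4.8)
The plan is to invoke Taylor's formula with integral remainder (Lemma \ref{Lem:taylor}) and then control the remainder term-by-term using the trilinear estimate of Lemma \ref{Lem:2.2}. Both assertions ultimately follow from the single structural fact, already recorded before the statement, that $\mathcal{N}$ is cubic, so that $\mathcal{N}^{(k)} \equiv 0$ for $k \geq 4$.

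For the exact identity I would apply Lemma \ref{Lem:taylor} with $n = 3$. The integral remainder then involves $\mathcal{N}^{(4)}(u + th)$, which vanishes identically; hence the Taylor expansion to order three is exact. (Alternatively one expands $(|x|^{-1}*(u+h)^2)(u+h)$ directly and collects terms by homogeneity degree in $h$, but this route is shorter.)

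For the estimate with $n = 1, 2$, I would write the remainder via the integral form of Taylor's formula,
\[
\mathcal{N}(u+h) - \sum_{k=0}^n \frac{1}{k!}\mathcal{N}^{(k)}(u)[h,\ldots,h] = \frac{1}{n!}\int_0^1 (1-t)^n \mathcal{N}^{(n+1)}(u+th)[h,\ldots,h]\,dt,
\]
and estimate the $H^s$ norm by pulling it inside the integral. Reading off the explicit formulas for $\mathcal{N}^{(2)}$ and $\mathcal{N}^{(3)}$, each component of $\mathcal{N}^{(n+1)}(u+th)[h,\ldots,h]$ has the form $(|x|^{-1}*(ab))c$ with $a, b, c \in \{u+th,\ h\}$. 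Applying Lemma \ref{Lem:2.2} to each such term bounds it by a product of three $H^s$ norms. For $n=1$ the base point $u+th$ appears exactly once in every term, giving $\|\mathcal{N}^{(2)}(u+th)[h,h]\|_{H^s} \lesssim \|u+th\|_{H^s}\|h\|_{H^s}^2$; for $n=2$ the third derivative is constant in its base point, giving $\|\mathcal{N}^{(3)}(u+th)[h,h,h]\|_{H^s} \lesssim \|h\|_{H^s}^3$. Integrating over $t \in [0,1]$ and bounding $\|u+th\|_{H^s}$ by $\max_{t\in[0,1]}\|u+th\|_{H^s}$ yields the claimed estimate $C\|h\|_{H^s}^{n+1}$, with $C$ absorbing that maximum together with the combinatorial constants.

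There is no serious obstacle here: the lemma is essentially bookkeeping once Lemmas \ref{Lem:2.2} and \ref{Lem:taylor} are in hand. The only point requiring attention is tracking the dependence of $C$ on $\max_{t\in[0,1]}\|u+th\|_{H^s}$, which enters (for $n=1$) through the single factor of the base point carried by $\mathcal{N}^{(2)}$; for $n = 2$, and for the exact identity, no such dependence is needed, since $\mathcal{N}^{(3)}$ is constant and $\mathcal{N}^{(4)}$ vanishes.
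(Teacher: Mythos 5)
Your proof is correct and follows exactly the route the paper intends: the paper itself derives Lemma \ref{Lem:2.4} directly from the Taylor formula of Lemma \ref{Lem:taylor} combined with the trilinear estimate of Lemma \ref{Lem:2.2}, using that $\mathcal{N}^{(4)}\equiv 0$ for the exact cubic expansion. Your accounting of where the constant's dependence on $\max_{t\in[0,1]}\|u+th\|_{H^s}$ enters (only through $\mathcal{N}^{(2)}$ in the $n=1$ case) is accurate and fills in the bookkeeping the paper leaves implicit.
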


Similarly, for the functional $\mathcal{E}_\infty$, we have the
following expansion.
\begin{lemma}
	\label{Lem:2.4_new}
	For $s \geq \frac{1}{2}$, $u$, $h\in H^s(\mathbb{R}^3)$, there
	holds
    \begin{equation*}
		\mathcal{E}_\infty(u+ h) = \sum_{k=0}^4 \frac{1}{k!}
		d^k\mathcal{E}_\infty(u)[\underbrace{h,\cdots,h}_{k\,\,
		\text{times}}].
    \end{equation*}
\end{lemma}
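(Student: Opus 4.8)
The plan is to exploit that $\mathcal{E}_\infty$ is a \emph{polynomial} functional of degree at most four in its argument, so that its Taylor expansion must terminate at fourth order with vanishing remainder. Concretely, I would split
\[
	\mathcal{E}_\infty(u) = Q(u) - \tfrac{1}{2}\mathcal{H}(u), \qquad
	Q(u) := \frac{1}{2m}\norm{\nabla u}_{L^2}^2,
\]
and establish the exact fourth-order expansion for the quadratic part $Q$ and the quartic part $\mathcal{H}$ separately; since $d^{k}\mathcal{E}_\infty = d^{k}Q - \tfrac12 d^{k}\mathcal{H}$, the claim is additive in $k$.

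For $Q$, which is a bounded symmetric quadratic form (whenever the gradient pairings are finite), bilinearity of $\scalar{\nabla\,\cdot}{\nabla\,\cdot}$ gives at once
\[
	Q(u+h) = Q(u) + \frac{1}{m}\scalar{\nabla u}{\nabla h}
	+ \frac{1}{2m}\norm{\nabla h}_{L^2}^2,
\]
the three summands being $\frac{1}{k!}d^{k}Q(u)[h,\dots,h]$ for $k = 0,1,2$, while $d^{k}Q \equiv 0$ for $k \geq 3$. For $\mathcal{H}$ the cleanest route is to integrate the polynomial identity for $\mathcal{N}$ already recorded in the last display of Lemma \ref{Lem:2.4}. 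Since $\mathcal{H}$ is $C^{1}$ on $H^s$ by the trilinear estimate of Lemma \ref{Lem:2.2}, the chain rule together with $d\mathcal{H}(v)[w] = 4\scalar{\mathcal{N}(v)}{w}$ yields
\[
	\mathcal{H}(u+h) = \mathcal{H}(u) + 4\int_{0}^{1}
	\scalar{\mathcal{N}(u+th)}{h}\,dt.
\]
Substituting $\mathcal{N}(u+th) = \sum_{k=0}^{3}\frac{t^{k}}{k!}\mathcal{N}^{(k)}(u)[h,\dots,h]$ and using $\int_0^1 t^{k}\,dt = \frac{1}{k+1}$ would give
\[
	\mathcal{H}(u+h) = \mathcal{H}(u) + \sum_{k=0}^{3}
	\frac{4}{(k+1)!}\scalar{\mathcal{N}^{(k)}(u)[h,\dots,h]}{h}
	= \sum_{j=0}^{4}\frac{1}{j!}d^{j}\mathcal{H}(u)[h,\dots,h],
\]
the last equality using that differentiating $d\mathcal{H}(v)[w]=4\scalar{\mathcal{N}(v)}{w}$ a further $k$ times identifies $d^{\,k+1}\mathcal{H}(u)[h,\dots,h] = 4\scalar{\mathcal{N}^{(k)}(u)[h,\dots,h]}{h}$. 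Combining this with the expansion of $Q$ and the vanishing $d^{k}Q\equiv 0$ for $k\ge 3$ produces exactly $\mathcal{E}_\infty(u+h)=\sum_{j=0}^{4}\frac{1}{j!}d^{j}\mathcal{E}_\infty(u)[h,\dots,h]$.

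The one point needing care is the legitimacy of the term-by-term identifications: one must know that every multilinear form occurring is bounded on $H^{s}$, so that the $d^{j}\mathcal{E}_\infty(u)$ are genuine continuous symmetric multilinear maps and the $t$-integral commutes with the finite sum. For the quartic part this is precisely Lemma \ref{Lem:2.2} (equivalently Hardy--Littlewood--Sobolev) applied to each monomial arising from $(u+th)^2(x)\,(u+th)^2(y)$, and for the quadratic part it is the continuity of $\scalar{\nabla\,\cdot}{\nabla\,\cdot}$; no estimate beyond those already in this section is needed. The conceptual engine is just the relation $\mathcal{N}^{(n)}\equiv 0$ for $n\ge 4$ recorded before Lemma \ref{Lem:2.4}, which forces $d^{n}\mathcal{E}_\infty\equiv 0$ for $n\ge 5$; equivalently, one may run the whole argument through Taylor's formula with integral remainder for the $C^{5}$ functional $\mathcal{E}_\infty$, whose fifth-order remainder vanishes identically. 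I expect this bookkeeping of derivatives, rather than any analytic difficulty, to be the only real obstacle.
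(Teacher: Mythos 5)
Your proof is correct. The paper in fact gives no proof of this lemma at all --- it is stated immediately after the exact expansion of $\mathcal{N}$ with only the word ``Similarly'' --- and your argument (splitting $\mathcal{E}_\infty$ into its quadratic and quartic parts, expanding the quadratic form exactly, and recovering the exact quartic expansion of $\mathcal{H}$ by integrating the terminating polynomial identity for $\mathcal{N}$ via $d\mathcal{H}(v)[w]=4\langle \mathcal{N}(v)\mid w\rangle$) is precisely the rigorous version of the intended justification, namely that a quadratic-plus-quartic functional has identically vanishing derivatives of order five and higher.
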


We recall the following convergence result
\begin{lemma}[{\cite[Lemma 4.1]{MR3148610}}]\label{lemm:2.5}
	Let $v\in H^{1/2}(\mathbb{R}^3)$, $f_n, g_n, h_n$ bounded
	sequences in $H^{1/2}(\mathbb{R}^3)$, and one of them converges
	weakly to zero in $H^{1/2}(\mathbb{R}^3)$, then
    \begin{equation*}
		\int_{\mathbb{R}^3\times\mathbb{R}^3}
		\frac{|f_n|(x)|g_n|(x)|v|(y)|h_n|(y)}{|x-y|} \, dxdy \to 0,
		\quad \text{as} \quad n\to+\infty.
    \end{equation*}
\end{lemma}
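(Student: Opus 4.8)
The plan is to reduce the double integral to a bilinear Coulomb form and to control it with two ingredients: the Hardy--Littlewood--Sobolev (HLS) inequality together with the Sobolev embedding $H^{1/2}(\mathbb{R}^3)\hookrightarrow L^q(\mathbb{R}^3)$ for $q\in[2,3]$, and the \emph{local} compactness of $H^{1/2}(\mathbb{R}^3)\hookrightarrow L^q_{\mathrm{loc}}(\mathbb{R}^3)$ for $q<3$. Writing $F_n=|f_n|\,|g_n|$ and $G_n=|v|\,|h_n|$ (both nonnegative), the integral equals $\int_{\mathbb{R}^3\times\mathbb{R}^3}\frac{F_n(x)G_n(y)}{|x-y|}\,dx\,dy$. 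By Hölder and the embedding, $\|F_n\|_{L^{6/5}}\le\|f_n\|_{L^{12/5}}\|g_n\|_{L^{12/5}}$ and likewise for $G_n$, so both are bounded in $L^{6/5}$; HLS then gives the a priori bound $I_n\lesssim\|F_n\|_{L^{6/5}}\|G_n\|_{L^{6/5}}\lesssim 1$. The task is to upgrade boundedness to convergence to $0$ using the weak-null hypothesis, and the argument splits according to whether the vanishing sequence sits in the $x$-slot ($f_n$ or $g_n$) or in the $y$-slot ($h_n$).

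First suppose $h_n\rightharpoonup0$ in $H^{1/2}$. I would show directly that $G_n=|v|\,|h_n|\to0$ strongly in $L^{6/5}$. Splitting $\|G_n\|_{L^{6/5}}$ over a ball $B_R$ and its complement: on $B_R$ one has $\|G_n\|_{L^{6/5}(B_R)}\le\|v\|_{L^{12/5}}\|h_n\|_{L^{12/5}(B_R)}$, and the compact embedding forces $\|h_n\|_{L^{12/5}(B_R)}\to0$; on $B_R^c$ one has $\|G_n\|_{L^{6/5}(B_R^c)}\le\|v\|_{L^{12/5}(B_R^c)}\|h_n\|_{L^{12/5}}$, small for $R$ large uniformly in $n$ since the fixed $v\in L^{12/5}$ has vanishing tail. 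Hence $\|G_n\|_{L^{6/5}}\to0$, and the HLS bound gives $I_n\to0$ using only the boundedness of $f_n,g_n$.

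The substantive case is when the weak-null sequence lies in the $x$-slot, say $f_n\rightharpoonup0$ (the case of $g_n$ being symmetric). Here $\|F_n\|_{L^{6/5}}$ need \emph{not} tend to $0$: for translating bumps $f_n=g_n=\phi(\cdot-ne_1)$ one has $f_n\rightharpoonup0$ yet $\|F_n\|_{L^{6/5}}\not\to0$. The operative mechanism is instead the decay of the Coulomb interaction between the escaping $x$-mass and the $y$-mass anchored by the fixed $v$. I would write $I_n=\int F_n\,V_n\,dx$ with $V_n:=|x|^{-1}*G_n$, note $\|V_n\|_{L^6}\lesssim\|G_n\|_{L^{6/5}}\lesssim1$ by the weak Young inequality (since $|x|^{-1}\in L^{3,\infty}$), and split over $x\in B_R$ and $x\in B_R^c$. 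On $B_R$, Hölder gives $\int_{B_R}F_nV_n\le\|F_n\|_{L^{6/5}(B_R)}\|V_n\|_{L^6}$, and $\|F_n\|_{L^{6/5}(B_R)}\le\|f_n\|_{L^{12/5}(B_R)}\|g_n\|_{L^{12/5}}\to0$ by local compactness applied to $f_n$. The main obstacle is the tail term $\int_{B_R^c}F_nV_n\le\|F_n\|_{L^{6/5}}\|V_n\|_{L^6(B_R^c)}$, for which I must establish $\sup_n\|V_n\|_{L^6(B_R^c)}\to0$ as $R\to\infty$. This follows from the anchoring of $G_n$ by $v$: splitting $G_n=G_n\mathbf{1}_{B_S}+G_n\mathbf{1}_{B_S^c}$, the far part contributes $\lesssim\|v\mathbf{1}_{B_S^c}\|_{L^{12/5}}\|h_n\|_{L^{12/5}}$, small for $S$ large uniformly in $n$, while the near part is the Coulomb potential of a charge of uniformly bounded mass ($\int_{B_S}G_n\le\|v\|_{L^2}\|h_n\|_{L^2}\lesssim1$) supported in $B_S$, hence is $\le C_S/|x|$ for $|x|>R>2S$, giving $\|\cdot\|_{L^6(B_R^c)}\lesssim R^{-1/2}$. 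Choosing first $R$ to control the tail uniformly in $n$, then letting $n\to\infty$ to kill the local term, yields $\limsup_n I_n$ arbitrarily small, i.e. $I_n\to0$.

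I expect the crux to be exactly this uniform tail estimate in the $x$-slot case, where the fixed (non-sequential) nature of $v$ is essential: it anchors the $y$-integral near the origin, so that any escaping $x$-mass can interact with it only through the long-range, decaying part of the Coulomb kernel.
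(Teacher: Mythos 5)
Your proof is correct, but note that the paper itself offers no argument to compare against: Lemma \ref{lemm:2.5} is quoted verbatim from \cite[Lemma 4.1]{MR3148610} (Coti Zelati--Nolasco) and is used as a black box, so any proof here is necessarily ``different from the paper's.'' On its own merits, your argument is complete and self-contained. The a priori setup (H\"older giving $\|F_n\|_{L^{6/5}}\le\|f_n\|_{L^{12/5}}\|g_n\|_{L^{12/5}}$, the embedding $H^{1/2}(\mathbb{R}^3)\hookrightarrow L^q$ for $2\le q\le 3$, and HLS with $\tfrac56+\tfrac56+\tfrac13=2$) is right, and you correctly identify that the lemma splits into two genuinely different cases. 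The case $h_n\rightharpoonup 0$ is the easy one, since $G_n=|v|\,|h_n|$ really does converge strongly in $L^{6/5}$ (local compactness on $B_R$ plus the tail of the fixed $v$ on $B_R^c$). Your observation that this mechanism fails when the weakly null sequence sits in the $x$-slot --- translating bumps keep $\|F_n\|_{L^{6/5}}$ bounded away from zero --- is exactly the point a careless proof would miss, and your replacement argument is sound: the potential $V_n=|x|^{-1}*G_n$ is uniformly small in $L^6(B_R^c)$ because the far part of $G_n$ is controlled by the tail of $v$ (uniformly in $n$), while the near part has uniformly bounded mass supported in $B_S$, so its potential decays like $|x|^{-1}$ and contributes $O(R^{-1/2})$ in $L^6(B_R^c)$. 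The quantifier order (first $S$, then $R>2S$, then $n\to\infty$ to kill the local term via compactness applied to $f_n$) is handled correctly, and the $g_n$ case follows by symmetry of $F_n$. This is a legitimate alternative to simply citing the literature, and arguably clarifies why the fixed function $v$ is indispensable in the statement.
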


\begin{lemma}\label{lemm:2.6_new}
   For $s\geq 0$, let $v\in H^{s+2}(\mathbb{R}^3)$, $h_n, g_n$ bounded
   sequences in $H^{s+2}(\mathbb{R}^3)$, and one of them converges
   weakly to zero in $H^{s+2}(\mathbb{R}^3)$, then
   \begin{equation*}
		\left\|(|x|^{-1}*vh_n)g_n \right\|_{H^s}\to
		0,\quad\mathrm{as}\quad n\to+\infty.
	\end{equation*}
\end{lemma}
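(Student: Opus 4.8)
The plan is to reduce the general statement to the case $s=0$ by differentiating the product, and to settle the case $s=0$ — which is a strong-norm refinement of the bilinear statement of Lemma \ref{lemm:2.5} — by a Rellich-type local compactness argument combined with the decay of the fixed factor $v$ at spatial infinity. Throughout I write $w_n := |x|^{-1}*(vh_n)$, so the quantity to control is $\|w_n g_n\|_{H^s}$.

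I first treat $s=0$. Since $v\in H^2\hookrightarrow L^2\cap L^\infty$ and $h_n,g_n$ are bounded in $H^2\hookrightarrow L^\infty$, the products $vh_n$ are bounded in $L^1\cap L^2$ (Cauchy–Schwarz and H\"older). The elementary kernel splitting $|x|^{-1}=|x|^{-1}\mathbf 1_{|x|<1}+|x|^{-1}\mathbf 1_{|x|\ge 1}$, together with $|x|^{-1}\mathbf 1_{|x|<1}\in L^2(\mathbb{R}^3)$, yields the bound $\|w_n\|_{L^\infty}\lesssim \|vh_n\|_{L^1}+\|vh_n\|_{L^2}$. More precisely, splitting the $y$-integral into $\{|y|\le R/2\}$ (where, for $|x|>R$, the kernel is $\le 2/R$) and $\{|y|>R/2\}$, one obtains a uniform-in-$n$ decay estimate $\sup_n \|w_n\|_{L^\infty(\{|x|>R\})}\to 0$ as $R\to\infty$, coming from $\|vh_n\|_{L^1(\{|x|>R/2\})}+\|vh_n\|_{L^2(\{|x|>R/2\})}\to 0$ uniformly in $n$ (here I use $v\in L^2$ and the uniform $L^2$, $L^\infty$ bounds on $h_n$).

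Now the two subcases of $s=0$. If $h_n\rightharpoonup 0$, I would first show $vh_n\to 0$ strongly in $L^1\cap L^2$: on a ball $B_R$ the compact embedding $H^2(B_R)\hookrightarrow\hookrightarrow C(\overline{B_R})$ gives $\|h_n\|_{L^\infty(B_R)}\to 0$, while on $\{|x|>R\}$ one bounds $vh_n$ by $\|v\|_{L^2(\{|x|>R\})}$ times the uniform bounds on $h_n$; sending $n\to\infty$ then $R\to\infty$ gives the claim. Hence $\|w_n\|_{L^\infty}\to 0$ and $\|w_n g_n\|_{L^2}\le \|w_n\|_{L^\infty}\|g_n\|_{L^2}\to 0$. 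If instead $g_n\rightharpoonup 0$, then $w_n$ is only bounded, so I split $\|w_n g_n\|_{L^2}$ over $B_R$ and its complement: on $B_R$ I use $\|w_n\|_{L^\infty}\le C$ together with $\|g_n\|_{L^2(B_R)}\to 0$ (Rellich), and on $\{|x|>R\}$ I use the uniform decay $\sup_n\|w_n\|_{L^\infty(\{|x|>R\})}\to 0$ together with the uniform bound on $\|g_n\|_{L^2}$. This settles $s=0$ in both situations.

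For integer $s$ I would expand by the Leibniz rule: since derivatives commute with the convolution, $\partial^\alpha(w_n g_n)$ is a finite sum of terms $\bigl(|x|^{-1}*(\partial^\gamma v\,\partial^\delta h_n)\bigr)\,\partial^\mu g_n$ with $|\gamma|+|\delta|+|\mu|=|\alpha|\le s$. Each $\partial^\gamma v$ lies in $H^{s+2-|\gamma|}\subseteq H^2$ (this is exactly why $v\in H^{s+2}$ is assumed), each $\partial^\delta h_n$ and $\partial^\mu g_n$ is bounded in $H^2$, and the weak-null hypothesis is inherited by the appropriate factor ($\partial^\delta h_n\rightharpoonup 0$ if $h_n\rightharpoonup 0$, resp. $\partial^\mu g_n\rightharpoonup 0$ if $g_n\rightharpoonup 0$). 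So every term has the form already handled at $s=0$ and tends to $0$ in $L^2$. The general real $s\ge 0$ then follows from the fractional Leibniz (Kato–Ponce) rule, which reduces $\|\langle D\rangle^s(w_n g_n)\|_{L^2}$ to $L^p$-versions of the same trilinear quantities, controlled as above using the product bound of Lemma \ref{Lem:2.2}. The main obstacle is the absence of global compactness on $\mathbb{R}^3$: weak convergence in $H^{s+2}$ does not upgrade to strong convergence because mass can escape to spatial infinity. The entire argument hinges on exploiting the decay of the fixed function $v$ — directly when $h_n\rightharpoonup 0$, and through the uniform-in-$n$ decay of the Riesz potential $w_n$ when $g_n\rightharpoonup 0$ — to kill the tails, with Rellich compactness handling the bounded region. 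The case $g_n\rightharpoonup 0$ is the more delicate one, precisely because there $w_n$ is itself a moving sequence and one must establish decay estimates on $w_n$ that are uniform in $n$.
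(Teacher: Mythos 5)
Your argument for $s=0$ and for integer $s$ is correct, and it takes a genuinely different route from the paper's. The paper never works with the Riesz potential pointwise: writing $f_n = (|x|^{-1}*(vh_n))g_n$, it bounds $\|f_n\|_{L^2}^2$ by the quadrilinear form $\int\int |f_n(x)|\,|g_n(x)|\,|v(y)|\,|h_n(y)|\,|x-y|^{-1}\,dx\,dy$ and invokes the convergence result of Lemma \ref{lemm:2.5} (quoted from \cite{MR3148610}); higher derivatives are treated by the same Leibniz expansion you use, again fed into Lemma \ref{lemm:2.5}, and non-integer $s$ is handled by choosing an integer $m$ with $s \leq m \leq s+1$ and using $\|f_n\|_{H^s}\leq\|f_n\|_{H^m}$ --- affordable there because Lemma \ref{lemm:2.5} needs only $H^{1/2}$ bounds on the factors, so one may spend up to $s+1$ derivatives. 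Your route (uniform spatial decay of $w_n=|x|^{-1}*(vh_n)$ via kernel splitting, Rellich compactness on balls, and the case distinction $h_n\rightharpoonup 0$ versus $g_n\rightharpoonup 0$) is self-contained and avoids the external lemma; its price is that your $s=0$ building block needs $L^\infty$ control, i.e.\ $H^2$ data, so in the Leibniz expansion you can only afford $\lfloor s\rfloor\leq s$ derivatives.

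That is exactly where the gap sits: your treatment of non-integer $s$ by Kato--Ponce does not work as stated, and it fails precisely in the case you yourself flag as delicate, $g_n\rightharpoonup 0$. The fractional Leibniz rule only yields an upper bound of $\|(1-\Delta)^{s/2}(w_ng_n)\|_{L^2}$ by sums of products such as $\|(1-\Delta)^{s/2}w_n\|_{L^{p}}\|g_n\|_{L^{q}}$ and $\|w_n\|_{L^\infty}\|(1-\Delta)^{s/2}g_n\|_{L^2}$; when $g_n\rightharpoonup 0$ (with $h_n$, say, fixed) every one of these factors is merely bounded and none tends to zero, so the inequality gives no decay. The mechanism that rescues $s=0$ --- splitting space into $B_R$ (Rellich) and its complement (decay of $w_n$) --- does not commute with the nonlocal operator $(1-\Delta)^{s/2}$, so it cannot be run after applying Kato--Ponce. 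And this case cannot be discarded: the paper applies the lemma in Lemma \ref{lemm:3.1_new} with $g_n=u_n\rightharpoonup 0$ and $h_n$ fixed, for arbitrary $s\geq 0$. Fortunately the repair is one line and stays inside your framework: by Lemma \ref{Lem:2.2} the products $f_n=w_ng_n$ are bounded in $H^{s+2}$, your integer-order argument gives $\|f_n\|_{H^{\lfloor s\rfloor}}\to 0$, and interpolation
\begin{equation*}
	\|f_n\|_{H^s}\leq \|f_n\|_{H^{\lfloor s\rfloor}}^{\theta}\,\|f_n\|_{H^{s+2}}^{1-\theta},
	\qquad \theta=\frac{2}{2+s-\lfloor s\rfloor}\in(0,1],
\end{equation*}
gives the conclusion for all real $s\geq 0$. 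With that replacement for your last step, the proof is complete.
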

\begin{proof}
	According to Lemma \ref{Lem:2.2}, the sequence $f_n=
	(|x|^{-1}*vh_n)g_n$ is bounded in $H^{s + 2}(\mathbb{R}^3)$. Then,
	by Lemma \ref{lemm:2.5}, we have
    \begin{equation}\label{new_1}
			\left\|f_n \right\|_{L^2}^2 \leq
			\int_{\mathbb{R}^3\times\mathbb{R}^3}
			\frac{|f_n(x)|\cdot|g_n(x)|\cdot|v(y)|\cdot|h_n(y)|}{|x -
			y|} \, dxdy \to 0 \quad \text{as} \quad n\to+\infty.
    \end{equation}
	There exists an integer $m$ such that $s\leq m < m + 1 \leq s +
	2$. If $m$ is even, then by Leibniz's formula for weak
	derivatives, $(-\Delta)^{\frac{m}{2}}f_n$ can be expressed as a
	linear combination of terms of the form
	$\left((|x|^{-1}*(\partial^\alpha v\partial^\beta
	h_n)\right)\partial^\gamma g_n$, where the multi - indices
	$\alpha, \beta, \gamma$ satisfy $|\alpha|+ |\beta| + |\gamma|=m$.
	By the given assumption, either $\partial^\beta h_n$ or
	$\partial^\gamma g_n$ converges weakly to zero in
	$H^{\frac{1}{2}}(\mathbb{R}^3)$. Then, applying Lemma
	\ref{lemm:2.5}, we obtain
    \begin{multline}\label{new_2}
        \|(-\Delta)^{\frac{m}{2}}f_n\|_{L^2}^2 \\
		\leq \sum_{|\alpha|+ |\beta|+
		|\gamma|=m}\int_{\mathbb{R}^3\times\mathbb{R}^3}
		\frac{|\Delta^{\frac{m}{2}}f_n(x)|\cdot|\partial^\gamma
		g_n(x)|\cdot|\partial^\alpha v(y)|\cdot|\partial^\beta
		h_n(y)|}{|x - y|} \, dxdy \to 0.
    \end{multline}
    If $m$ is odd, repeating the above argument, we get
    \begin{multline}\label{new_3}
		\|(-\Delta)^{\frac{m}{2}}f_n\|_{L^2}^2 =
		\scalar{(-\Delta)^{\frac{m - 1}{2}}f_n}
		{(-\Delta)^{\frac{m+1}{2}}f_n} \\
		\leq \sum_{|\alpha|+ |\beta|+ |\gamma|=m -
		1}\int_{\mathbb{R}^3\times\mathbb{R}^3}
		\frac{|(-\Delta)^{\frac{m+1}{2}}f_n(x)|\cdot|\partial^\gamma
		g_n(x)|\cdot|\partial^\alpha v(y)|\cdot|\partial^\beta
		h_n(y)|}{|x - y|} \, dxdy \to 0.
    \end{multline}
	By combining \eqref{new_1}, \eqref{new_2} and \eqref{new_3}, we
	conclude that
	\begin{equation*}
		\|f_n\|_{H^s} \leq \|f_n\|_{H^m}\to0 \quad\text{as}\quad
		n\to+\infty.
	\end{equation*}
\end{proof}

We recall the following Hardy-Littlewood-Sobolev inequality, see
\cite[Theorem 4.3]{liebloss}.

\begin{lemma}\label{hls}
	Assume that $f \in L^p(\mathbb{R}^3)$ and $g \in
	L^q(\mathbb{R}^3)$. Then one has
	\begin{equation*}
		\int_{\mathbb{R}^3} \int_{\mathbb{R}^3}
		\frac{f(x)g(y)}{|x-y|^t} \, dx \, dy \leq c(p, q, t) \|f\|_p
		\|g\|_q,
	\end{equation*}
	where $1 < p, q < \infty$, $0 < t < 3$ and $\frac{1}{p} +
	\frac{1}{q} + \frac{t}{3} = 2$.
 \end{lemma}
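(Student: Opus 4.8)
The plan is to deduce this from the weak-type Young inequality for convolutions, viewing the bilinear form as a pairing between $g$ and the Riesz potential $K*f$, where $K(x) = |x|^{-t}$. First I would reduce to the case $f, g \geq 0$: replacing $f$ by $|f|$ and $g$ by $|g|$ only increases the integral, since the kernel is positive, while leaving $\|f\|_p$ and $\|g\|_q$ unchanged. Hence it suffices to bound $\int_{\mathbb{R}^3} g(y)\,(K*f)(y)\,dy$ from above. Rewriting the double integral as this pairing is the conceptual step that converts the estimate into a mapping property of the convolution operator $f \mapsto K*f$.

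The key observation is that $K(x) = |x|^{-t}$ lies in the weak Lebesgue space $L^{r,\infty}(\mathbb{R}^3)$ with $r = 3/t$: the super-level set $\{\,x : |x|^{-t} > \lambda\,\}$ is the ball of radius $\lambda^{-1/t}$, whose volume is a fixed constant times $\lambda^{-3/t}$, so $\sup_{\lambda>0} \lambda\,\bigl|\{K > \lambda\}\bigr|^{1/r} < \infty$; the hypothesis $0 < t < 3$ ensures $1 < r < \infty$. I would then invoke the weak Young inequality: convolution with a function in $L^{r,\infty}$ maps $L^p$ boundedly into $L^s$ whenever $1 < p, s < \infty$ and $\tfrac1s = \tfrac1p + \tfrac1r - 1$, with $\|K*f\|_s \leq C\,\|K\|_{L^{r,\infty}}\,\|f\|_p$. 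The exponent arithmetic matches exactly: taking $s = q'$ and using $\tfrac1r = \tfrac{t}{3}$, the relation $\tfrac1s = \tfrac1p + \tfrac{t}{3} - 1$ is equivalent to $\tfrac1p + \tfrac1q + \tfrac{t}{3} = 2$, and the constraints $1 < p, q < \infty$, $0 < t < 3$ force $0 < \tfrac1s < 1$, i.e. $s \in (1,\infty)$, so the inequality applies. A final application of Hölder's inequality with the conjugate pair $(q, q')$ gives $\int g\,(K*f) \leq \|g\|_q\,\|K*f\|_{q'} \leq C\,\|f\|_p\,\|g\|_q$, which is the claim, with $c(p,q,t) = C\,\bigl\|\,|x|^{-t}\,\bigr\|_{L^{3/t,\infty}}$.

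The main obstacle is the weak Young inequality itself, which is where the real work sits. I would prove it by splitting and interpolation. Normalizing $\|f\|_p = 1$ and fixing a height $\alpha > 0$, decompose $K = K\mathbf{1}_{\{K \leq B\}} + K\mathbf{1}_{\{K > B\}}$ at a cutoff $B = B(\alpha)$. The low-value part lies in $L^{p'}$ (its integrability at infinity uses $p' > r$, which follows from $q > 1$) and contributes, via Hölder, an $L^\infty$ bound $\|K\mathbf{1}_{\{K\le B\}} * f\|_\infty \leq \|K\mathbf{1}_{\{K\le B\}}\|_{p'}$, made $\leq \alpha/2$ by the choice of $B$; the high-value part lies in $L^1$ (its integrability near the origin uses $t < 3$ and $r > 1$) and, after $\|K\mathbf{1}_{\{K>B\}} * f\|_p \leq \|K\mathbf{1}_{\{K>B\}}\|_1\,\|f\|_p$ together with Chebyshev's inequality, controls the distribution function of $K*f$ at level $\alpha$. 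Computing $\|K\mathbf{1}_{\{K\le B\}}\|_{p'}$ and $\|K\mathbf{1}_{\{K>B\}}\|_1$ in terms of $B$ from the weak-$L^r$ bound and eliminating $B$ yields $\bigl|\{|K*f| > \alpha\}\bigr| \lesssim \alpha^{-s}$, i.e. the weak-type $(p,s)$ estimate, which the Marcinkiewicz interpolation theorem upgrades to the strong-type bound $L^p \to L^s$; tracking the dependence on $\|K\|_{L^{r,\infty}}$ throughout produces the constant.

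Alternatively, one may bypass the weak Young inequality and follow the rearrangement route of Lieb and Loss \cite{liebloss}: the Riesz rearrangement inequality reduces the form to radially symmetric, nonincreasing $f$ and $g$, after which a layer-cake decomposition combined with a scaling argument gives the bound directly. This has the advantage of producing the sharp constant, which is more than we need here; for our purposes the interpolation argument above is the more economical route.
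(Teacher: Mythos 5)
Your argument is correct, but it is worth noting that the paper does not prove this lemma at all: it is recalled as a classical result with a pointer to Lieb and Loss \cite{liebloss}, Theorem 4.3, so there is no internal proof to compare against. What you have written is the standard textbook derivation of the (non-sharp) Hardy--Littlewood--Sobolev inequality: the exponent bookkeeping checks out ($K=|x|^{-t}\in L^{3/t,\infty}$, $s=q'$ turns $\tfrac1s=\tfrac1p+\tfrac{t}{3}-1$ into $\tfrac1p+\tfrac1q+\tfrac{t}{3}=2$, and $q>1$ is exactly what makes $p'>r=3/t$ so the low part of the kernel is $p'$-integrable at infinity), the splitting-plus-Chebyshev argument does yield the weak-type $(p,s)$ bound, and since the weak estimate holds on an open range of exponent pairs around any admissible $(p,s)$, Marcinkiewicz interpolation legitimately upgrades it to the strong bound for the linear operator $f\mapsto K*f$. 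The route in the cited reference is the rearrangement one you mention in your last paragraph: Riesz rearrangement reduces to symmetric-decreasing $f,g$, and a layer-cake plus scaling argument finishes, with the bonus of a sharp constant (sharpened further by Lieb's later work). For the purposes of this paper --- where the lemma is used only qualitatively, e.g.\ to show certain Hartree terms vanish along weakly convergent sequences --- either proof suffices, and your interpolation argument is indeed the more economical self-contained option; the only caveat is that the constant it produces degenerates as the exponents approach the endpoints $p=1$ or $q=1$, which is harmless here since $p,q,t$ are fixed.
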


We will use some of the properties of the non-relativistic action and
energy ground states in proving the asymptotic behavior of the action
and energy ground states. In particular we recall that the unique
positive radial action ground state $\uai$ to \eqref{1.3} is
nondegenerate in $H^{1}_r(\mathbb{R}^3)$ (see \cite[Proposition
2]{MR2561169} and \cite[Theorem III.1]{zbMATH05643114}), that means
the following linearized equation has only the trivial solution $u=0$
in $H^{1}_r(\mathbb{R}^3)$,
\begin{align*}
	\La u: & = \left(P_\infty(D)+ \lambda \right)u-
	\mathcal{N}^{(1)}(\uai)u \\
	& = -\frac{1}{2m}\Delta u +\lambda u-
	\left(|x|^{-1}*\uai^2\right)u-2\left(|x|^{-1}*(\uai
	u)\right)\uai.
\end{align*}
Let us recall that existence and uniqueness of the positive radial
solution goes back to \cite{zbMATH03576003}, see also
\cite{zbMATH01340074}. The precise knowledge of $\ker \La \bigcap
H^{1}_r(\mathbb{R}^3)$ implies $\La \colon H_r^{s+2}(\mathbb{R}^3)\to
H_r^s(\mathbb{R}^3)$ is invertible. To prove it we start by showing:
\begin{lemma}
	\label{lemm:3.1_new}
	For all $s \geq 0$ the linear operator $\mathcal{N}^{(1)}(\uai)
	\colon H_{r}^{s+2} \to H_{r}^s$ is compact.
\end{lemma}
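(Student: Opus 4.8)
The plan is to work directly from the explicit formula recorded above,
\[
	\mathcal{N}^{(1)}(\uai)[h] = \left( |x|^{-1} * \uai^2 \right) h + 2 \left( |x|^{-1} * (\uai h) \right) \uai ,
\]
together with the fact that $\uai$ is a (smooth) ground state, so that $\uai \in \bigcap_{s>0} H_r^s$ and in particular $\uai \in H_r^{s+2}$ for every $s \geq 0$. Since convolution with the radial kernel $|x|^{-1}$ and multiplication both preserve radial symmetry, $\mathcal{N}^{(1)}(\uai)$ maps radial functions to radial functions, so it is indeed an operator $H_r^{s+2} \to H_r^s$.

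First I would record boundedness. Writing the first term as $\left( |x|^{-1} * (\uai \cdot \uai) \right) h$ and applying the trilinear estimate of Lemma \ref{Lem:2.2} at regularity $s+2 \geq 2 > \tfrac12$ (and similarly for the second term), one gets
\[
	\left\| \mathcal{N}^{(1)}(\uai) h \right\|_{H^{s+2}} \lesssim \|\uai\|_{H^{s+2}}^2 \, \|h\|_{H^{s+2}} ,
\]
so that $\mathcal{N}^{(1)}(\uai) \colon H_r^{s+2} \to H_r^s$ is a bounded linear operator. I would emphasize that the embedding $H^{s+2} \hookrightarrow H^s$ is \emph{not} compact on $\mathbb{R}^3$, so compactness cannot be read off from this estimate and must instead come from the convolution structure of the Hartree nonlinearity.

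The heart of the argument is a weak-to-strong convergence criterion. Let $\{h_n\}$ be bounded in $H_r^{s+2}$; by reflexivity I pass to a subsequence with $h_n \rightharpoonup h$ weakly in $H_r^{s+2}$ and set $w_n := h_n - h \rightharpoonup 0$. By linearity it suffices to show $\mathcal{N}^{(1)}(\uai) w_n \to 0$ strongly in $H_r^s$. I would then apply Lemma \ref{lemm:2.6_new} to each of the two terms, taking $v = \uai \in H^{s+2}$ in both cases. For the term $\left( |x|^{-1} * (\uai \cdot \uai) \right) w_n$ I regard the two bounded sequences as the constant sequence $\uai$ and the weakly null sequence $w_n$; for the term $\left( |x|^{-1} * (\uai w_n) \right) \uai$ I regard them as $w_n$ (weakly null) and the constant sequence $\uai$. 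In either case the hypotheses of Lemma \ref{lemm:2.6_new} are satisfied and it yields convergence to $0$ in $H^s$, whence $\mathcal{N}^{(1)}(\uai) w_n \to 0$ in $H_r^s$. Consequently $\mathcal{N}^{(1)}(\uai) h_n \to \mathcal{N}^{(1)}(\uai) h$ in $H_r^s$, and since every bounded sequence admits a subsequence with convergent image, the operator is compact.

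I expect the only delicate point to be the bookkeeping in matching the two terms to the hypotheses of Lemma \ref{lemm:2.6_new} — in particular realizing that the fixed smooth profile $\uai$ plays the role of a (trivially bounded) constant sequence while the weakly null sequence is supplied by $w_n$ — and checking that the argument covers the full range $s \geq 0$. This last point causes no trouble, since Lemma \ref{lemm:2.6_new} is stated for $s \geq 0$ and every function entering the estimates lives at regularity at least $s+2$, where the trilinear bound of Lemma \ref{Lem:2.2} is available. Everything else is routine.
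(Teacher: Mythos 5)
Your proposal is correct and follows essentially the same route as the paper: the paper's proof consists precisely of taking a weakly null sequence in $H_r^{s+2}$ and applying Lemma \ref{lemm:2.6_new} (with $v = \uai$ and the constant sequence $\uai$ playing the role of the second bounded sequence) to conclude that the image converges strongly to zero in $H_r^s$. Your additional details — the boundedness estimate via Lemma \ref{Lem:2.2}, the subsequence extraction, and the explicit matching of each term to the hypotheses of Lemma \ref{lemm:2.6_new} — simply flesh out what the paper leaves implicit.
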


\begin{proof}
	Assume that the sequence $\{u_n\}$ converges weakly to zero in
	$H^{s + 2}(\mathbb{R}^3)$. Then, by Lemma \ref{lemm:2.6_new}, it
	follows that $\{\mathcal{N}^{(1)}(\uai)u_n \}$ converges strongly
	to zero in $H^{s}(\mathbb{R}^3)$.
\end{proof}

\begin{lemma}\label{Lem:3.1}
	For each $\lambda > 0$ and $s\geq 0$, the linearized operator
	$\La : H_r^{s+2}(\mathbb{R}^3)\to H_r^s(\mathbb{R}^3) $ is
	invertible.
\end{lemma}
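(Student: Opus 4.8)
The plan is to establish invertibility of $\La \colon H_r^{s+2} \to H_r^s$ by writing it as a Fredholm operator of index zero and then invoking the nondegeneracy assumption to rule out a kernel. Concretely, I would decompose
\begin{equation*}
	\La = \left(P_\infty(D) + \lambda\right) - \mathcal{N}^{(1)}(\uai),
\end{equation*}
where $P_\infty(D) = -\frac{1}{2m}\Delta$. The leading part $A := P_\infty(D) + \lambda \colon H_r^{s+2} \to H_r^s$ is an isomorphism for every $\lambda > 0$, since its symbol $\frac{|\xi|^2}{2m} + \lambda$ is bounded below by $\lambda > 0$ and comparable to $(1+|\xi|^2)$, so multiplication by it (and by its reciprocal) maps the weighted $L^2$ spaces defining $H^{s+2}$ and $H^s$ isomorphically onto one another; radial symmetry is preserved because the symbol is radial. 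Thus $A$ is boundedly invertible.

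Next I would use Lemma \ref{lemm:3.1_new}, which states that the lower-order term $\mathcal{N}^{(1)}(\uai) \colon H_r^{s+2} \to H_r^s$ is compact. Writing $\La = A - \mathcal{N}^{(1)}(\uai) = A\bigl(I - A^{-1}\mathcal{N}^{(1)}(\uai)\bigr)$, the operator $K := A^{-1}\mathcal{N}^{(1)}(\uai) \colon H_r^{s+2} \to H_r^{s+2}$ is compact, being the composition of a compact operator with the bounded operator $A^{-1}$. Hence $I - K$ is a Fredholm operator of index zero on $H_r^{s+2}$ (by the Riesz–Schauder theory), and composing with the isomorphism $A$ shows $\La$ is Fredholm of index zero from $H_r^{s+2}$ to $H_r^s$. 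For such an operator, injectivity is equivalent to surjectivity, so it suffices to show $\ker \La = \{0\}$.

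The kernel computation is where I would invoke the nondegeneracy of $\uai$. The excerpt recalls that $\uai$ is nondegenerate in $H^1_r(\mathbb{R}^3)$, meaning $\La u = 0$ with $u \in H^1_r$ forces $u = 0$. Since any $u \in \ker \La \cap H_r^{s+2}$ lies in particular in $H^1_r$ (as $s+2 \geq 1$), nondegeneracy gives $u = 0$ directly, so $\ker \La \cap H_r^{s+2} = \{0\}$. Combined with the index-zero Fredholm property, this yields bijectivity, and the bounded inverse follows from the open mapping theorem.

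\textbf{The main obstacle} I anticipate is the bookkeeping to ensure every step genuinely respects the radial subspaces $H_r^{s+2}$ and $H_r^s$ rather than the full spaces: one must check that $A^{-1}$ preserves radial functions (immediate from the radial symbol) and that $\mathcal{N}^{(1)}(\uai)$ maps radial to radial (which holds because $\uai$ is radial and convolution with the radial kernel $|x|^{-1}$ preserves radial symmetry). A secondary subtlety is that the nondegeneracy hypothesis is stated at regularity $H^1_r$, so I would note that the inclusion $H_r^{s+2} \hookrightarrow H_r^1$ lets me transfer a putative high-regularity kernel element down to the level where nondegeneracy applies; no elliptic bootstrap in the other direction is needed for this direction of the argument. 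Apart from these routine verifications, the Fredholm-plus-nondegeneracy strategy is entirely standard.
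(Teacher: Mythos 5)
Your proposal is correct and takes essentially the same approach as the paper: both reduce the problem to the Fredholm alternative by combining the isomorphism $P_\infty(D)+\lambda \colon H_r^{s+2}\to H_r^s$ with the compactness of $\mathcal{N}^{(1)}(\uai)$ from Lemma \ref{lemm:3.1_new}, and both kill the kernel via the $H^1_r$-nondegeneracy of $\uai$. The only (immaterial) difference is that the paper factors the compact perturbation on the target side, $\La = \bigl(Id - \mathcal{N}^{(1)}(\uai)(P_\infty(D)+\lambda)^{-1}\bigr)(P_\infty(D)+\lambda)$, so that the Fredholm alternative is applied on $H_r^s$, whereas you factor it on the domain side and work on $H_r^{s+2}$.
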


\begin{proof}
    We have that
	\begin{equation*}
		\La = P_\infty(D) + \lambda
		-\mathcal{N}^{(1)}(\uai)= \left(Id -
		\mathcal{N}^{(1)}(\uai)(P_\infty(D) + \lambda)^{-1}
		\right)(P_\infty(D)+\lambda).
	\end{equation*}
	It can easily be seen that $P_\infty(D)+\lambda:
	H_r^{s+2}(\mathbb{R}^3)\to H_r^s(\mathbb{R}^3)$ is invertible, we
	need only prove the invertibility of $Id -
	\mathcal{N}^{(1)}(\uai)(P_\infty(D)+\lambda)^{-1}$. It
	follows from the non-degeneracy of linearized operator
	$\La$ that $\ker \La \bigcap H_r^1=0.$ By Lemma
	\ref{lemm:3.1_new}, the operator
	$\mathcal{N}^{(1)}(\uai)(P_\infty(D)+\lambda)^{-1}$ is a
	compact operator on $H_r^s(\mathbb{R}^3)$, and by Fredholm
	alternative, we obtain the invertibility of $Id -
	\mathcal{N}^{(1)}(\uai)(P_\infty(D) + \lambda)^{-1}$, and
	the result follows.
\end{proof}
We state here also the following lemma which follows from the 
invertibility of $\La$
\begin{lemma}\label{Lem:3.3}
	Let $ f_n, h_n$ be two sequences in $\bigcap\limits_{s>0}
	H_r^{s}(\mathbb{R}^3)$ which satisfy
    \begin{equation}
        h_n = \La f_n ,
    \end{equation}
    and for each $s>0$, there holds
	\begin{equation*}
		\|h_n\|_{H^s} = o_{n}(1) + o_{n}(1) \|f_n\|_{H^s}, \quad
		\text{as } n \to \infty,
	\end{equation*}
	then $\|f_n\|_{H^s} = o_{n}(1)$ as $n\to \infty.$
\end{lemma}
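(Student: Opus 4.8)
The plan is to exploit the invertibility of $\La$ established in Lemma \ref{Lem:3.1} to convert the assumed estimate on $h_n$ into a closed inequality for $\|f_n\|_{H^s}$ alone, and then to absorb. Fix $s>0$. Since $\La \colon H_r^{s+2} \to H_r^s$ is invertible, its inverse $\La^{-1}\colon H_r^s \to H_r^{s+2}$ is bounded; writing $f_n = \La^{-1} h_n$ and using the continuous embedding $H^{s+2}\hookrightarrow H^s$, I obtain a constant $C_s>0$ (depending on $s$ but \emph{not} on $n$), namely $C_s = \|\La^{-1}\|_{\mathcal{L}(H^s;H^{s+2})}$, with
\begin{equation*}
	\|f_n\|_{H^s} \leq \|f_n\|_{H^{s+2}} \leq C_s \|h_n\|_{H^s}.
\end{equation*}
I would stress that each quantity $\|f_n\|_{H^s}$ is \emph{finite}, because $f_n \in \bigcap_{s>0} H_r^s$; this finiteness is exactly what makes the absorption step below legitimate.

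Next I would insert the hypothesis. Writing the assumption as $\|h_n\|_{H^s} \leq \varepsilon_n + \varepsilon_n \|f_n\|_{H^s}$ with $\varepsilon_n \to 0$, and combining it with the bound just derived, gives
\begin{equation*}
	\|f_n\|_{H^s} \leq C_s \varepsilon_n + C_s \varepsilon_n \|f_n\|_{H^s}.
\end{equation*}

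Finally comes the absorption. Since $\varepsilon_n \to 0$ and $C_s$ is fixed, I choose $N$ so large that $C_s \varepsilon_n \leq \tfrac{1}{2}$ for all $n \geq N$. For such $n$ the coefficient $1 - C_s \varepsilon_n \geq \tfrac{1}{2} > 0$ is bounded away from zero, so the (finite) term $C_s \varepsilon_n \|f_n\|_{H^s}$ may be moved to the left-hand side:
\begin{equation*}
	(1 - C_s \varepsilon_n)\|f_n\|_{H^s} \leq C_s \varepsilon_n,
	\qquad \text{hence} \qquad
	\|f_n\|_{H^s} \leq \frac{C_s \varepsilon_n}{1 - C_s \varepsilon_n} \leq 2 C_s \varepsilon_n .
\end{equation*}
As $\varepsilon_n \to 0$, the right-hand side tends to $0$, i.e.\ $\|f_n\|_{H^s} = o_n(1)$, which is the claim.

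I do not expect a serious obstacle: the argument is a bounded-inverse estimate followed by a standard absorption. The only two points deserving a word of care are (i) that $C_s$ is independent of $n$, so that it can be compared against the vanishing sequence $\varepsilon_n$, and (ii) that $\|f_n\|_{H^s}<\infty$ for every $n$, guaranteed by $f_n \in \bigcap_{s>0} H_r^s$, which is what permits subtracting $C_s \varepsilon_n \|f_n\|_{H^s}$ from both sides rather than producing a vacuous inequality.
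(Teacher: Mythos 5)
Your proof is correct and follows essentially the same route as the paper: both use the bounded inverse $\La^{-1}\colon H_r^{s}\to H_r^{s+2}$ to get $\|f_n\|_{H^s}\leq\|f_n\|_{H^{s+2}}\lesssim\|h_n\|_{H^s}$ and then feed in the hypothesis and absorb the $o_n(1)\|f_n\|_{H^s}$ term. The only difference is that you make explicit the absorption step and the finiteness of $\|f_n\|_{H^s}$ that justifies it, which the paper leaves implicit behind the word ``Hence''.
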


\begin{proof}
    Since $\La$ is invertible, there holds
	\begin{equation*}
		\|f_n\|_{H^{s+2}}=\|\La^{-1}h_n\|_{H^{s+2}} \lesssim
		\|h_n\|_{H^s} = o_{n}(1) + o_{n}(1)\|f_n\|_{H^s}.
	\end{equation*}
    Hence, for each $s>0$,
    \begin{equation*}
		\|f_{n}\|_{H^s} = o_{n}(1),
    \end{equation*}
	as $n\to \infty.$
\end{proof}

\section{Non-relativistic limit of the action ground state}
\label{sec:action}

In this section we fill in the proof of Theorem \ref{them:actionGS}
showing that the action ground state converges as $c \to +\infty$ to
the positive and radial action ground state for the equation
\eqref{1.3} $\uai$, which by Lemma 9 in \cite{MR2561169} is unique.
The uniqueness of $\ua$ for $c$ large will follow.

\begin{proposition}
	\label{Prop:A1}
	There exists $c_0 > 0$, such that for $c > c_0$, equation
	\eqref{1.2} has a unique radial and positive action ground state
	$\ua$. Moreover, for each $s > 0$, there holds
	\begin{equation*}
		\ua \to \uai \quad \text{in} \quad H^s(\mathbb{R}^3),\quad
		\text{as} \quad c\to \infty.
	\end{equation*}
\end{proposition}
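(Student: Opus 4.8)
The plan is to take from the cited literature (see \cite{MR2799908}) the existence and regularity of a positive radial action ground state $\ua$, and to prove the genuinely new content: the uniform control, the non-relativistic limit, and the uniqueness for $c$ large. Throughout I would exploit that on $\neh_c$ the action reduces to $\mathcal{J}_c(u) = \tfrac12 \mathcal{H}(u)$, so that the ground state level $m_c := \inf_{u\in\neh_c}\mathcal{J}_c(u)$ equals $\tfrac12\mathcal{H}(\ua)$, and the Nehari identity reads $\scalar{P_c(D)\ua}{\ua} + \lambda\norm{\ua}_{L^2}^2 = \mathcal{H}(\ua)$.

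I would first establish uniform bounds and, crucially, uniform higher regularity. Projecting $\uai\in\neh_\infty$ onto $\neh_c$ by the scalar $t_c$ with $t_c^2 = (\scalar{P_c(D)\uai}{\uai} + \lambda\norm{\uai}_{L^2}^2)/\mathcal{H}(\uai)$, the inequality $P_c(\xi)\leq P_\infty(\xi)$ furnished by Lemma \ref{Lem:4.1} (case $n=1$) together with dominated convergence gives $t_c\to 1$, hence $\limsup_{c\to\infty} m_c \leq m_\infty$; combined with the elementary pointwise bound $P_c(\xi)+\lambda \gtrsim (1+\abs{\xi}^2)^{1/2}$, valid uniformly for $c$ large, the Nehari identity yields a uniform $H^{1/2}$ bound, while a Hardy--Littlewood--Sobolev estimate for $\mathcal{H}$ gives a uniform lower bound $m_c\gtrsim 1$. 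The key technical observation is that, although $P_c(D)+\lambda$ is only of order one at high frequencies, its inverse $T_c := (P_c(D)+\lambda)^{-1}$ gains one derivative \emph{uniformly in $c$}, since $(P_c(\xi)+\lambda)^{-1}(1+\abs{\xi}^2)^{1/2}$ is bounded independently of $c\geq c_0$. Writing the equation as $\ua = T_c\mathcal{N}(\ua)$ and invoking the trilinear estimate of Lemma \ref{Lem:2.2}, one gets $\norm{\ua}_{H^{s+1}}\lesssim\norm{\mathcal{N}(\ua)}_{H^s}\lesssim\norm{\ua}_{H^s}^3$, so a bootstrap from the $H^{1/2}$ bound produces uniform bounds $\norm{\ua}_{H^s}\leq C_s$ for every $s>0$.

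Next I would identify the limit and upgrade the convergence. Along a subsequence $\ua\rightharpoonup u_*$ in $H^{1/2}_r$; the compact embedding $H^{1/2}_r(\mathbb{R}^3)\hookrightarrow L^p(\mathbb{R}^3)$ for $2<p<3$ and Lemma \ref{lemm:2.5} let me pass to the limit in the weak form of \eqref{1.2}, using $P_c(D)\phi\to P_\infty(D)\phi$ on test functions, so that $u_*$ solves \eqref{1.3}; the lower bound on $m_c$ forces $u_*\neq 0$, and radiality and positivity are inherited, whence $u_*=\uai$ by the uniqueness of the non-relativistic ground state, and the whole family converges since the limit is unique. For strong convergence I would use the Nehari identity once more: since $\scalar{P_c(D)\ua}{\ua}$ differs from $\tfrac{1}{2m}\norm{\nabla\ua}_{L^2}^2$ by a quantity $\lesssim c^{-2}$ (Lemma \ref{Lem:2.1} and the uniform $H^4$ bound), the equivalent $H^1$ norms satisfy $\tfrac{1}{2m}\norm{\nabla\ua}_{L^2}^2 + \lambda\norm{\ua}_{L^2}^2 \to \tfrac{1}{2m}\norm{\nabla\uai}_{L^2}^2 + \lambda\norm{\uai}_{L^2}^2$; together with weak $H^1$ convergence this gives strong $H^1$ convergence, and interpolation against the uniform $H^{s'}$ bounds upgrades it to $\ua\to\uai$ in $H^s$ for every $s>0$.

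Finally, for uniqueness with $c$ large, I would take two positive radial ground states $\ua^1,\ua^2$, both converging to $\uai$ in every $H^s$ by the above. With $w=\ua^1-\ua^2$ and $\mathcal{N}(\ua^1)-\mathcal{N}(\ua^2)=A_c w$, where $A_c=\int_0^1\mathcal{N}^{(1)}(\ua^2+tw)\,dt\to\mathcal{N}^{(1)}(\uai)$ on each $H^s$, the equation gives $w=T_cA_c w$, so the uniform one-derivative gain of $T_c$ and the boundedness of $A_c$ on $H^s$ yield the uniform comparability $\norm{w}_{H^{s+4}}\lesssim\norm{w}_{H^s}$. Assuming $w\neq 0$ and normalizing $v_c=w/\norm{w}_{H^s}$, I would rewrite the identity as $\La v_c = -P_{c,1}(D)v_c + \bigl(A_c-\mathcal{N}^{(1)}(\uai)\bigr)v_c$ and bound the right-hand side in $H^s$ by $\tfrac{C}{c^2}\norm{v_c}_{H^{s+4}} + o(1) = o(1)$ using Lemma \ref{Lem:2.1} and the comparability; Lemma \ref{Lem:3.3} then forces $\norm{v_c}_{H^s}=o(1)$, contradicting $\norm{v_c}_{H^s}=1$, so $w=0$. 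The main obstacle throughout is exactly the tension between the loss of derivatives in $P_c(D)-P_\infty(D)$ and the mere first-order ellipticity of $P_c(D)$; the resolution is to gain regularity only through the $c$-uniform first-order smoothing of $T_c$ and to absorb every derivative loss into the small factor $c^{-2}$ by means of the uniform comparability of high and low Sobolev norms of the differences.
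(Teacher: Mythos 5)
Your proposal is correct, and it reaches the Proposition by a genuinely different route than the paper in three of the four steps, so a comparison is worth recording. (i) For the uniform $H^s$ bounds you write $\ua = (P_c(D)+\lambda)^{-1}\mathcal{N}(\ua)$ and bootstrap with the trilinear estimate of Lemma \ref{Lem:2.2}, using the $c$-uniform coercivity \eqref{eq:coercive} to get a $c$-uniform gain of one derivative from the inverse; this is precisely the method of \cite[Proposition 4.2]{MR3734982}, which the paper explicitly chooses \emph{not} to follow, proving Lemma \ref{Lem:2.6} instead by applying $(-\Delta)^s\sqrt{-c^2\Delta+m^2c^4}$ to the equation. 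Both are valid; yours is shorter, the paper's is an alternative it wanted to showcase. (ii) For the limit, the paper runs Palais--Smale machinery (Lemmas \ref{Lem:A7} and \ref{Lem:A8}): it first gets strong $H^1$ convergence to the weak limit and then identifies the limit as a ground state from the level, while you identify the weak limit first as a nonzero, nonnegative, radial \emph{solution} of \eqref{1.3} (nontriviality from the uniform lower bound on the Nehari level and compactness of $\mathcal{H}$) and only then recover strong $H^1$ convergence from norm convergence. This works, but be precise at the identification step: what you need there is uniqueness of nonzero nonnegative radial solutions of \eqref{1.3}, not merely uniqueness of ground states, since at that stage $u_*$ is not yet known to achieve the ground state level; this stronger uniqueness is exactly what the paper also invokes (\cite[Lemma 9]{MR2561169}, \cite{zbMATH03576003}), so the appeal is legitimate, but your phrase ``uniqueness of the non-relativistic ground state'' should be corrected accordingly. (iii) For the upgrade from $H^1$ to $H^s$ you interpolate against the uniform $H^{s'}$ bounds, which yields the qualitative convergence claimed in the Proposition; the paper instead proves the quantitative Lemma \ref{conv_0}, $\|\ua-\uai\|_{H^s}\lesssim\|\ua-\uai\|_{H^{1}}+c^{-2}$, via invertibility of $\La$, and that stronger form is what gets reused in Section \ref{sec:asymptotic_action} (e.g.\ in \eqref{3.20}); your version suffices for this Proposition but would have to be replaced by the quantitative one later. (iv) For uniqueness at large $c$ the paper simply cites \cite[Proposition 3]{MR2561169}, whereas you give a self-contained nondegeneracy argument: $w=(P_c(D)+\lambda)^{-1}A_cw$, the $c$-uniform comparability $\|w\|_{H^{s+4}}\lesssim\|w\|_{H^s}$, the rewriting $\La v_c=-P_{c,1}(D)v_c+(A_c-\mathcal{N}^{(1)}(\uai))v_c$, and Lemma \ref{Lem:3.3} (or directly the invertibility of $\La$) to contradict $\|v_c\|_{H^s}=1$. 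This argument is correct, in the spirit of \cite{MR3457922}, and it actually fills in a step the paper delegates to a citation; it is the most valuable divergence in your write-up.
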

Regarding the existence of $\ua$, this is a standard variational
problem, and we omit the details, see for example \cite{MR2799908},
although there the ground state is found via the Mountain Pass
theorem.

Our main focus is on proving its non-relativistic limit. Let's recall
that action ground state $\ua$ ($\uai$) for the equation \eqref{1.2}
(resp., \eqref{1.3}) satisfies
\begin{equation*}
	\mathcal{J}_c(\ua)= \inf_{u \in \mathcal{M}_c} \mathcal{J}_c(u),
	\qquad \mathcal{J}_\infty(\uai) = \inf_{u \in
	\mathcal{M}_\infty} \mathcal{J}_\infty(u)
\end{equation*}
where 
\begin{equation*}
	\mathcal{M}_c= \settc{u\in H^{1/2}(\mathbb{R}^3)\setminus \{0\}}
	{d\mathcal{J}_c(u)[u] = 0},
\end{equation*}
and 
\begin{equation*} 
	\mathcal{M}_\infty= \settc{u\in H^{1}(\mathbb{R}^3)\setminus
	\{0\}}{ d\mathcal{J}_\infty(u)[u] = 0 }.
\end{equation*}

\begin{lemma}\label{Lem:A2}
	For each $c>1$, there exists $t_c>0$, such that $t_c \uai \in
	\mathcal{M}_c$, and there holds
    \begin{equation*}
		\lim_{c \to \infty} t_c = 1.
    \end{equation*}
\end{lemma}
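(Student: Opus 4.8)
The plan is to find $t_c$ by explicitly solving the Nehari constraint $d\mathcal{J}_c(t_c \uai)[t_c \uai] = 0$. Writing out this condition and dividing by the common factor $t_c^2$ (which is nonzero since $\uai \neq 0$), the equation $2\scalar{P_c(D)\uai}{\uai} + 2\lambda\norm{\uai}_{L^2}^2 = 2 t_c^2 \scalar{\mathcal{N}(\uai)}{\uai}$ becomes
\begin{equation*}
	t_c^2 = \frac{\scalar{P_c(D)\uai}{\uai} + \lambda \norm{\uai}_{L^2}^2}{\scalar{\mathcal{N}(\uai)}{\uai}}.
\end{equation*}
Here I would first note that the denominator $\scalar{\mathcal{N}(\uai)}{\uai} = \mathcal{H}(\uai)/4 > 0$ is a fixed positive constant independent of $c$, since $\uai$ is a fixed positive function. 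This gives existence of a unique $t_c > 0$ provided the numerator is positive, which it is because $P_c(\xi) \geq 0$ for all $\xi$ (as $\sqrt{c^2|\xi|^2 + m^2c^4} \geq mc^2$) and $\lambda > 0$.

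Next I would establish the limit $t_c \to 1$ by analyzing the numerator as $c \to \infty$. The key observation is that $\scalar{P_c(D)\uai}{\uai} \to \scalar{P_\infty(D)\uai}{\uai} = \frac{1}{2m}\norm{\nabla \uai}_{L^2}^2$. To justify this convergence I would apply Lemma \ref{Lem:2.1} with $n=1$: since $\uai \in \bigcap_{s>0} H^s$, in particular $\uai \in H^{4}$, we have
\begin{equation*}
	\abs{\scalar{P_c(D)\uai}{\uai} - \scalar{P_\infty(D)\uai}{\uai}} = \abs{\scalar{P_{c,1}(D)\uai}{\uai}} \leq \norm{P_{c,1}(D)\uai}_{L^2} \norm{\uai}_{L^2} \lesssim \frac{1}{c^2} \norm{\uai}_{H^4} \norm{\uai}_{L^2} \to 0.
\end{equation*}
Passing to the limit in the formula for $t_c^2$ then yields
\begin{equation*}
	\lim_{c\to\infty} t_c^2 = \frac{\frac{1}{2m}\norm{\nabla\uai}_{L^2}^2 + \lambda\norm{\uai}_{L^2}^2}{\scalar{\mathcal{N}(\uai)}{\uai}}.
\end{equation*}

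Finally I would identify this limit as exactly $1$ using that $\uai \in \mathcal{M}_\infty$, i.e. $d\mathcal{J}_\infty(\uai)[\uai] = 0$. This Nehari condition reads $\frac{1}{m}\norm{\nabla\uai}_{L^2}^2 + 2\lambda\norm{\uai}_{L^2}^2 = 2\scalar{\mathcal{N}(\uai)}{\uai}$ (being careful with the factor $\frac{1}{2m}$ in the kinetic term and the $4$ in $d\mathcal{H}$), so the numerator in the limit equals precisely $\scalar{\mathcal{N}(\uai)}{\uai}$, giving $\lim_{c\to\infty} t_c^2 = 1$ and hence $t_c \to 1$ since $t_c > 0$. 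The only mild subtlety, and the place I would be most careful, is tracking the exact numerical constants (the $\frac{1}{2m}$, the factor $4$ from $d\mathcal{H}(u)[v] = 4\scalar{\mathcal{N}(u)}{v}$, and the $2$'s in $d\mathcal{J}_c$) so that the non-relativistic Nehari identity and the limiting numerator match up exactly to produce $1$ rather than some spurious constant; the convergence itself is routine once Lemma \ref{Lem:2.1} is invoked.
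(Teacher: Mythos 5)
Your proof is correct and takes essentially the same route as the paper: solve the Nehari constraint for $t_c^2$ as a ratio, bound $\scalar{(P_c(D)-P_\infty(D))\uai}{\uai} = O(c^{-2})$ via the Taylor remainder (you through Lemma \ref{Lem:2.1} plus Cauchy--Schwarz, the paper through the symbol inequality \eqref{eq_A} of Lemma \ref{Lem:4.1} --- the same estimate), and identify the limit as $1$ from the non-relativistic Nehari identity. Your only slip is harmless: in fact $\scalar{\mathcal{N}(\uai)}{\uai} = \mathcal{H}(\uai)$, not $\mathcal{H}(\uai)/4$ (by homogeneity $d\mathcal{H}(u)[u] = 4\mathcal{H}(u)$), but you invoke that identity only to assert positivity of the denominator and otherwise keep everything expressed in terms of $\scalar{\mathcal{N}(\uai)}{\uai}$, so the final cancellation producing the limit $1$ is unaffected.
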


\begin{proof}
	Since $\uai  \in \neh_{\infty}$ we have that
	\begin{equation*}
		\scalar{(P_{\infty}(D) + \lambda) \uai }{\uai } = 
		\mathcal{H}(\uai ).
	\end{equation*}
	For all $u \in H^{1/2}(\mathbb{R}^{3})$
	\begin{equation*}
		d\mathcal{J}_c(u)[u] = 2\scalar{(P_{c}(D) + \lambda) u}{u} -
		2 \mathcal{H}(u)
	\end{equation*}
	and hence $t_c \uai \in \mathcal{M}_c$ provided $t_c$
	is such that
    \begin{align*}
		0 &= 2\scalar{(P_{c}(D) + \lambda) \uai }{\uai } =
		2t_c^2 \mathcal{H}(\uai )\\
		&=2t_c^2 \scalar{(P_{\infty}(D) +\lambda)
		\uai }{\uai }.
    \end{align*}
    Then
    \begin{equation*}
    	(t^{2}_{c} - 1) \scalar{(P_{\infty}(D) +\lambda)
		\uai }{\uai } ) = \scalar{(P_{c}(D) - 
		P_{\infty}(D))\uai }{\uai },
    \end{equation*}
    It follows from Lemma \ref{Lem:4.1} that
    \begin{equation}\label{eq_A}
		0 \leq P_{\infty}(\xi) - P_{c}(\xi) = \frac{|\xi|^2}{2m} -
		\left(\sqrt{c^2|\xi|^2 +m^2c^4}-mc^2\right) \leq
		\frac{|\xi|^4}{8m^3c^2}.
    \end{equation}
    and
    hence
	\begin{equation*}
		|t_c^2-1| \scalar{(P_{\infty}(D) +\lambda) \uai }{\uai } )
		\leq \frac{1}{8m^3c^2} \int_{\mathbb{R}^3}
		\mathcal{F}[\uai]^2(\xi) |\xi|^4 d\xi,
	\end{equation*}
    which yields $\lim\limits_{c\to \infty} t_c=1.$
\end{proof}

\begin{lemma}\label{Lem:A3}
	$\limsup\limits_{c\to \infty}\mathcal{J}_c(\ua)\leq
	\mathcal{J}_\infty(\uai)$.
\end{lemma}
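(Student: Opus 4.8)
The plan is to use the competitor $t_c\uai$ constructed in Lemma \ref{Lem:A2}, which lies on the relativistic Nehari manifold $\mathcal{M}_c$, and exploit the minimality of $\ua$. Since $\ua$ minimizes $\mathcal{J}_c$ over $\mathcal{M}_c$ and $t_c\uai \in \mathcal{M}_c$, one immediately has
\begin{equation*}
	\mathcal{J}_c(\ua) \leq \mathcal{J}_c(t_c\uai).
\end{equation*}
Hence the whole statement reduces to showing that the right-hand side converges, namely $\mathcal{J}_c(t_c\uai) \to \mathcal{J}_\infty(\uai)$ as $c\to\infty$; taking $\limsup$ then gives the bound.

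To evaluate $\mathcal{J}_c(t_c\uai)$ I would use the homogeneities of the three terms of $\mathcal{J}_c(u) = \scalar{P_c(D)u}{u} + \lambda\norm{u}_{L^2}^2 - \tfrac12\mathcal{H}(u)$: the first two terms are quadratic and $\mathcal{H}$ is quartic, so
\begin{equation*}
	\mathcal{J}_c(t_c\uai) = t_c^2\Bigl(\scalar{P_c(D)\uai}{\uai} + \lambda\norm{\uai}_{L^2}^2\Bigr) - \frac{t_c^4}{2}\mathcal{H}(\uai).
\end{equation*}
Now I pass to the limit factor by factor. By Lemma \ref{Lem:A2}, $t_c\to 1$, while $\norm{\uai}_{L^2}^2$ and $\mathcal{H}(\uai)$ are fixed constants. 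The only substantive point is the convergence of the relativistic kinetic term $\scalar{P_c(D)\uai}{\uai} \to \scalar{P_\infty(D)\uai}{\uai}$, which follows from the symbol estimate \eqref{eq_A}:
\begin{equation*}
	0 \leq \scalar{(P_\infty(D)-P_c(D))\uai}{\uai} \leq \frac{1}{8m^3c^2}\int_{\mathbb{R}^3} \mathcal{F}[\uai]^2(\xi)\,|\xi|^4\,d\xi = O(c^{-2}).
\end{equation*}
Recalling $\scalar{P_\infty(D)\uai}{\uai} = \tfrac{1}{2m}\norm{\nabla\uai}_{L^2}^2$, the limit of $\mathcal{J}_c(t_c\uai)$ is exactly $\mathcal{J}_\infty(\uai)$, which yields the claim.

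I do not expect a serious obstacle here: this is the standard competitor/recovery-function upper bound for a $\limsup$, and essentially all of its content is already packaged in Lemma \ref{Lem:A2} (producing $t_c\uai\in\mathcal{M}_c$ with $t_c\to 1$) together with the uniform symbol control \eqref{eq_A}. The single point requiring care is the finiteness of $\int|\xi|^4\mathcal{F}[\uai]^2\,d\xi$, i.e. that $\uai\in H^2(\mathbb{R}^3)$; this is guaranteed by the smoothness of the non-relativistic ground state (it belongs to $H^s$ for every $s$), so the error term is genuinely $O(c^{-2})$ and vanishes in the limit.
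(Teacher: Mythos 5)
Your proposal is correct and takes essentially the same approach as the paper: both exploit the competitor $t_c\uai \in \mathcal{M}_c$ from Lemma \ref{Lem:A2} and the minimality of $\ua$ to reduce everything to the behavior of $\mathcal{J}_c(t_c\uai)$. The only cosmetic difference is that the paper passes through $\mathcal{J}_c(t_c\uai) \leq \mathcal{J}_\infty(t_c\uai)$ using the one-sided operator inequality $P_c(D) \leq P_\infty(D)$ and then sends $t_c \to 1$, whereas you compute the limit of $\mathcal{J}_c(t_c\uai)$ directly from the two-sided bound \eqref{eq_A}; both ingredients come from Lemma \ref{Lem:4.1}, and your extra regularity requirement $\uai \in H^2$ is already implicit in the paper's proof of Lemma \ref{Lem:A2}.
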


\begin{proof}
	From Lemma \ref{Lem:4.1} we have that
	\begin{equation*}
		\sqrt{-c^2\Delta+m^2c^4} - mc^2 \leq -\frac{\Delta}{2m}.
	\end{equation*}
	It then follows Lemma \ref{Lem:A2} that
	\begin{equation*}
		\limsup_{c\to \infty} \mathcal{J}_c(\ua)  \leq
		\limsup_{c\to \infty}\mathcal{J}_c(t_c \uai ) \leq
		\limsup_{c\to \infty} \mathcal{J}_\infty(t_c \uai)
		=\mathcal{J}_\infty(\uai).
    \end{equation*}
\end{proof}

\begin{lemma}\label{Lem:A4}
    $\sup\limits_{c>1}\|\ua\|_{H^{1/2}}<\infty.$
\end{lemma}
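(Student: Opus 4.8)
The plan is to exploit the Nehari identity satisfied by $\ua$ together with a coercivity estimate for the symbol $P_c(\xi)+\lambda$ that is \emph{uniform} in $c>1$, and then to control the action $\mathcal{J}_c(\ua)$ from above uniformly in $c$. First I would record the Nehari identity. Since $\ua\in\mathcal{M}_c$ we have $d\mathcal{J}_c(\ua)[\ua]=0$, i.e. $\scalar{(P_c(D)+\lambda)\ua}{\ua}=\mathcal{H}(\ua)$; substituting this into $\mathcal{J}_c(\ua)=\scalar{(P_c(D)+\lambda)\ua}{\ua}-\frac{1}{2}\mathcal{H}(\ua)$ gives the clean relation
\[
2\mathcal{J}_c(\ua)=\scalar{(P_c(D)+\lambda)\ua}{\ua}=\int_{\mathbb{R}^3}\bigl(P_c(\xi)+\lambda\bigr)\,\abs{\mathcal{F}[\ua](\xi)}^2\,d\xi .
\]

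The heart of the matter is a uniform lower bound $P_c(\xi)+\lambda\geq C(1+\abs{\xi}^2)^{1/2}$ valid for all $\xi$ and all $c\geq 1$, with $C=C(m,\lambda)>0$. The obstacle is that the subtracted term $mc^2$ blows up as $c\to\infty$, so $P_c$ cannot be bounded below naively. The fix is the monotonicity of $P_c(\xi)$ in $c$: writing
\[
P_c(\xi)=\frac{c\abs{\xi}^2}{\sqrt{\abs{\xi}^2+m^2c^2}+mc}=\frac{\abs{\xi}^2}{\sqrt{\abs{\xi}^2/c^2+m^2}+m},
\]
one sees the right-hand side increases in $c$, hence $P_c(\xi)\geq P_1(\xi)=\sqrt{\abs{\xi}^2+m^2}-m$ for $c\geq 1$. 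Then $P_c(\xi)+\lambda\geq\sqrt{\abs{\xi}^2+m^2}-m+\lambda$, a fixed, strictly positive, continuous function of $\xi$ that grows like $\abs{\xi}$ at infinity; comparing it with $(1+\abs{\xi}^2)^{1/2}$ (the ratio is continuous, positive, and tends to $1$ as $\abs{\xi}\to\infty$) produces the constant $C$. Combined with the previous display this yields $2\mathcal{J}_c(\ua)\geq C\,\norm{\ua}_{H^{1/2}}^2$.

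It remains to bound $\mathcal{J}_c(\ua)$ from above uniformly in $c>1$. By Lemma \ref{Lem:A2} we have $t_c\uai\in\mathcal{M}_c$, so minimality gives $\mathcal{J}_c(\ua)\leq\mathcal{J}_c(t_c\uai)$, and since $P_c(\xi)\leq P_\infty(\xi)$ (Lemma \ref{Lem:4.1}) we get $\mathcal{J}_c(t_c\uai)\leq\mathcal{J}_\infty(t_c\uai)$. The defining relation for $t_c$ (from the proof of Lemma \ref{Lem:A2}) reads $t_c^2=\scalar{(P_c(D)+\lambda)\uai}{\uai}/\mathcal{H}(\uai)$, which is $\leq 1$ because $P_c\leq P_\infty$ and $\uai\in\mathcal{M}_\infty$ force $\scalar{(P_c(D)+\lambda)\uai}{\uai}\leq\scalar{(P_\infty(D)+\lambda)\uai}{\uai}=\mathcal{H}(\uai)$. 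Since $\mathcal{J}_\infty(t\uai)=\mathcal{H}(\uai)\bigl(t^2-\frac{t^4}{2}\bigr)\leq\frac{1}{2}\mathcal{H}(\uai)=\mathcal{J}_\infty(\uai)$ for $t\in[0,1]$, we obtain $\mathcal{J}_c(\ua)\leq\mathcal{J}_\infty(\uai)$ for every $c>1$. Putting the three steps together gives
\[
\norm{\ua}_{H^{1/2}}^2\leq\frac{2}{C}\,\mathcal{J}_\infty(\uai)
\]
uniformly in $c>1$, which is the claim. (For the large-$c$ regime alone one may instead simply invoke the $\limsup$ bound of Lemma \ref{Lem:A3}.)

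The main obstacle is the uniform coercivity in the second step: the growing constant $mc^2$ defeats any direct estimate, and the whole difficulty is resolved by the observation that $P_c(\xi)$ is monotone increasing in $c$, which reduces the problem to a single $c$-independent symbol comparison between $P_1(\xi)+\lambda$ and $(1+\abs{\xi}^2)^{1/2}$.
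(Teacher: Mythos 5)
Your proposal is correct and follows essentially the same route as the paper: the Nehari identity $2\mathcal{J}_c(\ua)=\scalar{(P_c(D)+\lambda)\ua}{\ua}$, the uniform coercivity $P_c(\xi)+\lambda\geq C\sqrt{1+\abs{\xi}^2}$ for $c>1$, and an upper bound on the action via the test functions $t_c\uai$. The only differences are that you supply details the paper leaves implicit --- you prove the coercivity inequality via monotonicity of $P_c(\xi)$ in $c$, and you upgrade the $\limsup$ bound of Lemma \ref{Lem:A3} to the genuinely uniform bound $\mathcal{J}_c(\ua)\leq\mathcal{J}_\infty(\uai)$ for all $c>1$ (via $t_c\leq 1$), which is in fact slightly more careful than the paper's direct appeal to Lemma \ref{Lem:A3}.
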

\begin{proof}
	Given that $\ua$ satisfies
	\begin{equation*}
		\scalar{(P_{c}(D) + \lambda)\ua}{\ua} = \mathcal{H}(\ua)
	\end{equation*}
	then $\mathcal{J}_{c}(\ua) = \frac{1}{2} \scalar{(P_{c}(D) + 
	\lambda)\ua}{\ua}$ and, by Lemma \ref{Lem:A3}, we get
	\begin{equation*}
		\sup_{c>1} \scalar{(P_{c}(D) + \lambda)\ua}{\ua} =
		\sup_{c>1} 2\mathcal{J}_c(\ua) < \infty.
	\end{equation*}
	There exists a constant $C > 0$ which depends on $\lambda > 0$
	and $m$, such that for $c>1$,
	\begin{equation}
		\label{eq:coercive}
		P_{c}(\xi)+ \lambda = \sqrt{c^2\abs{\xi}^{2} + m^2c^4} - mc^2
		+ \lambda \geq C \sqrt{\abs{\xi}^{2} + 1}.
	\end{equation}
	Therefore, we have
	\begin{equation*}
		\sup_{c>1} \|\ua\|_{H^{1/2}}^2 \lesssim
		\sup_{c>1} \scalar{(P_{c}(D) +
		\lambda)\ua}{\ua}<\infty.
	\end{equation*}
\end{proof}
As a corollary to Lemma \ref{Lem:A4} and by employing the method of
\cite[Lemma 3]{MR2561169}, we can obtain the following lemma.
\begin{lemma}
	$\sup\limits_{c>1}\|\ua\|_{H^{1}}<\infty.$
\end{lemma}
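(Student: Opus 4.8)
The plan is to bootstrap the uniform $H^{1/2}$ bound of Lemma \ref{Lem:A4} to an $H^{1}$ bound by an elliptic regularity argument on the equation $(P_{c}(D) + \lambda)\ua = \mathcal{N}(\ua)$, the decisive point being that the gain of regularity afforded by inverting $P_{c}(D) + \lambda$ is uniform in $c > 1$. First I would control the nonlinearity: since $\sup_{c > 1}\|\ua\|_{H^{1/2}} < \infty$, the trilinear estimate of Lemma \ref{Lem:2.2}, applied with $s = \tfrac{1}{2}$ and $u_{1} = u_{2} = u_{3} = \ua$, gives
\begin{equation*}
	\|\mathcal{N}(\ua)\|_{H^{1/2}} = \left\| \left( |x|^{-1} * \ua^{2} \right)\ua \right\|_{H^{1/2}} \lesssim \|\ua\|_{H^{1/2}}^{3},
\end{equation*}
so that $\mathcal{N}(\ua)$ is bounded in $H^{1/2}$ uniformly in $c$.

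Next I would invert the operator on the Fourier side. Since $P_{c}(\xi) + \lambda \geq \lambda > 0$, we may write $\mathcal{F}[\ua] = \mathcal{F}[\mathcal{N}(\ua)]/(P_{c}(\xi) + \lambda)$, and hence
\begin{equation*}
	\|\ua\|_{H^{1}}^{2} = \int_{\mathbb{R}^{3}} \frac{1 + |\xi|^{2}}{(P_{c}(\xi) + \lambda)^{2}} \, \abs{\mathcal{F}[\mathcal{N}(\ua)](\xi)}^{2} \, d\xi.
\end{equation*}
The key estimate is then that the Fourier multiplier $(P_{c}(D) + \lambda)^{-1}$ is bounded from $H^{1/2}$ to $H^{1}$ with a norm independent of $c$: using the coercivity \eqref{eq:coercive}, $P_{c}(\xi) + \lambda \geq C_{0}\sqrt{|\xi|^{2} + 1}$, one gets
\begin{equation*}
	\frac{1 + |\xi|^{2}}{(P_{c}(\xi) + \lambda)^{2}} \leq \frac{1 + |\xi|^{2}}{C_{0}^{2}(1 + |\xi|^{2})} \leq \frac{1}{C_{0}^{2}} \leq \frac{\sqrt{1 + |\xi|^{2}}}{C_{0}^{2}},
\end{equation*}
and therefore $\|\ua\|_{H^{1}}^{2} \leq C_{0}^{-2}\|\mathcal{N}(\ua)\|_{H^{1/2}}^{2} \lesssim \|\ua\|_{H^{1/2}}^{6}$, which is bounded uniformly in $c$ by Lemma \ref{Lem:A4}.

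I expect the only genuine subtlety to be the uniformity in $c$ of the regularity gain, which is precisely where the relativistic nature of $P_{c}(D)$ enters: at high frequencies $P_{c}(\xi) \sim c|\xi|$ grows only linearly rather than quadratically, so a priori inversion gains just one derivative rather than two. What saves uniformity is that, for $c > 1$, this linear growth still dominates $\sqrt{|\xi|^{2} + 1}$ with a constant independent of $c$ (this is the content of \eqref{eq:coercive}), so the single-derivative gain is enough to pass from $H^{1/2}$ to $H^{1}$. Everything else is the routine combination of the trilinear estimate and Plancherel's theorem; no compactness or limiting argument is needed, so the bound holds for every fixed $c > 1$ and is uniform.
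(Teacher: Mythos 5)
Your proof is correct. The three ingredients you use are all legitimately available at this point in the paper: the uniform $H^{1/2}$ bound of Lemma \ref{Lem:A4}, the trilinear estimate of Lemma \ref{Lem:2.2} with $s=\tfrac12$, and the $c$-independent coercivity \eqref{eq:coercive}. The Fourier-side division is justified because the equation $(P_c(D)+\lambda)\ua=\mathcal{N}(\ua)$ holds in the sense of tempered distributions with both sides in $L^2$ and $P_c(\xi)+\lambda\geq\lambda>0$, so $\mathcal{F}[\ua]=\mathcal{F}[\mathcal{N}(\ua)]/(P_c(\xi)+\lambda)$ a.e.; your chain of inequalities then gives $\norm{\ua}_{H^1}\lesssim\norm{\mathcal{N}(\ua)}_{L^2}\lesssim\norm{\ua}_{H^{1/2}}^3$ uniformly in $c>1$ (in fact your resolvent bound gains a full derivative, so the same computation yields a uniform $H^{3/2}$ bound).

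The route, however, is genuinely different from the paper's. The paper gives no proof of this lemma at all: it states it as a corollary of Lemma \ref{Lem:A4} obtained ``by employing the method of \cite[Lemma 3]{MR2561169}'' (Lenzmann), so your argument supplies details the paper omits. More interestingly, your argument — invert $P_c(D)+\lambda$ using \eqref{eq:coercive} and bootstrap through the trilinear estimate — is exactly the scheme the paper attributes to Choi–Hong–Seok \cite[Proposition 4.2]{MR3734982}, where, as the paper notes just before Lemma \ref{Lem:2.6}, ``the operator inequality \eqref{eq:coercive} played a crucial role.'' The paper deliberately avoids this scheme for the higher-order bounds of Lemma \ref{Lem:2.6}, instead acting with $(-\Delta)^s\sqrt{-c^2\Delta+m^2c^4}$ on the equation and using operator inequalities such as $(-\Delta)^s\leq 4s(-\Delta)^{1/4}+1-4s$. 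For the single step from $H^{1/2}$ to $H^1$ your resolvent argument is cleaner and entirely self-contained; moreover, iterating it ($\mathcal{N}(\ua)$ bounded in $H^{s}$ implies $\ua$ bounded in $H^{s+1}$) would reprove the uniform $H^s$ bounds of Lemma \ref{Lem:2.6} by the Choi–Hong–Seok route, so nothing in the later development would be lost by adopting your proof here.
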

In  \cite[Proposition 4.2]{MR3734982}, the authors show that the ground
state to \eqref{1.2} are uniformly bounded in higher order Sobolev
spaces. In their proof, the operator inequality \eqref{eq:coercive}
played a crucial role. Here, we adopt a different approach to prove
that $\ua$ are uniformly bounded in $H^s(\mathbb{R}^3)$.

\begin{lemma}\label{Lem:2.6}
	Let $\ua $ be the action ground state to \eqref{1.2}, then
	for each $s > 0$
	\begin{equation*}
		\sup_{c> 1}\|\ua\|_{H^s}<\infty.
	\end{equation*}
\end{lemma}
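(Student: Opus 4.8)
The plan is to bootstrap the Sobolev regularity of $\ua$, trading the one-derivative smoothing of the resolvent $(P_c(D)+\lambda)^{-1}$ against the boundedness of the Hartree nonlinearity supplied by the trilinear estimate of Lemma \ref{Lem:2.2}, starting from the uniform $H^1$ bound already established above. The whole point is that all constants produced along the way are independent of $c$.

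First I would rewrite the equation \eqref{1.2} in the fixed-point form
\[
	\ua = (P_c(D)+\lambda)^{-1}\mathcal{N}(\ua),
\]
which makes sense because the symbol satisfies $P_c(\xi)+\lambda \geq \lambda > 0$ everywhere, so $P_c(D)+\lambda$ is invertible. The crucial observation is that its inverse gains one full derivative \emph{uniformly} in $c$: squaring the coercivity inequality \eqref{eq:coercive} gives $(P_c(\xi)+\lambda)^2 \geq C^2(1+\abs{\xi}^2)$ with $C$ independent of $c > 1$, whence for every $s \geq 0$ and every $f \in H^s$,
\[
	\left\|(P_c(D)+\lambda)^{-1}f\right\|_{H^{s+1}}^2 = \int_{\mathbb{R}^3}\frac{(1+\abs{\xi}^2)^{s+1}}{(P_c(\xi)+\lambda)^2}\abs{\mathcal{F}[f](\xi)}^2\,d\xi \leq \frac{1}{C^2}\|f\|_{H^s}^2.
\]
Thus $(P_c(D)+\lambda)^{-1}$ is bounded from $H^s$ to $H^{s+1}$ with operator norm bounded uniformly in $c > 1$.

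Combining this with Lemma \ref{Lem:2.2}, which yields $\|\mathcal{N}(u)\|_{H^s} = \|(|x|^{-1}*u^2)u\|_{H^s} \lesssim \|u\|_{H^s}^3$ for $s \geq \tfrac{1}{2}$, the fixed-point identity gives, for every $s \geq \tfrac{1}{2}$,
\[
	\|\ua\|_{H^{s+1}} \lesssim \|\mathcal{N}(\ua)\|_{H^s} \lesssim \|\ua\|_{H^s}^3,
\]
with implied constants independent of $c$. Starting from the uniform bound $\sup_{c>1}\|\ua\|_{H^1} < \infty$ proved above, an induction on $k$ then yields $\sup_{c>1}\|\ua\|_{H^k} < \infty$ for every integer $k \geq 1$: at each step the hypothesis $s = k \geq \tfrac{1}{2}$ is met, the right-hand side is finite by the inductive assumption, and we gain one derivative. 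For an arbitrary $s > 0$ it suffices to pick an integer $k \geq s$ and use $\|\ua\|_{H^s} \leq \|\ua\|_{H^k}$.

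The only genuinely delicate point is the uniformity in $c$ of the one-derivative smoothing of $(P_c(D)+\lambda)^{-1}$: naively one might fear that the resolvent behaves like $1/(c\abs{\xi})$ at high frequencies and loses control in the intermediate regime $\abs{\xi}\sim mc$, but the coercivity estimate \eqref{eq:coercive} is precisely tailored to rule this out and to keep the bound independent of $c$. Once that estimate is in hand the remainder is a routine bootstrap, and there is no obstruction to pushing the regularity to every order $s$.
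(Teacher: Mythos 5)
Your proof is correct, but it is not the paper's argument --- in fact it is precisely the approach the paper deliberately avoids. Just before this lemma the authors remark that in \cite[Proposition 4.2]{MR3734982} the uniform higher-order bounds are obtained using the coercivity inequality \eqref{eq:coercive}, and that ``here, we adopt a different approach.'' Your proposal is essentially the Choi--Hong--Seok argument: write $\ua = (P_c(D)+\lambda)^{-1}\mathcal{N}(\ua)$, observe that \eqref{eq:coercive} makes the resolvent gain one derivative with norm uniform in $c>1$, and bootstrap against the trilinear estimate of Lemma \ref{Lem:2.2}. That is a clean and complete proof; the uniformity of the resolvent smoothing, which you correctly flag as the delicate point, follows exactly as you say from squaring \eqref{eq:coercive}. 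The paper instead applies $(-\Delta)^s\sqrt{-c^2\Delta+m^2c^4}$ to the equation, uses the identity $(\sqrt{-c^2\Delta+m^2c^4})^2 = -c^2\Delta+m^2c^4$ to produce the local operator $c^2(-\Delta)^{s+1}\ua$ on the left, and then controls the resulting right-hand side through the operator inequalities $\sqrt{-c^2\Delta+m^2c^4}\leq c(-\Delta)^{1/2}+mc^2$ and $(-\Delta)^s\leq 4s(-\Delta)^{1/4}+1-4s$, carefully tracking powers of $c$; the resulting recursion
\begin{equation*}
	\|\ua\|_{H^{2s+2}} \lesssim \frac{1}{c}\|\ua\|_{H^{2s+1}}^3 + \|\ua\|_{H^{2s}}^3 + \|\ua\|_{H^{2s}}
\end{equation*}
is then closed starting from the uniform $H^1$ bound. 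What your route buys is brevity and transparency (the derivative gain is isolated in a single Fourier-multiplier estimate); what the paper's route buys is independence from inverting $P_c(D)+\lambda$ --- only pointwise symbol inequalities and the algebraic squaring trick are used --- which is the methodological variation the authors wanted to exhibit. Both arguments rest on the same two external inputs: the uniform $H^1$ bound and Lemma \ref{Lem:2.2}.
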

\begin{proof}
	Acting with the operator $(-\Delta)^s\sqrt{-c^2\Delta + m^2c^4}$
	on both sides of \eqref{1.2}, we have
    \begin{equation}
        \begin{split}
			c^2(-\Delta)^{s+1} \ua & = \sqrt{-c^2\Delta
			+m^2c^4}(-\Delta)^s( (|x|^{-1}* \ua ^2)\ua ) \\
			& +(mc^2-\lambda )(-\Delta)^s( (|x|^{-1}* \ua^2)\ua )
			+(-\Delta)^s(\lambda ^2-2mc^2\lambda )\ua,
        \end{split}
    \end{equation}
    then,
    \begin{equation}\label{2.4}
		\begin{split}
			c^2\|(-\Delta)^{s+1} \ua \|_{L^2} & \lesssim \|
			\sqrt{-c^2\Delta +m^2c^4} (-\Delta)^s( (|x|^{-1}*
			\ua^2)\ua )\|_{L^2} \\
			& \quad+ c^2\|(-\Delta)^s( (|x|^{-1}* \ua^2)\ua  )\|_{L^2}
			+c^2\|(-\Delta)^s \ua \|_{L^2} \\
			& =: K_1 + K_2,
		\end{split}
    \end{equation}
    where
    \begin{align*}
		&K_1 = \|\sqrt{-c^2\Delta +m^2c^4}(-\Delta)^s( (|x|^{-1}*
		\ua^2)\ua )\|_{L^2} ,\\
        &K_2 = c^2\|(-\Delta)^s( (|x|^{-1}* \ua^2)\ua )\|_{L^2}
        +c^2\|(-\Delta)^s\ua\|_{L^2}.
    \end{align*}
    If $0\leq s< \frac{1}{4}$, we have operator inequality
    \begin{equation}\label{2.5}
        (-\Delta)^s\leq 4s(-\Delta)^{\frac{1}{4}} + 1-4s,
    \end{equation}
	which follows from Young inequality $ab\leq \frac{a^p}{p} +
	\frac{b^q}{q}$ with $\frac{1}{p}+\frac{1}{q}=1.$ Then, Lemma
	\ref{Lem:2.2} implies,
    \begin{equation}\label{2.6}
		K_2\lesssim c^2\left(\||x|^{-1}* \ua^2)\ua \|_{H^{1/2}}^3 +
		\|\ua\|_{H^{1/2}} \right) \lesssim
		c^2\left(\|\ua\|_{H^{1/2}}^3 + \|\ua\|_{H^{1/2}} \right).
    \end{equation}
    If $s\geq \frac{1}{4}$, by Lemma \ref{Lem:2.2} again
    \begin{equation}\label{2.7}
		K_2 \lesssim c^2\left(\|\ua\|_{H^{2s}}^3 +
		\|\ua\|_{H^{2s}}\right).
    \end{equation}
    To estimate $K_1$, from the operator inequality
    \begin{equation}\label{2.8}
        \sqrt{-c^2\Delta+m^2c^4}\leq c(-\Delta)^{\frac{1}{2}}+ mc^2,
    \end{equation}
    and combine \eqref{2.8} with \eqref{2.5}, we are led to
    \begin{equation}
		\label{2.9}
		K_1 \lesssim 
		\begin{cases}
			c \|\ua\|_{H^{2s+1}}^3 + c^2 \|\ua\|_{H^{1/2}}^3,
			& 0 \leq s < \frac{1}{4},\\
			c\|\ua\|_{H^{2s+1}}^3 + c^2 \|\ua\|_{H^{2s}}^3,
			&\frac{1}{4} <s.
		\end{cases}
    \end{equation}
	Consequently, by \eqref{2.4}, \eqref{2.6}, \eqref{2.7} and
	\eqref{2.9}, we get
	\begin{equation}
		\label{2.10}
		\|\ua\|_{H^{2s+2}} \lesssim 
		\begin{cases}
			\frac{1}{c} \|\ua\|_{H^{2s+1}}^3 + \|\ua\|_{H^{1/2}}^3
			+ \|\ua\|_{H^{1/2}} & 0 \leq s < \frac{1}{4}, \\
			\frac{1}{c} \|\ua\|_{H^{2s+1}}^3 + \|\ua\|_{H^{2s}}^3 +
			\|\ua\|_{H^{2s}} & \frac{1}{4} < s.
		\end{cases}
	\end{equation}
    By combining \eqref{2.10}
    with the fact that
	\begin{equation*}
		\sup_{c>1}\|\ua\|_{H^1}  < \infty,
	\end{equation*}
	we conclude that for each
	$s>0$
	\begin{equation*}
		\sup_{c>1}\|\ua\|_{H^s}  < \infty.
	\end{equation*}
\end{proof}

\begin{lemma}\label{Lem:A7}
	$\lim\limits_{c\to \infty}\mathcal{J}_\infty(\ua) =
	\mathcal{J}_\infty(\uai)$.
\end{lemma}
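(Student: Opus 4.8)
The plan is to prove the equality by establishing the two one-sided bounds $\limsup_{c\to\infty}\mathcal{J}_\infty(\ua)\le\mathcal{J}_\infty(\uai)$ and $\liminf_{c\to\infty}\mathcal{J}_\infty(\ua)\ge\mathcal{J}_\infty(\uai)$ separately, and combining them. The upper bound comes almost for free from the work already done, while the lower bound is the substantive part.

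For the upper bound I would compare $\mathcal{J}_\infty$ and $\mathcal{J}_c$ at the same function. Since the two functionals differ only in their kinetic term,
\begin{equation*}
	0\le\mathcal{J}_\infty(\ua)-\mathcal{J}_c(\ua)=\scalar{(P_\infty(D)-P_c(D))\ua}{\ua}\le\frac{1}{8m^3c^2}\|\Delta\ua\|_{L^2}^2,
\end{equation*}
where both inequalities are read off from \eqref{eq_A}. By Lemma \ref{Lem:2.6} the norm $\|\Delta\ua\|_{L^2}^2\le\|\ua\|_{H^2}^2$ is bounded uniformly in $c$, so the correction tends to $0$. Together with Lemma \ref{Lem:A3} this gives $\limsup_{c\to\infty}\mathcal{J}_\infty(\ua)=\limsup_{c\to\infty}\mathcal{J}_c(\ua)\le\mathcal{J}_\infty(\uai)$.

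For the lower bound I would argue by compactness. By Lemma \ref{Lem:2.6} the family $\{\ua\}$ is bounded in $H^s_r$ for every $s$, so along a subsequence $\ua\rightharpoonup u^\ast$ weakly in each $H^s$, and by the compactness of the radial embedding $H^1_r(\mathbb{R}^3)\hookrightarrow L^p(\mathbb{R}^3)$ for $2<p<6$ the convergence is strong in $L^p$, in particular in $L^{12/5}$. Testing \eqref{1.2} against a radial $\varphi\in C_c^\infty$ and letting $c\to\infty$ — using $P_c(D)\varphi\to P_\infty(D)\varphi$ in $L^2$ (Lemma \ref{Lem:2.1}) for the linear part and the strong $L^{12/5}$ convergence for the Hartree term — shows that $u^\ast$ solves \eqref{1.3}, so that $u^\ast\in\mathcal{M}_\infty$ as soon as we know $u^\ast\ne0$.

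The crux is therefore to exclude vanishing. On $\mathcal{M}_c$ one has $\mathcal{H}(\ua)=\scalar{(P_c(D)+\lambda)\ua}{\ua}$, which by the coercivity \eqref{eq:coercive} is bounded below by $C\|\ua\|_{H^{1/2}}^2$, whereas the Hardy--Littlewood--Sobolev inequality (Lemma \ref{hls}) together with the Sobolev embedding $H^{1/2}\hookrightarrow L^{12/5}$ gives $\mathcal{H}(\ua)\lesssim\|\ua\|_{H^{1/2}}^4$; these two bounds force $\|\ua\|_{H^{1/2}}^2\ge c_0>0$ uniformly in $c$, hence $\mathcal{H}(\ua)\ge Cc_0>0$. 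Since $\mathcal{H}(\ua)\to\mathcal{H}(u^\ast)$ by the strong $L^{12/5}$ convergence, we conclude $\mathcal{H}(u^\ast)>0$, so $u^\ast\ne0$ and $u^\ast\in\mathcal{M}_\infty$. Writing $\mathcal{J}_\infty(\ua)=\frac{1}{2m}\|\nabla\ua\|_{L^2}^2+\lambda\|\ua\|_{L^2}^2-\frac12\mathcal{H}(\ua)$ and using weak lower semicontinuity of the nonnegative quadratic part together with the convergence of the Hartree term, I obtain
\begin{equation*}
	\liminf_{c\to\infty}\mathcal{J}_\infty(\ua)\ge\mathcal{J}_\infty(u^\ast)\ge\inf_{u\in\mathcal{M}_\infty}\mathcal{J}_\infty(u)=\mathcal{J}_\infty(\uai).
\end{equation*}
As the subsequence was arbitrary, this bound holds for the full limit, and combined with the upper bound it yields the claim. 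The main obstacle is precisely this no-vanishing step: it is where the uniform Nehari lower bound and the compactness of the Hartree functional under radial convergence are indispensable, since weak convergence alone could lose all the mass and trivialize the limit.
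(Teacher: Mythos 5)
Your proof is correct, and its upper-bound half is exactly the paper's: both compare $\mathcal{J}_\infty(\ua)$ with $\mathcal{J}_c(\ua)$ via the operator inequality of Lemma \ref{Lem:4.1}, use the uniform $H^2$ bound from Lemma \ref{Lem:2.6}, and invoke Lemma \ref{Lem:A3}. For the lower bound, however, you take a genuinely different route. The paper stays entirely at the level of the Nehari manifolds: repeating the scaling argument of Lemma \ref{Lem:A2}, it produces $r_c>0$ with $r_c\ua\in\mathcal{M}_\infty$ and $r_c\to1$, whence $\mathcal{J}_\infty(\uai)\le\mathcal{J}_\infty(r_c\ua)=\mathcal{J}_\infty(\ua)+o_{c}(1)$ by minimality of $\uai$ and the uniform bounds. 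You instead extract a weak subsequential limit $u^\ast$ via the compact radial embedding, identify it as a nontrivial solution of \eqref{1.3} (hence a point of $\mathcal{M}_\infty$), and conclude by weak lower semicontinuity of the quadratic part together with continuity of $\mathcal{H}$ under strong $L^{12/5}$ convergence. Your route is longer, but it makes fully explicit the non-vanishing step (coercivity \eqref{eq:coercive} against the Hardy--Littlewood--Sobolev bound of Lemma \ref{hls}), which the paper's ``repeat Lemma \ref{Lem:A2}'' implicitly needs as well---to keep $\mathcal{H}(\ua)$ bounded away from zero so that $r_c\to1$---but does not spell out; it also anticipates the compactness machinery that the paper only deploys in the subsequent lemma proving $\|\ua-\uai\|_{H^1}\to 0$, so you prove a stronger intermediate fact (existence of a nontrivial limiting solution) at the price of some redundancy with what comes next, whereas the paper's scaling projection buys brevity and stays purely variational.
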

\begin{proof}
	Repeating the argument of Lemma \ref{Lem:A2}, for each $c>1$,
	there exists $r_c>0$, such that $r_c \ua \in \mathcal{M}_\infty$,
	and there holds
	\begin{equation*}
		\lim_{c\to \infty} r_c=1.
	\end{equation*}
    Then, 
	\begin{equation*}
		\mathcal{J}_\infty(\uai) \leq \mathcal{J}_\infty(r_c \ua) \leq
		\liminf\limits_{c\to \infty}\mathcal{J}_\infty(\ua).
	\end{equation*}
    On the other hand, by operator inequality (see Lemma 
	\ref{Lem:4.1}) we have
	\begin{equation*}
		- \frac{\Delta}{2m}\leq \sqrt{-c^2\Delta + m^2c^4}-mc^2 +
		\frac{\Delta^2}{8m^3c^2},
	\end{equation*}
	and hence we have
	\begin{equation*}
		\mathcal{J}_\infty(\ua) \leq \mathcal{J}_c(\ua) +
		\frac{1}{8m^3c^2}\|\Delta \ua\|_{L^2}^2.
	\end{equation*}
	By combining this with Lemma \ref{Lem:A3} and Lemma \ref{Lem:2.6},
	there holds
	\begin{equation*}
		\limsup\limits_{c\to \infty}\mathcal{J}_\infty(\ua)\leq
		\mathcal{J}_\infty(\uai),
	\end{equation*}
	which completes the proof of the lemma.
\end{proof}

\begin{lemma}\label{Lem:A8}
	$\{\ua\}$ is a bounded Palais-Smale sequence for the functional
	$\mathcal{J}_\infty(u)$ at level $\mathcal{J}_\infty(\uai).$
\end{lemma}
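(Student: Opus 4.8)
The plan is to verify the three defining requirements of a bounded Palais--Smale sequence for $\mathcal{J}_\infty$ at level $\mathcal{J}_\infty(\uai)$: uniform boundedness in $H^1$, convergence of the values $\mathcal{J}_\infty(\ua)\to\mathcal{J}_\infty(\uai)$, and convergence of the differentials $d\mathcal{J}_\infty(\ua)\to 0$ in $(H^1)^*$. Two of these are already available: boundedness is Lemma \ref{Lem:2.6} with $s=1$, and the convergence of energy levels is precisely Lemma \ref{Lem:A7}. So the only genuinely new content is the vanishing of the differential, which I would establish as follows.

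The key step is to remove the Hartree nonlinearity by exploiting that $\ua$ solves the Euler--Lagrange equation \eqref{1.2}. Starting from
\[
d\mathcal{J}_\infty(u)[v] = 2\scalar{P_\infty(D)u}{v} + 2\lambda\scalar{u}{v} - 2\scalar{\mathcal{N}(u)}{v},
\]
I would insert the relation $\lambda\scalar{\ua}{v} - \scalar{\mathcal{N}(\ua)}{v} = -\scalar{P_c(D)\ua}{v}$, valid for every $v\in H^{1/2}$ since $P_c(D)\ua+\lambda\ua=\mathcal{N}(\ua)$. The mass and nonlinear terms then cancel against the relativistic kinetic term and leave
\[
d\mathcal{J}_\infty(\ua)[v] = 2\scalar{(P_\infty(D)-P_c(D))\ua}{v} = -2\scalar{P_{c,1}(D)\ua}{v},
\]
where the last equality uses $P_{c,1}(\xi)=P_c(\xi)-P_\infty(\xi)$ (recall $\alpha_1=1/2$, so the first Taylor term of $P_c$ is exactly $P_\infty$).

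What remains is a single operator-norm estimate. By Cauchy--Schwarz, $\abs{d\mathcal{J}_\infty(\ua)[v]}\le 2\norm{P_{c,1}(D)\ua}_{L^2}\norm{v}_{H^1}$, hence $\norm{d\mathcal{J}_\infty(\ua)}_{(H^1)^*}\le 2\norm{P_{c,1}(D)\ua}_{L^2}$. Applying Lemma \ref{Lem:2.1} with $n=1$ and $s=0$ gives $\norm{P_{c,1}(D)\ua}_{L^2}\le \frac{C_1}{m^3c^2}\norm{\ua}_{H^4}$, and the uniform bound $\sup_{c>1}\norm{\ua}_{H^4}<\infty$ from Lemma \ref{Lem:2.6} yields $\norm{d\mathcal{J}_\infty(\ua)}_{(H^1)^*}\lesssim c^{-2}\to 0$ as $c\to\infty$.

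The main obstacle is conceptual rather than computational: one must recognize that testing $d\mathcal{J}_\infty(\ua)$ against an arbitrary $v$ and substituting the relativistic equation collapses the entire differential onto the operator difference $P_\infty(D)-P_c(D)=-P_{c,1}(D)$, which is exactly the quantity whose $\mathcal{L}(H^{4};H^0)$-norm is controlled with rate $c^{-2}$ by the preliminary estimates. Once this reduction is made, all the real work has already been carried out in the uniform higher-order Sobolev bounds of Lemma \ref{Lem:2.6}.
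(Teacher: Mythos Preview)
Your proof is correct and follows essentially the same route as the paper: both use that $d\mathcal{J}_c(\ua)=0$ to reduce $d\mathcal{J}_\infty(\ua)[v]$ to the kinetic difference $2\scalar{(P_\infty(D)-P_c(D))\ua}{v}$, then control this by the symbol estimate $\abs{P_{c,1}(\xi)}\lesssim c^{-2}\abs{\xi}^4$ together with the uniform $H^s$ bounds of Lemma~\ref{Lem:2.6}, and invoke Lemma~\ref{Lem:A7} for the level. The only cosmetic difference is that you package the symbol estimate via Lemma~\ref{Lem:2.1} (as an $H^4\to L^2$ bound) whereas the paper writes out the Fourier-side integral directly.
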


\begin{proof}
    It follows from \eqref{eq_A} and Lemma \ref{Lem:2.6} that
    \begin{align*}
		\sup_{\|h\|_{H^1} \leq 1} |d\mathcal{J}_\infty(\ua)[h]| &=
		\sup_{\|h\|_{H^1} \leq 1} |d\mathcal{J}_\infty(\ua)[h] -
		d\mathcal{J}_c(\ua)[h]| \\
		&= 2\sup_{\|h\|_{H^1} \leq 1} \scalar{(P_{c}(D) - 
		P_{\infty}(D))\ua}{h} \\
		&\leq \frac{1}{4m^{3}c^{2}} \sup_{\|h\|_{H^1} \leq 1}
		\int_{\mathbb{R}^3} |\xi|^4|
		\mathcal{F}[{u}_c]|(\xi)|\mathcal{F}[{h}]|(\xi) \, d\xi \leq
		\frac{C}{c^2},
    \end{align*}
	and the lemma follows taking into account Lemma \ref{Lem:A7}.
\end{proof}
\begin{lemma}
    $\|\ua - \uai\|_{H^1}\to 0$ as $c\to \infty$.
\end{lemma}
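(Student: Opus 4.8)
The plan is to use that, by Lemma~\ref{Lem:A8}, $\{\ua\}$ is a bounded Palais--Smale sequence for $\mathcal{J}_\infty$ at the level $\mathcal{J}_\infty(\uai)$, and to extract a strongly convergent subsequence by exploiting the compactness available for radial functions. Since each $\ua$ is radial we work in $H^1_r(\mathbb{R}^3)$, where the embedding into $L^p(\mathbb{R}^3)$ is compact for $2<p<6$; this compactness is what lets the Hartree nonlinearity pass to weak limits and avoids any concentration--compactness analysis. From the bounded family I would extract a subsequence with $\ua \rightharpoonup u_*$ weakly in $H^1_r$ and $\ua \to u_*$ strongly in every $L^p$, $2<p<6$.

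Next I would identify the weak limit as a solution of \eqref{1.3}. For fixed $h\in H^1_r$ one has $d\mathcal{J}_\infty(\ua)[h] = 2\scalar{(P_\infty(D)+\lambda)\ua - \mathcal{N}(\ua)}{h}$, and Lemma~\ref{Lem:A8} gives $d\mathcal{J}_\infty(\ua)[h]\to 0$. The quadratic part converges by weak $H^1$ convergence, while the Hartree term converges because $\ua^2\to u_*^2$ in $L^{6/5}$, hence $|x|^{-1}\ast\ua^2 \to |x|^{-1}\ast u_*^2$ in $L^6$ by the Hardy--Littlewood--Sobolev inequality (Lemma~\ref{hls}), so that $\mathcal{N}(\ua)\to\mathcal{N}(u_*)$ in $L^2$. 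Passing to the limit gives $(P_\infty(D)+\lambda)u_* = \mathcal{N}(u_*)$.

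The main obstacle is to show that $u_*\neq 0$. For this I would use the level: testing the Palais--Smale condition on $\ua$ itself, $d\mathcal{J}_\infty(\ua)[\ua]\to 0$, combined with $\mathcal{J}_\infty(\ua)\to\mathcal{J}_\infty(\uai)$, yields $\mathcal{H}(\ua)\to 2\mathcal{J}_\infty(\uai)>0$. If instead $u_*=0$, the strong $L^{12/5}$ convergence together with Lemma~\ref{hls} would give $\mathcal{H}(\ua)\lesssim\|\ua\|_{L^{12/5}}^4\to 0$, a contradiction. Hence $u_*\neq 0$; being a nontrivial, nonnegative (as a weak limit of positive functions), radial solution of \eqref{1.3}, it is positive by the positivity of the resolvent of $-\tfrac{1}{2m}\Delta+\lambda$, and so $u_*=\uai$ by the uniqueness of the positive radial solution.

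Finally I would upgrade weak convergence to strong convergence in $H^1$. Endowing $H^1_r$ with the equivalent inner product $\scalar{(P_\infty(D)+\lambda)\,\cdot\,}{\cdot}$, the same computation that produced $\mathcal{H}(\ua)\to 2\mathcal{J}_\infty(\uai)$ also gives $\scalar{(P_\infty(D)+\lambda)\ua}{\ua}\to 2\mathcal{J}_\infty(\uai)=\scalar{(P_\infty(D)+\lambda)\uai}{\uai}$, i.e.\ convergence of norms. Combined with $\ua\rightharpoonup\uai$ in this Hilbert space, this forces $\|\ua-\uai\|_{H^1}\to 0$ along the subsequence. Since the limit $\uai$ is the unique positive radial solution and is independent of the subsequence, every subsequence has a further subsequence converging to $\uai$, so the whole family converges as $c\to\infty$.
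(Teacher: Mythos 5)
Your proof is correct, and its skeleton is the same as the paper's: a bounded radial Palais--Smale sequence (Lemmas \ref{Lem:A7} and \ref{Lem:A8}), the compact embedding $H^1_r(\mathbb{R}^3)\hookrightarrow L^p$ for $2<p<6$, passage to the limit in the Hartree term via Hardy--Littlewood--Sobolev, and the uniqueness of the nonnegative radial solution of \eqref{1.3}. Where you differ is in the order and mechanism of the final two steps. The paper proves strong $H^1$ convergence \emph{first}, by writing the quadratic form of $\ua-u$ as $\tfrac12\left(d\mathcal{J}_\infty(\ua)-d\mathcal{J}_\infty(u)\right)[\ua-u]$ plus Hartree remainder terms, each of which vanishes by HLS and the strong $L^p$ convergence; non-triviality of the limit is then implicit, since strong convergence and Lemma \ref{Lem:A8} force $\mathcal{J}_\infty(u)=\mathcal{J}_\infty(\uai)>0$, and the limit is identified as a ground state (via Lemma 9 of \cite{MR2561169}) before uniqueness is invoked. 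You instead identify the limit \emph{first}: you rule out $u_*=0$ by the Nehari-type computation $\mathcal{H}(\ua)\to 2\mathcal{J}_\infty(\uai)>0$ against the vanishing bound $\mathcal{H}(\ua)\lesssim\|\ua\|_{L^{12/5}}^4\to 0$, upgrade nonnegativity to strict positivity through the positivity of the kernel of $\left(-\tfrac{1}{2m}\Delta+\lambda\right)^{-1}$, and apply uniqueness of the positive radial solution; strong convergence then follows from the Radon--Riesz argument, i.e.\ $\scalar{(P_\infty(D)+\lambda)\ua}{\ua}\to\scalar{(P_\infty(D)+\lambda)\uai}{\uai}$ together with weak convergence in the equivalent Hilbert norm. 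Your version has the merit of making the non-vanishing step explicit (the paper leaves it implicit) and of identifying the limit without needing it to be a ground state, at the cost of the extra positivity/kernel argument; the paper's version gets strong convergence in a single direct computation. Both ultimately rest on the same uniqueness statement, so neither route is logically weaker, and your subsequence principle at the end correctly handles the continuous parameter $c$.
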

\begin{proof}
	Since $\{\ua\}$ is radial and bounded sequence in
	$H^1(\mathbb{R}^3)$, by compact embedding
	$H^1_r(\mathbb{R}^3)\hookrightarrow L^p$ $(2<p<6)$, up to a
	sequence, we can assume that $\ua$ converge weakly to $u$ in
	$H^1(\mathbb{R}^3)$ and $\|\ua - u\|_{L^p} \to 0$. Then, $u \geq
	0$ is a weak solution to \eqref{1.3}. By Lemma \ref{hls} and
	Hölder inequality,
    \begin{align*}
		\int_{\mathbb{R}^3 \times \mathbb{R}^3}
		\frac{\ua^2(x) \ua(y)(\ua(y) - u(y))}{|x-y|} \, dx dy \leq &C
		\|\ua\|_{L^{12/5}}^2 \|\ua(\ua - u)\|_{L^{6/5}}\\
		\leq &C\|\ua\|_{L^{12/5}}^2\|\ua\|_{L^2}\|\ua - u\|_{L^3}\\
		\to& 0, \quad \text{ as } c \to \infty.
    \end{align*}
	Similarly
	\begin{equation*}
		\int_{\mathbb{R}^3\times \mathbb{R}^3}
		\frac{u^2(x)u(y)(\ua(y)- u(y))}{|x-y|} \, d x d y \to 0, \quad
		\text{ as } c\to \infty.
\end{equation*}
Hence
\begin{align*}
	&\frac{1}{2m}\|\nabla(\ua-u)\|_{L^2}^2+ \lambda\|\ua - u\|_{L^2}^2
	= \frac{1}{2} \left(d\mathcal{J}_\infty(\ua) -
	d\mathcal{J}_\infty(u) \right)[\ua - u]\\
	&\quad +\int_{\mathbb{R}^3\times \mathbb{R}^3}
	\frac{\ua^2(x)\ua(y)(\ua(y) - u(y))}{|x-y|} \, d x d y
	-\int_{\mathbb{R}^3\times \mathbb{R}^3} \frac{u^2(x)u(y)(\ua(y)-
	u(y))}{|x-y|} \, d x d y\\
	&\quad \to 0, \quad \text{ as } c\to \infty,
\end{align*}
which implies 
\begin{equation*}
	\|\ua- u\|_{H^1}\to 0, \quad as \,\,c\to \infty.
\end{equation*}
It follows from Lemma \ref{Lem:A8}, $u$ is a radially symmetric and
nonnegative action ground state solution to \eqref{1.3}(see
\cite[Lemma 9]{MR2561169}). By the uniqueness of radial and
nonnegative solution to \eqref{1.3}, we have $u= \uai$.
\end{proof}

The following lemma implies that convergence of the
pseudo-relativistic ground state in a low Sobolev norm will lead to
convergence in a high Sobolev norm. This lemma was first proved in
\cite[Proposition 6.1]{MR3734982}, and here we prove it by utilizing
the invertibility of the operator $\La$.
\begin{lemma}
	\label{conv_0}
    For all $s>1$,
	\begin{equation*}
		\|\ua- \uai\|_{H^s} \lesssim \|\ua - \uai\|_{H^{1}} +
		\frac{1}{c^2}.
    \end{equation*}
\end{lemma}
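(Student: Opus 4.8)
The plan is to turn the difference $v_c := \ua - \uai$ into the solution of a \emph{fixed} linear equation governed by the invertible operator $\La$, and then bootstrap in the Sobolev exponent. Subtracting equation \eqref{1.3} for $\uai$ from equation \eqref{1.2} for $\ua$ and splitting $P_c(D)\ua = P_\infty(D)\ua + (P_c(D)-P_\infty(D))\ua$, the left-hand side becomes $(P_\infty(D)+\lambda)v_c + (P_c(D)-P_\infty(D))\ua$. On the right I expand $\mathcal{N}(\ua) = \mathcal{N}(\uai+v_c)$ by the \emph{exact} Taylor formula of Lemma \ref{Lem:2.4} (exact because $\mathcal{N}^{(k)}=0$ for $k\geq 4$), giving $\mathcal{N}(\ua)-\mathcal{N}(\uai) = \mathcal{N}^{(1)}(\uai)v_c + \frac12\mathcal{N}^{(2)}(\uai)[v_c,v_c] + \frac16\mathcal{N}^{(3)}(\uai)[v_c,v_c,v_c]$. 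Moving $\mathcal{N}^{(1)}(\uai)v_c$ to the left and recalling the definition of $\La$, I obtain
\[
\La v_c = -(P_c(D)-P_\infty(D))\ua + \tfrac12\mathcal{N}^{(2)}(\uai)[v_c,v_c] + \tfrac16\mathcal{N}^{(3)}(\uai)[v_c,v_c,v_c].
\]
Since $\ua$ and $\uai$ are radial, so is $v_c$, and Lemma \ref{Lem:3.1} lets me invert $\La$ on the radial spaces with a gain of two derivatives: $\|v_c\|_{H^{\sigma+2}} \lesssim \|\La v_c\|_{H^\sigma}$ for all $\sigma\geq 0$.

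Next I estimate the three terms in $H^\sigma$. For the linear error I use the identity $P_c(D)-P_\infty(D)=P_{c,1}(D)$, so Lemma \ref{Lem:2.1} with $n=1$ gives $\|(P_c(D)-P_\infty(D))\ua\|_{H^\sigma}\lesssim c^{-2}\|\ua\|_{H^{\sigma+4}}$, which is $\lesssim c^{-2}$ by the uniform higher-regularity bound of Lemma \ref{Lem:2.6}. For the quadratic and cubic terms I write out $\mathcal{N}^{(2)}$ and $\mathcal{N}^{(3)}$ as sums of terms of the form $(|x|^{-1}*(ab))d$ with each factor equal to $v_c$ or $\uai$, and apply the trilinear estimate of Lemma \ref{Lem:2.2}; for $\sigma\geq\frac12$ this yields
\[
\|\mathcal{N}^{(2)}(\uai)[v_c,v_c]\|_{H^\sigma}\lesssim \|v_c\|_{H^\sigma}^2, \qquad \|\mathcal{N}^{(3)}(\uai)[v_c,v_c,v_c]\|_{H^\sigma}\lesssim \|v_c\|_{H^\sigma}^3,
\]
where the fixed norms $\|\uai\|_{H^\sigma}$ are absorbed into the constants. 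Combining, for every $\sigma\geq\frac12$,
\[
\|v_c\|_{H^{\sigma+2}} \lesssim \frac{1}{c^2} + \|v_c\|_{H^\sigma}^2 + \|v_c\|_{H^\sigma}^3 .
\]

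The final step is a bootstrap in $\sigma$. By the preceding lemma $\|v_c\|_{H^1}\to 0$, while Lemma \ref{Lem:2.6} gives $\sup_{c>1}\|v_c\|_{H^\tau}<\infty$ for every $\tau$; hence $Q_c := \|v_c\|_{H^1}+c^{-2}$ is uniformly bounded, so $Q_c^2+Q_c^3\lesssim Q_c$ for all $c>1$. I first take $\sigma=\frac12$: since $\|v_c\|_{H^{1/2}}\leq\|v_c\|_{H^1}\leq Q_c$, the display gives $\|v_c\|_{H^{5/2}}\lesssim Q_c$, hence $\|v_c\|_{H^s}\lesssim Q_c$ for all $s\leq\frac52$. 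I then induct: assuming the bound for all exponents $\leq s_0$ with $s_0\geq\frac52$, for $s\in(s_0,s_0+2]$ I set $\sigma=s-2\in(s_0-2,s_0]$, so that $\sigma>\frac12$ and $\sigma\leq s_0$; the induction hypothesis and the display then give $\|v_c\|_{H^s}\lesssim c^{-2}+Q_c^2+Q_c^3\lesssim Q_c$. This closes the induction and proves the claim for all $s>1$.

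The only delicate point is organizing the bootstrap so that the trilinear estimate is always applied at an exponent $\sigma\geq\frac12$ while covering the whole range $s>1$. The real engine of the proof is the \emph{superlinearity} of the nonlinear terms, which makes the reabsorption $Q_c^2+Q_c^3\lesssim Q_c$ possible; note this uses only the uniform boundedness of $Q_c$, not any smallness of $c^{-2}$, so the estimate in fact holds for every $c>1$.
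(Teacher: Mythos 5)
Your proof is correct and follows essentially the same route as the paper: subtract the two equations to get an equation of the form $\La v_c = \text{(linear error)} + \text{(higher-order terms)}$, estimate the linear error by Lemmas \ref{Lem:2.1} and \ref{Lem:2.6}, the nonlinear terms by Lemma \ref{Lem:2.2}, and then use the invertibility of $\La$ (Lemma \ref{Lem:3.1}) to gain two derivatives and bootstrap in $s$. The only cosmetic differences are that the paper keeps the remainder in the grouped quadratic form $2(|x|^{-1}*((\ua-\uai)\uai))(\ua-\uai)+(|x|^{-1}*((\ua-\uai)^2))\ua$ rather than fully Taylor-expanding into quadratic and cubic terms, and that you spell out the induction on the Sobolev exponent which the paper leaves implicit.
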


\begin{proof}
	Subtracting equation \eqref{1.2} from equation \eqref{1.3}, we
	obtain
    \begin{equation}\label{3.1}
        \begin{split}
			\La(\ua - \uai) & = (P_\infty(D)-P_c(D))\ua
			+2(|x|^{-1}*\left((\ua-\uai)\uai)\right)(\ua-\uai) \\
			& \quad+ \left(|x|^{-1}*((\ua-\uai)^2)\right)\ua.
        \end{split}
    \end{equation}
	It follows from Lemma \ref{Lem:2.6} and Lemma \ref{Lem:2.1} that
    \begin{equation}\label{3.2}
		\left\| P_{c,1}(D) \ua \right\|_{H^s} = \|(P_c(D)-
		P_\infty(D))\ua\|_{H^s}\lesssim \frac{1}{c^2}.
    \end{equation}
	By Lemma \ref{Lem:2.2}, there holds
    \begin{equation}\label{3.3}
		\left\|2(|x|^{-1}*\left((\ua-\uai)\uai)\right)(\ua-\uai) +
		\left(|x|^{-1}*((\ua-\uai)^2)\right)\ua\right\|_{H^s}\lesssim
		\|\ua-\uai\|_{H^s}^2.
    \end{equation}
    Taking \eqref{3.2} and \eqref{3.3} into \eqref{3.1}, we get
	\begin{equation*}
		\|\La(\ua-\uai)\|_{H^{s}}\lesssim\frac{1}{c^2}+
		\|\ua-\uai\|_{H^s}^{2}.
	\end{equation*}
	Since $\La \colon H_r^{s+2}(\mathbb{R}^3)\to H_r^s(\mathbb{R}^3) $
	is invertible, then
    \begin{align*}
		\|\ua - \uai\|_{H^{s+2}} & = \|\La^{-1}\La(\ua -
		\uai)\|_{H^{s+2}} \\
		&\lesssim \|\La(\ua - \uai)\|_{H^{s}} \lesssim \frac{1}{c^2} +
		\|\ua - \uai\|_{H^s}^{2},
    \end{align*}
    which yields
	\begin{equation*}
		\|\ua- \uai\|_{H^s} \lesssim \|\ua - \uai\|_{H^{1}} +
		\frac{1}{c^2}.
    \end{equation*}
\end{proof}

\begin{proof}[\textbf{Proof of Proposition \ref{Prop:A1}}]
	Lemma \ref{conv_0} shows the convergence of $\ua$ in $H^{s}$ for
	all $s > 0$. The uniqueness of $\ua$ can be derived from the
	uniqueness of $\uai$ and Proposition 3 in \cite{MR2561169}.
\end{proof}

\section{Asymptotic property of  action ground state}
\label{sec:asymptotic_action}
We now come to the main issues of the present paper which focuses on
asymptotic property of action ground state $\ua$ to \eqref{1.2}. 

Lemma \ref{Lem:3.1} yields there exists unique $\fa_1 \in
\bigcap\limits_{s>0} H_r^s(\mathbb{R}^3)$ which satisfy the following
equation
\begin{equation}\label{3.5}
    \La \fa_1 = -P_{\infty,1}(D) \uai.
\end{equation}
Set
\begin{equation}
	\label{eq:deffc1}
	 \fa_{c,1} = c^2(\ua - \uai).
\end{equation}
\begin{lemma}\label{Lem:3.4}
	For each $s>0$, there holds $\|\fa_{c,1} - \fa_{1} \|_{H^s} \to
	0$, as $c\to \infty.$
\end{lemma}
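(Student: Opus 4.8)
The plan is to show that $\fa_{c,1} = c^2(\ua - \uai)$ converges to the function $\fa_1$ defined by $\La \fa_1 = -P_{\infty,1}(D)\uai$. The natural strategy is to derive the equation satisfied by $\fa_{c,1}$, compare it with \eqref{3.5}, and then apply the invertibility of $\La$ via Lemma \ref{Lem:3.3}. Starting from equation \eqref{3.1} in the proof of Lemma \ref{conv_0}, I would multiply through by $c^2$. Using the definition $\fa_{c,1} = c^2(\ua - \uai)$, the left-hand side becomes $\La \fa_{c,1}$, while the right-hand side becomes
\begin{equation*}
	c^2(P_\infty(D) - P_c(D))\ua + 2\bigl(|x|^{-1}*((\ua-\uai)\uai)\bigr)\fa_{c,1} + \bigl(|x|^{-1}*((\ua-\uai)^2)\bigr)c^2\ua.
\end{equation*}

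**Analyzing the three terms.** The key observation is that $P_\infty(D) - P_c(D) = P_{c,1}(D) + \frac{1}{c^2}P_{\infty,1}(D)$ up to sign conventions; more precisely, from the definitions, $c^2(P_\infty(D) - P_c(D))\ua = -P_{\infty,1}(D)\ua + c^2 P_{c,2}(D)\ua$ after isolating the leading Taylor coefficient. By Lemma \ref{Lem:2.1} applied to $P_{c,2}$, together with the uniform bound $\sup_{c>1}\|\ua\|_{H^s} < \infty$ from Lemma \ref{Lem:2.6}, the remainder $c^2 P_{c,2}(D)\ua$ is $O(c^{-2})$ in every $H^s$, hence $o_c(1)$. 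Since $\ua \to \uai$ in every $H^s$ by Proposition \ref{Prop:A1}, the leading term $-P_{\infty,1}(D)\ua \to -P_{\infty,1}(D)\uai$, which is exactly the target right-hand side of \eqref{3.5}. For the nonlinear terms, I would use Lemma \ref{conv_0}, which gives $\|\ua - \uai\|_{H^s} \lesssim \|\ua - \uai\|_{H^1} + c^{-2} \to 0$; combined with the trilinear estimate of Lemma \ref{Lem:2.2} and the uniform bounds, the coefficient $2(|x|^{-1}*((\ua-\uai)\uai))$ has $H^s$-norm $\lesssim \|\ua - \uai\|_{H^s} = o_c(1)$, so this term contributes $o_c(1)\|\fa_{c,1}\|_{H^s}$. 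The last term $(|x|^{-1}*((\ua-\uai)^2))c^2\ua$ is $\lesssim \|\ua-\uai\|_{H^s}^2 \cdot c^2 \lesssim \|\ua-\uai\|_{H^s}\|\fa_{c,1}\|_{H^s}$, again of the form $o_c(1)\|\fa_{c,1}\|_{H^s}$.

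**Assembling the estimate and concluding.** Collecting these, I obtain
\begin{equation*}
	\La \fa_{c,1} = -P_{\infty,1}(D)\uai + h_c, \qquad \|h_c\|_{H^s} = o_c(1) + o_c(1)\|\fa_{c,1}\|_{H^s}.
\end{equation*}
Subtracting \eqref{3.5} gives $\La(\fa_{c,1} - \fa_1) = h_c$, and since $\|\fa_{c,1}\|_{H^s} \leq \|\fa_{c,1} - \fa_1\|_{H^s} + \|\fa_1\|_{H^s}$, the error bound rewrites as $\|h_c\|_{H^s} = o_c(1) + o_c(1)\|\fa_{c,1} - \fa_1\|_{H^s}$. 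Applying Lemma \ref{Lem:3.3} with the pair $(\fa_{c,1} - \fa_1,\, h_c)$ then forces $\|\fa_{c,1} - \fa_1\|_{H^s} = o_c(1)$ for each $s > 0$, which is the claim.

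**Main obstacle.** The delicate point is organizing the three right-hand-side terms so that each is genuinely absorbable by Lemma \ref{Lem:3.3}: the two nonlinear terms must be shown to produce factors of the form $o_c(1)\|\fa_{c,1}\|_{H^s}$ rather than uncontrolled $O(1)$ contributions, which requires using Lemma \ref{conv_0} to upgrade the $H^1$-convergence to $H^s$-convergence \emph{before} extracting the factor of $\fa_{c,1}$. Equally, one must correctly split the linear operator difference to see that the Taylor remainder beyond the $P_{\infty,1}$ term is truly $O(c^{-2})$ and does not contaminate the leading order; this is where Lemma \ref{Lem:2.1} for $P_{c,2}$ and the uniform higher-Sobolev bounds of Lemma \ref{Lem:2.6} are indispensable.
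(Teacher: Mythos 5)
Your proposal is correct and follows essentially the same route as the paper: multiply the difference of the two equations by $c^2$, split the linear part as $c^2P_{c,1}(D) = c^2P_{c,2}(D) + P_{\infty,1}(D)$ and control it via Lemmas \ref{Lem:2.1} and \ref{Lem:2.6}, show the quadratic nonlinear remainder is $o_c(1) + o_c(1)\|\fa_{c,1}-\fa_1\|_{H^s}$, and conclude with Lemma \ref{Lem:3.3}. The only cosmetic differences are that you estimate the remainder in its explicit convolution form via Lemma \ref{Lem:2.2} where the paper invokes the packaged Taylor estimate of Lemma \ref{Lem:2.4}, and a harmless sign slip (the remainder is $-c^2P_{c,2}(D)\ua$, not $+c^2P_{c,2}(D)\ua$), which is immaterial since the term is estimated in norm.
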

\begin{proof}
    Let's recall elliptic operator
    \begin{equation*}
		P_{c,n}(D)= \sqrt{-c^2\Delta +m^2c^4} - mc^2 - \sum_{k=1}^{n}
		\frac{(-1)^{k-1}\alpha_k}{m^{2k-1}c^{2k-2}}(-\Delta)^k ,
    \end{equation*}
    \begin{equation*}
		P_{\infty,n}(D) :=
		\frac{(-1)^{n}\alpha_{n+1}}{m^{2n+1}}(-\Delta)^{n+1},
    \end{equation*}
    and
    \begin{equation*}
		P_{c}(D) := \sqrt{-c^2\Delta + m^2c^4} - mc^2.
    \end{equation*}
    Then
    \begin{equation*}
        P_{c,n+1}(D) = P_{c,n}(D) - \frac{1}{c^{2n}}P_{\infty, n}(D).
    \end{equation*}
    From  \eqref{1.2} and \eqref{1.3}, we have
    \begin{equation}
        \begin{split}
			-P_{c,1}(D) \ua = \mathcal{N}(\uai) - \mathcal{N}(\ua)+ (
			P_{\infty}(D) + \lambda )(\ua - \uai).
        \end{split}
    \end{equation}
    Multiply both sides of the equation by $c^2$, then
    \begin{equation}\label{3.7}
        \begin{split}
			-c^2P_{c,1}(D) \ua = \La \fa_{c,1} - \Ga_{c,1},
        \end{split}
    \end{equation}
    where
	\begin{equation}
		\label{eq:defGc1}
		\Ga_{c,1} = c^2\left( \mathcal{N}(\uai +
		\frac{1}{c^2}\fa_{c,1}) - \mathcal{N}(\uai)-
		\mathcal{N}^{(1)}(\uai) \left[\frac{1}{c^2}
		\fa_{c,1}\right]\right).
	\end{equation}
    By combining \eqref{3.5} with \eqref{3.7}, we have
    \begin{equation}
		\label{3.8}
		c^2P_{c,1}(D) \ua - P_{\infty,1}(D) \uai = \La(\fa_1 -
		\fa_{c,1}) + \Ga_{c,1}.
    \end{equation}
    By Lemma \ref{Lem:2.4}, there holds
    \begin{equation}
		\label{eq:estimateGc1}
		\|\Ga_{c,1}\|_{H^s} \lesssim c^2 \left \|\frac{\fa_{c,1}}{c^2}
		\right\|_{H^s}^2 = o_{c}(1) \|\fa_{c,1}\|_{H^s} \leq o_{c}(1)
		+ o_{c}(1) \|\fa_{c,1} - \fa_{1}\|_{H^s}.
    \end{equation}
    By Lemma \ref{Lem:2.1} and Lemma \ref{Lem:2.6}, we obtain
    \begin{equation}
		\label{eq:estimatenorm}
        \begin{split}
			\left\|c^2P_{c,1}(D) \ua - P_{\infty,1}(D) \uai
			\right\|_{H^s} & \lesssim \|c^2P_{c,2}(D) \ua \|_{H^s} +
			\|P_{\infty,1}(D)(\ua - \uai)\|_{H^s} \\
			 & \lesssim \frac{1}{c^2}\| \ua\|_{H^{s+6}} +\|\ua -
			 \uai\|_{H^{s+4}} \\
			 & = o_{c}(1),\quad \text{as}\,\,\, c\to \infty,
        \end{split}
    \end{equation}
	then, applying Lemma \ref{Lem:3.3} to \eqref{3.8}, we conclude
	that $\|\fa_{c,1} - \fa_{1}\|_{H^s} = o_{c}(1)$.
\end{proof}

\begin{lemma}
	\label{lem:existencefj}
	For each $j \in \mathbb{N} \cup \{0\}$ there exists a unique
	radial function $\fa_{j} \in \bigcap_{s > 0}
	H^{s}_{r}(\mathbb{R}^3)$ such that for all $n \in \mathbb{N}$ and
	$s > 0$
    \begin{equation}
		\label{3.11}
        \left\|c^{2n}\ua - \sum_{j=0}^{n}c^{2(n-j)} \fa_{j}\right\|_{H^s} \to 0 
		\qquad\text{as } c \to +\infty.
    \end{equation}
	
	In particular $\fa_{0} = \uai$ and $\fa_{k}$ $(k \geq 1)$ is
	the unique radial solution of the following equation
    \begin{equation}
		\label{eq:deffk}
		\La \fa_k = - \sum_{j=0}^{k-1} P_{\infty, k-j}(D) \fa_{ j} +
		\mathcal{T}_k(\fa_0, \fa_{1}, \cdots, \fa_{k-1}),
    \end{equation}
    where
    \begin{equation}\label{3.12}
        \begin{split}
			\mathcal{T}_{k}(\fa_0, \fa_{1}, \fa_{2},\cdots, \fa_{k-1})
			=\sum_{j=2}^{\min\{k,3\}}\frac{1}{j!}
			\sum_{\substack{i_1+i_2+\cdots+i_j = k\\1\leq i_1,
			i_2, \cdots, i_j \leq k-1}}
			\mathcal{N}^{(j)}(\fa_0)[\fa_{i_1}, \fa_{i_2},\cdots, \fa_{i_j}].
        \end{split}      
    \end{equation}
	is nonlinearity of the Hartree-type respect to $\fa_0,
	\fa_{1},\fa_{2},\cdots,\fa_{k-1}$ and $\mathcal{T}_{1}=0$.
\end{lemma}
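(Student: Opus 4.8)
The plan is to prove the two assertions of the lemma — existence and uniqueness of the profiles $\fa_k$, and the asymptotic expansion \eqref{3.11} — separately, both by induction on the order. For the profiles I would argue by strong induction on $k$, starting from $\fa_0=\uai\in\bigcap_{s>0}H^s_r$ (regularity of the non-relativistic ground state). Assuming $\fa_0,\dots,\fa_{k-1}\in\bigcap_{s>0}H^s_r$ have been built, the right-hand side of \eqref{eq:deffk} again lies in $\bigcap_{s>0}H^s_r$: each $P_{\infty,k-j}(D)\fa_j$ is a constant-coefficient differential operator applied to a function in every $H^s$, and each summand of $\mathcal{T}_k$ is a Hartree-type multilinear expression in the $\fa_i$, controlled in every $H^s_r$ by the trilinear estimate of Lemma \ref{Lem:2.2}; radial symmetry is preserved throughout. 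Since $\La\colon H^{s+2}_r\to H^s_r$ is invertible (Lemma \ref{Lem:3.1}), setting $\fa_k=\La^{-1}(\text{RHS})$ gives a unique radial solution in $\bigcap_{s>0}H^{s+2}_r=\bigcap_{s>0}H^s_r$.

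For the expansion I would induct on $n$. The case $n=0$ is the $H^s$-convergence $\ua\to\uai$ of Proposition \ref{Prop:A1}, and $n=1$ is Lemma \ref{Lem:3.4}; note also that \eqref{3.11} at level $n-1$ automatically implies it at every lower order, since the discarded tail $\sum_{j=m+1}^{n-1}c^{-2j}\fa_j$ is $O(c^{-2(m+1)})$. Writing $U_m=\sum_{j=0}^{m}c^{-2j}\fa_j$, I would set $\fa_{c,n}=c^{2n}(\ua-U_{n-1})$, generalizing \eqref{eq:deffc1}, and aim to show $\|\fa_{c,n}-\fa_n\|_{H^s}\to 0$, which is exactly \eqref{3.11} at order $n$. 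Expanding $P_c(D)=P_\infty(D)+\sum_{j=1}^n c^{-2j}P_{\infty,j}(D)+P_{c,n+1}(D)$ in equation \eqref{1.2}, subtracting the equation $(P_\infty(D)+\lambda)\uai=\mathcal{N}(\uai)$, and using the exact finite Taylor expansion of $\mathcal{N}$ around $\uai$ (Lemma \ref{Lem:2.4}), I obtain the master identity
\[
	\La(\ua-\uai)+\sum_{j=1}^n c^{-2j}P_{\infty,j}(D)\ua+P_{c,n+1}(D)\ua = \mathcal{N}_{\geq 2}(\ua),
\]
where $\mathcal{N}_{\geq 2}(\ua)=\mathcal{N}(\ua)-\mathcal{N}(\uai)-\mathcal{N}^{(1)}(\uai)[\ua-\uai]$ is the quadratic-and-cubic remainder. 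Multiplying by $c^{2n}$, using $c^{2n}(\ua-\uai)=\fa_{c,n}+\sum_{j=1}^{n-1}c^{2(n-j)}\fa_j$ together with the recursion \eqref{eq:deffk} for $\fa_1,\dots,\fa_{n-1}$, isolates $\La\fa_{c,n}$ as a sum of four groups of terms.

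I would then estimate these groups. The relativistic group $-\sum_{j=1}^n c^{2(n-j)}P_{\infty,j}(D)\ua$ is treated as in the base case: writing $c^{2(n-j)}\ua=\sum_{i=0}^{n-j}c^{2(n-j-i)}\fa_i+c^{2(n-j)}(\ua-U_{n-j})$ and invoking the inductive hypothesis at order $n-j\le n-1$, the remainder $P_{\infty,j}(D)[c^{2(n-j)}(\ua-U_{n-j})]$ is $o_c(1)$ in every $H^s$. The tail term $c^{2n}P_{c,n+1}(D)\ua$ is $O(c^{-2})$ by Lemma \ref{Lem:2.1} and the uniform bounds $\sup_{c>1}\|\ua\|_{H^t}<\infty$ of Lemma \ref{Lem:2.6}. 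Expanding $\mathcal{N}_{\geq 2}(\ua)$ multilinearly in $\ua-\uai=\sum_{i=1}^{n-1}c^{-2i}\fa_i+c^{-2n}\fa_{c,n}$ splits it into pure-profile contributions and contributions carrying at least one factor $\fa_{c,n}$; the latter acquire a prefactor $c^{-2i}$ with $i\ge1$, hence are $o_c(1)+o_c(1)\|\fa_{c,n}-\fa_n\|_{H^s}$. The decisive bookkeeping is that, after reindexing, every surviving positive power $c^{2(n-l)}$ with $1\le l\le n-1$ cancels: the relativistic contribution $-\sum_{i<l}P_{\infty,l-i}(D)\fa_i$ is cancelled by the $\La\fa_l$ terms, while the quadratic/cubic contributions reproduce and cancel $\mathcal{T}_l$; and the coefficient of $c^0$ assembles exactly into $-\sum_{i=0}^{n-1}P_{\infty,n-i}(D)\fa_i+\mathcal{T}_n=\La\fa_n$ by \eqref{eq:deffk}. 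Thus $\La(\fa_{c,n}-\fa_n)=o_c(1)+o_c(1)\|\fa_{c,n}-\fa_n\|_{H^s}$, and Lemma \ref{Lem:3.3} forces $\|\fa_{c,n}-\fa_n\|_{H^s}\to 0$, closing the induction.

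The hard part is precisely this combinatorial cancellation together with controlling the relativistic corrections without losing derivatives. A naive bound on $c^{2(n-j)}P_{\infty,j}(D)(\ua-\uai)$ would put a higher Sobolev norm of the unknown difference on the right, which Lemma \ref{Lem:3.3} cannot absorb; the remedy, mirroring \eqref{eq:estimatenorm} in the base case, is to peel off the profiles $\fa_0,\dots,\fa_{n-j}$ via the inductive hypothesis and apply the crude operator and uniform-bound estimates only to $\ua$ itself, which is bounded in every $H^s$, rather than to the difference. Verifying that all intermediate powers of $c$ cancel and that the $c^0$ coefficient matches the right-hand side of \eqref{eq:deffk} is routine but lengthy, and is where the recursive definitions of $\fa_k$ and of $\mathcal{T}_k$ enter essentially.
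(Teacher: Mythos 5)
Your proposal is correct and follows essentially the same route as the paper: construct the profiles $\fa_k$ recursively via the invertibility of $\La$ (Lemma \ref{Lem:3.1}), induct on $n$ with $\fa_{c,n}=c^{2n}\bigl(\ua-\sum_{j=0}^{n-1}c^{-2j}\fa_j\bigr)$, estimate the operator tail by Lemmas \ref{Lem:2.1} and \ref{Lem:2.6}, match the multilinear expansion of the nonlinearity against $\sum_j c^{-2j}\mathcal{T}_j$ by the same reindexing cancellation, and conclude with Lemma \ref{Lem:3.3}. The only difference is organizational: you derive a single master identity from \eqref{1.2}, \eqref{1.3} and multiply by $c^{2n}$, whereas the paper encodes the same algebra through the recursive quantities $\fa_{c,k}=c^2(\fa_{c,k-1}-\fa_{k-1})$ and $\Ga_{c,k}=c^2(\Ga_{c,k-1}-\mathcal{T}_{k-1})$; unwinding that recursion (as in \eqref{3.13}) reproduces your identity.
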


\begin{proof}	
	Let $\fa_{c,0} = \ua$ and $\fa_{0} = \uai$. We will construct
	successive, higher order (in $\frac{1}{c^{2}}$) approximations of
	$\ua$, which will be denoted by $\fa_{n}$. The difference between
	$\ua$ and its approximation
	$\sum_{j=0}^{n-1}\frac{\fa_j}{c^{2j}}$, once rescaled by $c^{2n}$,
	will be denoted by $\fa_{c,n}$. The proof will be carried out by
	induction on the order of approximation $n$.
	
	We start by defining the approximation terms. Starting with $\fa_{0}
	= \uai$ and assuming we have found $\fa_{j} \in \bigcap_{s >
	0} H^{s}_{r}(\mathbb{R}^{3})$ for $j = 0, \ldots, k-1$ satisfying
	\eqref{eq:deffk} we deduce from Lemma \ref{Lem:3.1} that there
	exists a unique solution $\fa_{k} \in \bigcap_{s > 0}
	H^{s}_{r}(\mathbb{R}^{3})$ to \eqref{eq:deffk}.
		
	We now prove by induction on $n$ that \eqref{3.11} holds. 
	
	Since $\fa_{0} = \uai$ and \eqref{eq:deffk} reduces to
	\eqref{3.5}, we have that $c^{2}\ua - c^{2}\fa_{0} - \fa_{1} =
	\fa_{c,1} - \fa_{1}$ and Lemma \ref{Lem:3.4} implies that
	\eqref{3.11} holds for $n=1$.

	Assuming that for each $1 \leq k \leq n-1$ and for all $s
	> 0$
    \begin{equation*}
		\|c^{2k} \ua - \sum_{j=0}^{k}c^{2(k-j)}\fa_{j} \|_{H^s}\to 0.
    \end{equation*}
	we will show that \eqref{3.11} holds for $n$.

	Recalling the definitions \eqref{eq:deffc1} and \eqref{eq:defGc1}
	of $\fa_{c,1}$ and $\Ga_{c,1}$, we set for each $k \geq 2$
	\begin{equation*}
		\fa_{c,k} = c^2(\fa_{c, k-1} - \fa_{k-1}) = c^{2k} \ua -
		\sum_{j=0}^{k-1} c^{2(k-j)} \fa_{j} = c^{2k} \left( \ua -
		\sum_{j=0}^{k-1}\frac{\fa_j}{c^{2j}} \right)
	\end{equation*}
    and
    \begin{equation*}
		\Ga_{c, k}= c^2\left(\Ga_{c, k-1} - \mathcal{T}_{ k-1}( \fa_0,
		\fa_{1}, \fa_{2}, \cdots, \fa_{k-2})\right).
    \end{equation*}
	With these positions, we have that our induction assumption is 
	that for all $k = 0, \ldots, n-1$ and for all $s > 0$
	\begin{equation*}
		\|\fa_{c, k} - \fa_k\|_{H^s} \to 0 \quad \text{as}\quad c\to 
		\infty.
	\end{equation*}

	It follows from \eqref{3.7} and \eqref{3.5} that $\fa_{c,1}$,
	$\fa_{1}$ satisfies
    \begin{align*}
		&\La \fa_{c,1} = -c^2P_{c,1}(D)\ua + G_{c,1}, \\
		&\La  \fa_1 = -P_{\infty,1}(D) \uai + \mathcal{T}_0,
    \end{align*}
    and, by induction, we deduce that $\fa_{c, k}$ satisfies
    \begin{equation}
		\label{3.14}
		\La \fa_{c, k} = c^2 \left[ -\sum_{j=0}^{k-2}
		c^{2k-2j-2} P_{c,k-j}(D) \fa_{j}- P_{c,1}(D) \fa_{c,k-1}
		\right] + \Ga_{c, k}.
    \end{equation}
    Subtract \eqref{eq:deffk} from equation \eqref{3.14},
    we get
    \begin{multline}
		\La (\fa_{c, n} - \fa_n) = c^2 \left[ -\sum_{j=0}^{n-2}
		c^{2n-2j-2} P_{c,n-j}(D) \fa_{j} - P_{c,1}(D) \fa_{c,n-1}
		\right] \\
		+ \sum_{j=0}^{n-1} P_{\infty, n-j}(D) \fa_{ j} +
		\left(\Ga_{c, n} - \mathcal{T}_{ n}(\fa_0, \fa_{1}, \fa_{2},
		\cdots, \fa_{n-1}) \right).
    \end{multline}
	We claim that, as $c \to +\infty$
    \begin{equation}\label{3.17}
		\| \Ga_{c, n} - \mathcal{T}_{n}(\fa_0, \fa_{1}, \fa_{2},
		\cdots, \fa_{n-1}) \|_{H^s} = o_{c}(1)
	\end{equation} 
	and
    \begin{equation}\label{3.18}
		\left\|c^2\left( -\sum_{j=0}^{n-2} c^{2n-2j-2}
		P_{c,n-j}(D) \fa_{j} - P_{c,1}(D) \fa_{c,n-1} \right) +
		\sum_{j=0}^{n-1} P_{\infty, n-j}(D) \fa_{ j}
		\right\|_{H^s} = o_{c}(1).
    \end{equation}
	We defer the verification of those claims to the end of the proof.
	By combining Lemma \ref{Lem:3.3} with claim \eqref{3.17}, claim
	\eqref{3.18}, we have
	\begin{equation*}
		\|\fa_{c, n} - \fa_n\|_{H^s} \to 0 \quad \text{as}\quad c\to
		\infty,
	\end{equation*}
    that is
    \begin{equation*}
		\|c^{2n} \ua- \sum_{j=0}^{n}c^{2(n-j)} \fa_{j}\|_{H^s}\to 0.
    \end{equation*}

	\subsection*{Proof of the Claim \eqref{3.17}} From
    \begin{equation}\label{3.13}
        \begin{split}
			\Ga_{c, k} & = c^{2k} \left(\frac{\Ga_{c,1}}{c^2} -
			\sum_{j=2}^{k-1} \frac{\mathcal{T}_{ j}(\fa_0, \fa_{1},
			\fa_{2}, \cdots, \fa_{j-1})}{c^{2j}} \right) \\
			& = c^{2k}\left(\mathcal{N}(\ua) - \mathcal{N}(\fa_0)-
			\mathcal{N}^{(1)}(\fa_0)[\ua - \fa_0]- \sum_{j=2}^{k-1}
			\frac{\mathcal{T}_{ j}(\fa_0, \fa_{1}, \fa_{2}, \cdots,
			\fa_{j-1})}{c^{2j}} \right).
        \end{split}
    \end{equation}
	and the triangle inequality, we obtain
	\begin{equation}\label{3.19}
		\begin{split}
			& \|\Ga_{c, n} - \mathcal{T}_{ n}(\fa_0, \fa_{1}, \fa_{2},
			\cdots, \fa_{n-1}) \|_{H^s} \\
			&= c^{2n} \left\|\mathcal{N} \left( \ua \right) -
			\mathcal{N}(\fa_0) - \mathcal{N}^{(1)}(\fa_0) \left[ \ua
			-\fa_0 \right] - \sum_{j=2}^{n} \frac{\mathcal{T}_{
			j}(\fa_0, \fa_{1}, \fa_{2}, \cdots, \fa_{j-1})}{c^{2j}}
			\right\|_{H^s} \\
			& \leq c^{2n} \left\| \mathcal{N} \left( \ua \right)
			- \mathcal{N}(f_0) - \mathcal{N}^{(1)}(\fa_0) \left[\ua -
			\fa_0\right] - \sum_{j=2}^{\min\{n,3\}} \frac{1}{j!}
			\mathcal{N}^{(j)}(\fa_0)[ \underbrace{\ua - \fa_0, \cdots,
			\ua - \fa_0}_{\text{$j$ times}} ] \right\|_{H^s} \\
			&\quad + c^{2n} \left\| \sum_{j=2}^{\min\{n,3\}}
			\frac{1}{j!}\mathcal{N}^{(j)}(\fa_0)[\underbrace{\ua -
			\fa_0, \cdots, \ua - \fa_0}_{\text{$j$ times}}] -
			\sum_{j=2}^{n} \frac{\mathcal{T}_{ j}(\fa_0, \fa_{1},
			\fa_{2}, \cdots, \fa_{j-1})}{c^{2j}} \right\|_{H^s}\\
			&=: I_1 + I_2.
		\end{split}
	\end{equation}
    By Lemma \ref{Lem:2.4}, $I_1=0$ provided $n\geq 3$. For $n=2$,
    Lemma \ref{Lem:3.4} implies
    \begin{equation}\label{3.20}
		\norm{\fa_{c,0} - \fa_{0}}_{H^{s}} = \|\ua - \fa_0\|_{H^s}
		\lesssim \frac{1}{c^2},
    \end{equation}
    by combining Lemma \ref{Lem:2.4} with \eqref{3.20}, we obtain
	\begin{equation}\label{3.21}
		\begin{split}
			\qquad\quad I_1 = c^{4} & \left\| \mathcal{N} \left( \ua
			\right) - \mathcal{N}(\fa_0) - \mathcal{N}^{(1)}(\fa_0)
			\left[ \ua - \fa_0 \right] -\frac{1}{2}
			\mathcal{N}^{(2)}(\fa_0)[ \ua - \fa_0, \ua - \fa_0]
			\right\|_{H^s}\\
			&\leq C c^{4}\| \ua - \fa_0 \|_{H^s}^{3}\lesssim
			\frac{1}{c^2}.
        \end{split}
    \end{equation}
	From the assumption: $\| \fa_{c, n-1} - \fa_{n-1} \|_{H^s} =
	o_{c}(1)$, we have
	\begin{equation*}
		\ua - \fa_0 = \frac{\fa_{c, n-1}}{c^{2(n-1)}} +
		\sum_{i=1}^{n-2} \frac{\fa_i}{c^{2i}} = \sum_{i=1}^{n-1}
		\frac{\fa_i}{c^{2i}} + o_{c}(\frac{1}{c^{2n-2}}).
	\end{equation*}
    Consequently,
    \begin{equation}
        \begin{split}
			 & \mathcal{N}^{(j)}(f_0)[\underbrace{\ua -
			 \fa_0,\cdots,\ua - \fa_0}_{j \,\,\text{times}}] \\
			& \quad= \sum_{\substack{1\leq i_1, i_2, \cdots, i_j\leq
			n-1}}\frac{1}{c^{2(i_1+ i_2+\cdots +
			i_j)}}\mathcal{N}^{(j)}(\fa_0)[\fa_{i_1},
			\fa_{i_2},\cdots, \fa_{i_j}] + o_{c}(\frac{1}{c^{2n}}) \\
             & \quad
            =\sum_{\substack{i_1+ i_2+ \cdots+i_j\leq n                                                                                                                                  \\
			1\leq i_1, i_2, \cdots, i_j\leq n-1}}\frac{1}{c^{2(i_1+
			i_2+\cdots + i_j)}}\mathcal{N}^{(j)}(\fa_0)[\fa_{i_1},
			\fa_{i_2},\cdots, \fa_{i_j}] + o_{c}(\frac{1}{c^{2n}}),
        \end{split}
    \end{equation}
    and
    \begin{equation}\label{3.23}
        \begin{split}
			\qquad & \sum_{j=2}^{\min\{n,3\}} \frac{1}{j!}
			\mathcal{N}^{(j)}(\fa_0)[\underbrace{\ua -
			\fa_0,\cdots,\ua - \fa_0}_{j \,\,\text{times}}] \\
			 & \quad=
			 \sum_{j=2}^{\min\{n,3\}}\frac{1}{j!}\sum_{\substack{i_1+
			 i_2+ \cdots+i_j\leq n \\1\leq i_1, i_2, \cdots, i_j\leq
			 n-1}}\frac{1}{c^{2(i_1+ i_2+\cdots +
			 i_j)}}\mathcal{N}^{(j)}(\fa_0)[\fa_{i_1},
			 \fa_{i_2},\cdots, \fa_{i_j}] + o_{c}(\frac{1}{c^{2n}}).
        \end{split}
    \end{equation}
    Then, from \eqref{3.12} and the fact that
	\begin{equation*}
		\sum_{j=2}^{n}\sum_{k=2}^{\min\{j,3\}}a_{k,j}=
		\sum_{k=2}^{\min\{n,3\}}\sum_{j=k}^na_{k,j},
	\end{equation*}
    we get
    \begin{equation}\label{3.24}
		\begin{split}
			\sum_{j=2}^{n}\frac{\mathcal{T}_{ j}(\fa_0,
			\fa_{1},\fa_{2},\cdots,\fa_{j-1})}{c^{2j}} & =
			\sum_{j=2}^{n} \sum_{k=2}^{\min\{j,3\}} \frac{1}{k!}
			\frac{1}{c^{2j}} \sum_{\substack{i_1+i_2+\cdots+i_k=j
			\\i_1, i_2, \cdots, i_k \geq 1}} \mathcal{N}^{(k)}(\fa_0)
			[\fa_{i_1}, \fa_{i_2},\cdots, \fa_{i_k}] \\
			& = \sum_{k=2}^{\min\{n,3\}} \sum_{j=k}^n \frac{1}{k!}
			\frac{1}{c^{2j}} \sum_{\substack{i_1+i_2+\cdots+i_k=j \\
			i_1, i_2, \cdots, i_k \geq 1}} \mathcal{N}^{(k)}(\fa_0)
			[\fa_{i_1}, \fa_{i_2},\cdots, \fa_{i_k}] \\
			& = \sum_{j=2}^{\min\{n,3\}} \frac{1}{j!}
			\sum_{\substack{i_1+ i_2+ \cdots+i_j\leq n \\ 1\leq i_1,
			i_2, \cdots, i_j\leq n-1}}\frac{1}{c^{2(i_1+ i_2+\cdots +
			i_j)}}\mathcal{N}^{(j)}(\fa_0)[\fa_{i_1},
			\fa_{i_2},\cdots, \fa_{i_j}].
        \end{split}
    \end{equation}
    Combining \eqref{3.23} with \eqref{3.24}, there holds
    \begin{equation}\label{3.25}
		c^{2n}\left\| \sum_{j=2}^{\min\{n,3\}} \frac{1}{j!}
		\mathcal{N}^{(j)}(\fa_0)[\underbrace{\ua - \fa_0,\cdots,\ua -
		\fa_0}_{j \,\,\text{times}}] -
		\sum_{j=2}^{n}\frac{\mathcal{T}_{ j}(\fa_0,
		\fa_{1},\fa_{2},\cdots,\fa_{j-1})}{c^{2j}} \right\|_{H^s} =
		o_{c}(1).
    \end{equation}
    By taking \eqref{3.25}, \eqref{3.21} into \eqref{3.19}, we get
    \begin{equation}\label{jjj_1}
		\|G_{c, n} - \mathcal{T}_{ n}(\fa_0,
		\fa_{1},\fa_{2},\cdots,\fa_{n-1})\|_{H^s} \to 0, \quad
		\text{as}\quad c\to \infty.
    \end{equation}
    which yields the Claim \eqref{3.17} holds.

    \subsection*{Proof of the Claim \eqref{3.18}}
    Since
    \begin{equation*}
        \begin{split}
			c^2 & \left(
			-\sum_{j=0}^{n-2}c^{2n-2j-2}P_{c,n-j}(D)\fa_{j}-
			P_{c,1}(D)\fa_{c,n-1}
			\right)+\sum_{j=0}^{n-1}P_{\infty, n-j}(D)\fa_{ j}
			\\
			& = -\sum_{j=0}^{n-1} c^{2n-2j} P_{c,n-j+1}(D) \fa_{j}-
			c^2P_{c,1}(D)(\fa_{c, n-1}-\fa_{n-1}),
        \end{split}
    \end{equation*}
    then by Lemma \ref{Lem:2.1},
    \begin{equation}
        \begin{split}
			 & \left\|c^2\left( - \sum_{j=0}^{n-2} c^{2n-2j-2}
			 P_{c,n-j}(D) \fa_{j}- P_{c,1}(D)\fa_{c,n-1}
			 \right)+\sum_{j=0}^{n-1}P_{\infty, n-j}(D)\fa_{
			 j}\right\|_{H^s} \\
			 & \quad \lesssim \sum_{j=0}^{n-1}c^{2n-2j}
			 \|P_{c,n-j+1}(D)\fa_{j}\|_{H^s}+ c^2 \|P_{c,1}(D)(\fa_{c,
			 n-1}-\fa_{n-1})\|_{H^s} \\
			 & \quad\lesssim \frac{1}{c^2} \sum_{j=0}^{n-1}
			 \|\fa_j\|_{H^{s+ 2(n-j+2)}}+ \|\fa_{c,
			 n-1}-\fa_{n-1}\|_{H^{s+4}} \\
			 & \quad = o_{c}(1), \quad \text{as}\quad c\to
			 \infty.
        \end{split}
    \end{equation}
    Hence, the Claim \eqref{3.18} is verified.

\end{proof}

\section{Asymptotic property of  energy ground state}
In this section, we focus on asymptotic property of energy ground
state $\ue$ to \eqref{1.8}. Denote the unique positive radial energy
ground state to \eqref{1.9} by $\uei $. Then $\ue$, $\uei $
satisfy \eqref{1.10} (resp., \eqref{1.11}) with Lagrange multiplier
$\omega_c$ ($\omega_\infty$) $\in \mathbb{R}$.

The linearized operator of \eqref{1.11} at $\uei $ is given by
\begin{equation*}
    \begin{split}
		\Le u: & = \left(P_\infty(D)+ \omega_\infty\right)u-
		\mathcal{N}^{(1)}(\uei )u \\
		& = -\frac{1}{2m}\Delta u +\omega_\infty u-
		\left(|x|^{-1}*\uei ^2\right)u-2\left(|x|^{-1}*(\uei
		u)\right)\uei .
    \end{split}
\end{equation*}
Thanks to the non-degeneracy of $\uei$, Lemma \ref{Lem:3.1} and Lemma
\ref{Lem:3.3} still holds for $\Le$.

We know from Theorem \ref{thm:energyGS} that $\ue  \in H^{s}$ for all
$s \geq 0$ and that $\ue  \to \uei $ in $H^{1}(\mathbb{R}^{3})$
as $c \to +\infty$.

As in Section \ref{sec:action} we can show that $\ue  \to \uei $
in $H^{s}$ for all $s \geq 0$, that is also for the energy ground
states $\ue $ Lemma \ref{conv_0} holds:
\begin{lemma}
	\label{conv_energy}
	For all $s \geq 0$,
	\begin{equation*}
		\|\ue  - \uei \|_{H^s} \to 0.
    \end{equation*}
\end{lemma}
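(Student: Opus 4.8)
The plan is to mirror the proof of Lemma \ref{conv_0}: subtract the two Euler--Lagrange equations \eqref{1.10} and \eqref{1.11}, recast the difference through the linearized operator $\Le$, and then combine the invertibility of $\Le$ with a bootstrap in the Sobolev index. The one genuinely new feature relative to the action setting is that the Lagrange multipliers $\omega_c$ and $\omega_\infty$ are not prescribed; their difference contributes an extra term whose smallness must come from the convergence $\omega_c \to \omega_\infty$ already recorded in Theorem \ref{thm:energyGS}.

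First I would establish the uniform bound $\sup_{c > c_0}\|\ue\|_{H^s} < \infty$ for every $s > 0$. This is the verbatim analogue of Lemma \ref{Lem:2.6}: acting with $(-\Delta)^s\sqrt{-c^2\Delta + m^2c^4}$ on \eqref{1.10} and invoking the trilinear estimate of Lemma \ref{Lem:2.2}, the argument goes through unchanged once one notes that the fixed constant $\lambda$ is here replaced by $\omega_c$, which is uniformly bounded for $c > c_0$ precisely because $\omega_c \to \omega_\infty$.

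Next, subtracting \eqref{1.11} from \eqref{1.10} and adding and subtracting $P_\infty(D)\ue$, $\omega_\infty \ue$ and $\mathcal{N}^{(1)}(\uei)[\ue - \uei]$, one is led to
\begin{equation*}
	\Le(\ue - \uei) = (P_\infty(D) - P_c(D))\ue - (\omega_c - \omega_\infty)\ue + \Ge_c,
\end{equation*}
where $\Ge_c = \mathcal{N}(\ue) - \mathcal{N}(\uei) - \mathcal{N}^{(1)}(\uei)[\ue - \uei]$ is the quadratic Taylor remainder of the Hartree nonlinearity. The three terms on the right are estimated separately: Lemma \ref{Lem:2.1} together with the uniform bound gives $\|(P_\infty(D) - P_c(D))\ue\|_{H^s} = \|P_{c,1}(D)\ue\|_{H^s} \lesssim c^{-2}\|\ue\|_{H^{s+4}} \lesssim c^{-2}$; Lemma \ref{Lem:2.4} (or directly Lemma \ref{Lem:2.2}) gives $\|\Ge_c\|_{H^s} \lesssim \|\ue - \uei\|_{H^s}^2$; and the multiplier term obeys $\|(\omega_c - \omega_\infty)\ue\|_{H^s} \lesssim |\omega_c - \omega_\infty|\,\|\ue\|_{H^s} \lesssim |\omega_c - \omega_\infty|$.

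Since the non-degeneracy of $\uei$ makes $\Le \colon H_r^{s+2} \to H_r^s$ invertible (Lemma \ref{Lem:3.1} applied to $\Le$), applying $\Le^{-1}$ yields
\begin{equation*}
	\|\ue - \uei\|_{H^{s+2}} \lesssim \frac{1}{c^2} + |\omega_c - \omega_\infty| + \|\ue - \uei\|_{H^s}^2.
\end{equation*}
Starting from $\|\ue - \uei\|_{H^1} \to 0$ and $\omega_c \to \omega_\infty$, and using the uniform $H^s$ bound to absorb one factor of the quadratic term, I would bootstrap this estimate in steps of two in the Sobolev index to conclude $\|\ue - \uei\|_{H^s} \lesssim c^{-2} + |\omega_c - \omega_\infty| + \|\ue - \uei\|_{H^1} \to 0$ for every $s$. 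The step requiring the most care is the control of the multiplier term $(\omega_c - \omega_\infty)\ue$: unlike in the action problem, where $\lambda$ is fixed, here its smallness rests entirely on the convergence $\omega_c \to \omega_\infty$, the quantitative refinement of which is exactly what the subsequent analysis of $\omega_c$ will supply.
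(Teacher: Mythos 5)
Your proposal is correct and follows essentially the same route as the paper: subtract \eqref{1.10} from \eqref{1.11} to get an equation for $\Le(\ue - \uei)$ whose right-hand side consists of $(P_\infty(D)-P_c(D))\ue$, the multiplier term $(\omega_\infty-\omega_c)\ue$, and the quadratic Hartree remainder, then estimate these via Lemma \ref{Lem:2.1}, the convergence $\omega_c \to \omega_\infty$, and Lemma \ref{Lem:2.2}, and close with the invertibility of $\Le$ together with $\ue \to \uei$ in $H^1$. Your additional remark that the uniform $H^s$ bound of Lemma \ref{Lem:2.6} carries over because $\omega_c$ is uniformly bounded is exactly the justification the paper leaves implicit.
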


\begin{proof}
	We proceed as in Lemma \ref{conv_0}: subtracting equation
	\eqref{1.10} from equation \eqref{1.11}, we obtain
    \begin{multline}
		\label{eq:5.1}
		\Le (\ue -\uei ) = (P_\infty(D)-P_c(D))\ue  +
		(\omega_{\infty} - \omega_{c})\ue  \\
	+ 2(|x|^{-1}*\left((\ue  - \uei )\uei )\right)(\ue -\uei )
	+ \left(|x|^{-1}*((\ue -\uei )^2)\right)\ue . \end{multline}
	It follows from Lemma \ref{Lem:2.6} (which holds also for energy
	ground states) and Lemma \ref{Lem:2.1} that
    \begin{equation}\label{eq:5.2}
		\left\| P_{c,1}(D) \ue  \right\|_{H^s} = \|(P_c(D)-
		P_\infty(D))\ue \|_{H^s} \lesssim \frac{1}{c^2} \quad \text{
		and } \quad \abs{\omega_{\infty} - \omega_{c}}
		\norm{\ue }_{H^{s}} \lesssim \abs{\omega_{\infty} -
		\omega_{c}}.
    \end{equation}
    By Lemma \ref{Lem:2.2}, there holds
    \begin{equation}\label{eq:5.3}
		\left\|2(|x|^{-1}*\left((\ue -\uei )\uei )\right)(\ue -\uei )
		+
		\left(|x|^{-1}*((\ue -\uei )^2)\right)\ue \right\|_{H^s}\lesssim
		\|\ue -\uei \|_{H^s}^2.
    \end{equation}
    Taking \eqref{eq:5.2} and \eqref{eq:5.3} into \eqref{eq:5.1}, we get
	\begin{equation*}
		\|\Le (\ue -\uei )\|_{H^{s}} \lesssim \frac{1}{c^2}
		+ \abs{\omega_{\infty} - \omega_{c}} +
		\|\ue -\uei \|_{H^s}^{2}.
	\end{equation*}
	Since $\Le : H_r^{s+2}(\mathbb{R}^3)\to H_r^s(\mathbb{R}^3)
	$ is invertible, then
    \begin{align*}
		\|\ue -\uei \|_{H^{s+2}} &
		=\|\Le ^{-1}\Le (\ue -\uei )\|_{H^{s+2}} \\
		&\lesssim \|\Le (\ue -\uei )\|_{H^{s}} \lesssim
		\frac{1}{c^2} + \abs{\omega_{\infty} - \omega_{c}} +
		\|\ue -\uei \|_{H^s}^{2},
    \end{align*}
    which yields
	\begin{equation*}
		\|\ue - \uei \|_{H^s} \lesssim \|\ue  -
		\uei \|_{H^{1}} + \abs{\omega_{\infty} - \omega_{c}}
		+\frac{1}{c^2}.
    \end{equation*}
	Since $\ue  \to \uei $ in $H^{1}$ the result follows.
\end{proof}

To understand the asymptotic behavior for the energy ground state, we
study, as in section \ref{sec:asymptotic_action}, the behavior of
\begin{equation}
	\label{new_q10}
	\fe_{c,1}= c^2(\ue  - \uei ),
\end{equation} 
see \eqref{eq:deffc1} and Lemma \ref{Lem:3.4}.

Multiplying both sides of equation \eqref{eq:5.1} by $c^2$, we get the
following equation for $\fe_{c,1}$:
\begin{equation}\label{4.1}
    \begin{split}
		\Le \fe_{c,1} & = c^2(P_\infty(D)-P_c(D))\ue +
		c^2(\omega_\infty- \omega_c)\ue  \\
		& \quad+2(|x|^{-1}*(\fe_{c,1}\uei ))(\ue -\uei )+
		\left(|x|^{-1}*(\fe_{c,1}(\ue -\uei ))\right)\ue .
    \end{split}
\end{equation}

To analyze such an equation and prove the analogue of Lemma
\ref{Lem:3.4} we need to study the asymptotic behavior of the Lagrange
multiplier $\omega_c$. The next few lemmas are devoted to this task.

\begin{lemma}\label{Lem:4.2}
    $\lim\limits_{c\to\infty}c^2(e_\infty- e_c)= \frac{1}{8m^3}\|\Delta \uei \|_{L^2}^2$.
\end{lemma}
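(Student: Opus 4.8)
The plan is to sandwich $c^{2}(e_{\infty}-e_{c})$ between two quantities that both converge to $\frac{1}{8m^{3}}\norm{\Delta\uei}_{L^2}^2$, exploiting the sharp two-sided control on the symbol difference $P_\infty(\xi)-P_c(\xi)$ supplied by Lemma \ref{Lem:4.1}. The starting observation is that, for every admissible $u$,
\[
	\mathcal{E}_\infty(u)-\mathcal{E}_c(u)=\scalar{(P_\infty(D)-P_c(D))u}{u},
\]
since the Hartree terms cancel. Because $\uei\in\mathcal{S}$ and $\ue\in\mathcal{S}'$ (both inclusions follow from the regularity $\uei,\ue\in\bigcap\limits_{s>0}H^{s}$ in Theorem \ref{thm:energyGS}), I would test $\mathcal{E}_c$ against $\uei$ and $\mathcal{E}_\infty$ against $\ue$. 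Using $\mathcal{E}_c(\uei)\geq e_c$ and $\mathcal{E}_\infty(\ue)\geq e_\infty$, and recalling $P_c(\xi)\leq P_\infty(\xi)$ (the case $n=1$ of Lemma \ref{Lem:4.1}), this yields the squeeze
\[
	\scalar{(P_\infty(D)-P_c(D))\uei}{\uei}\ \leq\ e_\infty-e_c\ \leq\ \scalar{(P_\infty(D)-P_c(D))\ue}{\ue}.
\]
The whole statement then reduces to computing the $c\to\infty$ limits of the two outer terms after multiplying by $c^{2}$.

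For the symbol arithmetic I would record, straight from the $n=2$ case of Lemma \ref{Lem:4.1}, the two-sided estimate
\[
	\frac{|\xi|^4}{8m^3c^2}-\frac{C_2|\xi|^6}{m^5c^4}\ \leq\ P_\infty(\xi)-P_c(\xi)\ \leq\ \frac{|\xi|^4}{8m^3c^2},
\]
obtained by writing $P_\infty(\xi)-P_c(\xi)=\frac{|\xi|^4}{8m^3c^2}-P_{c,2}(\xi)$ and invoking $0\leq P_{c,2}(\xi)\leq C_2|\xi|^6/(m^5c^4)$. Passing to the Fourier side via Plancherel, the lower competitor gives
\[
	c^2\scalar{(P_\infty(D)-P_c(D))\uei}{\uei}=\int_{\mathbb{R}^3}c^2\bigl(P_\infty(\xi)-P_c(\xi)\bigr)\abs{\mathcal{F}[\uei](\xi)}^2\,d\xi\ \longrightarrow\ \frac{1}{8m^3}\norm{\Delta\uei}_{L^2}^2,
\]
the integrand converging pointwise to $\frac{|\xi|^4}{8m^3}\abs{\mathcal{F}[\uei]}^2$ and being dominated by the same integrable function (note $\uei\in H^{2}$, so $\int|\xi|^4\abs{\mathcal{F}[\uei]}^2=\norm{\Delta\uei}_{L^2}^2<\infty$); the explicit two-sided bound gives the same limit with an $O(c^{-2})$ error controlled by $\norm{(-\Delta)^{3/2}\uei}_{L^2}$, so dominated convergence can be avoided if preferred.

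For the upper competitor, the upper symbol bound directly gives
\[
	c^2\scalar{(P_\infty(D)-P_c(D))\ue}{\ue}\ \leq\ \frac{1}{8m^3}\int_{\mathbb{R}^3}|\xi|^4\abs{\mathcal{F}[\ue]}^2\,d\xi=\frac{1}{8m^3}\norm{\Delta\ue}_{L^2}^2.
\]
The decisive input here is Lemma \ref{conv_energy}: since $\ue\to\uei$ in $H^{s}$ for every $s$, in particular in $H^{2}$, we have $\norm{\Delta\ue}_{L^2}\to\norm{\Delta\uei}_{L^2}$, so the right-hand side tends to $\frac{1}{8m^3}\norm{\Delta\uei}_{L^2}^2$. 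Combining the $\liminf$ bound from the lower competitor with this $\limsup$ bound pins $c^{2}(e_\infty-e_c)$ to the claimed value. The only genuinely nontrivial ingredient is the strong $H^{2}$-convergence $\ue\to\uei$ used to pass the limit through $\norm{\Delta\ue}_{L^2}^2$; everything else is the elementary symbol estimate of Lemma \ref{Lem:4.1} together with Plancherel.
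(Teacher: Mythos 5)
Your proof is correct and is essentially the paper's own argument: both cross-test the minimizers ($\uei$ in $\mathcal{E}_c$, $\ue$ in $\mathcal{E}_\infty$) to squeeze $e_\infty - e_c$ between $\scalar{(P_\infty(D)-P_c(D))\uei}{\uei}$ and $\scalar{(P_\infty(D)-P_c(D))\ue}{\ue}$, then apply the two-sided symbol bounds from Lemma \ref{Lem:4.1} and the $H^2$-convergence $\ue \to \uei$ to identify the common limit $\frac{1}{8m^3}\norm{\Delta\uei}_{L^2}^2$. The only cosmetic difference is that you isolate the quadratic forms of the symbol difference before estimating, while the paper inserts the operator inequalities directly into the chains \eqref{4.5}--\eqref{4.6}.
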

\begin{proof}
    By Lemma \ref{Lem:4.1}, there holds
	\begin{equation*}
		P_\infty(D)-\frac{1}{8m^3c^2}(-\Delta)^2\leq P_c(D)\leq
		P_\infty(D)-\frac{1}{8m^3c^2}(-\Delta)^2 +
		\frac{1}{16m^5c^4}(-\Delta)^3.
	\end{equation*}
    Then
    \begin{equation}\label{4.5}
        \begin{split}
			e_c & = \mathcal{E}_{c}(\ue ) = \scalar{P_c(D)\ue }{\ue } -
			\frac{1}{2} \mathcal{H}(\ue ) \\
			& \geq \scalar{P_\infty(D)\ue }{\ue } - \frac{1}{2}
			\mathcal{H}(\ue ) -
			\frac{1}{8m^3c^2}\scalar{(-\Delta)^2\ue }{\ue } \\
			& \geq e_\infty - \frac{1}{8m^3c^2} \|\Delta
			\ue \|_{L^2}^2.
        \end{split}
    \end{equation}
    On the other hand,
    \begin{equation}\label{4.6}
        \begin{split}
			e_\infty & = \mathcal{E}_{\infty}(\uei ) =
			\scalar{P_\infty(D)\uei }{\uei } - \frac{1}{2}
			\mathcal{H}(\uei ) \\
			& \geq \scalar{P_c(D)\uei }{\uei } -
			\frac{1}{2} \mathcal{H}(\uei ) \\&
			\quad+\frac{1}{8m^3c^2}\|\Delta
			\uei \|_{L^2}^2-\frac{1}{16m^5c^4}\|(-\Delta)^\frac{3}{2}
			\uei \|_{L^2}^2 \\& \geq e_c +
			\frac{1}{8m^3c^2}\|\Delta
			\uei \|_{L^2}^2-\frac{1}{16m^5c^4}\|(-\Delta)^\frac{3}{2}
			\uei \|_{L^2}^2.
        \end{split}
    \end{equation}
    By combining \eqref{4.5} and \eqref{4.6}, we get
    \begin{equation*}
	\lim_{c\to\infty}c^2(e_\infty- e_c)= \frac{1}{8m^3}\|\Delta
	\uei \|_{L^2}^2.
    \end{equation*}
\end{proof}

The following results were proved in \cite[Lemma 3.2]{MR4078531}.
\begin{lemma}\label{Lem:4.3}
    $\ue $ and $\uei $ satisfy the  Pohozaev identity
    \begin{equation}\label{4.7}
		\scalar{T_c(D)\ue }{\ue } + e_c = 0,
    \end{equation}
    and
    \begin{equation}\label{4.8}
		\scalar{P_\infty(D)\uei }{\uei } + e_\infty = 0,
    \end{equation}
    where
	\begin{equation*}
		T_c(D)=\frac{P_c(D)}{\sqrt{-\Delta/(m^2c^2)+1}}.
	\end{equation*}
\end{lemma}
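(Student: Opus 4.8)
The plan is to derive both identities from the dilation invariance of the $L^2$ constraint, exploiting that $\ue$ and $\uei$ minimize $\mathcal{E}_c$ and $\mathcal{E}_\infty$ on the spheres $\mathcal{S}$ and $\mathcal{S}'$. For a function $u \in \mathcal{S}$ I would introduce the $L^2$-preserving scaling $u_\tau(x) = \tau^{3/2} u(\tau x)$, which satisfies $\norm{u_\tau}_{L^2} = \norm{u}_{L^2}$ for every $\tau > 0$, so that $u_\tau \in \mathcal{S}$. Since $\ue$ is a constrained minimizer and $\tau \mapsto (\ue)_\tau$ is an admissible curve through $\ue$ at $\tau = 1$, the scalar function $\Phi(\tau) := \mathcal{E}_c((\ue)_\tau)$ attains its minimum at $\tau = 1$, whence $\Phi'(1) = 0$. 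This is the only point at which the ground state property is used.

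Next I would compute the two pieces of $\Phi$ on the Fourier side. Writing $E(\xi) = \sqrt{c^2\abs{\xi}^2 + m^2c^4}$ and using $\mathcal{F}[u_\tau](\xi) = \tau^{-3/2}\mathcal{F}[u](\xi/\tau)$, the change of variables $\eta = \xi/\tau$ gives
\[
\scalar{P_c(D)(\ue)_\tau}{(\ue)_\tau} = \int_{\mathbb{R}^3}\left(\sqrt{c^2\tau^2\abs{\eta}^2 + m^2c^4} - mc^2\right)\abs{\mathcal{F}[\ue](\eta)}^2 \, d\eta,
\]
while the homogeneity of the Coulomb kernel yields $\mathcal{H}((\ue)_\tau) = \tau\,\mathcal{H}(\ue)$. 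Differentiating under the integral sign at $\tau = 1$ then produces the virial relation
\[
\int_{\mathbb{R}^3}\frac{c^2\abs{\xi}^2}{E(\xi)}\,\abs{\mathcal{F}[\ue](\xi)}^2 \, d\xi = \frac{1}{2}\mathcal{H}(\ue).
\]

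Finally I would match this relation to the operator $T_c(D)$. Since $\sqrt{-\Delta/(m^2c^2)+1}$ has symbol $E(\xi)/(mc^2)$, a direct simplification gives $T_c(\xi) = mc^2 - m^2c^4/E(\xi)$, and hence the pointwise algebraic identity
\[
T_c(\xi) + P_c(\xi) = E(\xi) - \frac{m^2c^4}{E(\xi)} = \frac{c^2\abs{\xi}^2}{E(\xi)}.
\]
Combining this with the virial relation and the definition $e_c = \scalar{P_c(D)\ue}{\ue} - \tfrac12\mathcal{H}(\ue)$ yields $\scalar{T_c(D)\ue}{\ue} + e_c = \tfrac12\mathcal{H}(\ue) - \tfrac12\mathcal{H}(\ue) = 0$, which is \eqref{4.7}. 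For \eqref{4.8} the same scaling argument applies with $P_\infty(\xi) = \abs{\xi}^2/(2m)$: here $\scalar{P_\infty(D)(\uei)_\tau}{(\uei)_\tau} = \tau^2\scalar{P_\infty(D)\uei}{\uei}$, so $\Phi_\infty'(1) = 0$ forces $\mathcal{H}(\uei) = 4\scalar{P_\infty(D)\uei}{\uei}$, and substituting into $e_\infty = \scalar{P_\infty(D)\uei}{\uei} - \tfrac12\mathcal{H}(\uei)$ gives $\scalar{P_\infty(D)\uei}{\uei} + e_\infty = 0$.

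The main obstacle I anticipate is analytic rather than algebraic: justifying the differentiation of $\Phi$ under the integral sign and the differentiability of the nonlocal term along the scaling curve. Both are controlled by the regularity statement $\ue \in \bigcap_{s>0} H^s$ from Theorem \ref{thm:energyGS}, which makes $\abs{\xi}^2\abs{\mathcal{F}[\ue]}^2$ integrable and provides a uniform dominating function for the difference quotients when $\tau$ ranges near $1$, together with Lemma \ref{hls} for the Coulomb energy; one also checks that $\tau \mapsto (\ue)_\tau$ is $H^{1/2}$-differentiable so that the constrained critical point computation is rigorous.
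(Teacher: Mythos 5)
Your proof is correct, but it takes a genuinely different route from the paper. The paper does not prove \eqref{4.7} at all: it cites it from Guo--Zeng \cite[Lemma 3.2]{MR4078531}, and then obtains \eqref{4.8} by passing to the limit $c\to\infty$ in \eqref{4.7}, using $e_c\to e_\infty$ together with the $H^s$-convergence $\ue\to\uei$ to show $\|T_c(D)\ue - P_\infty(D)\uei\|_{L^2}=o_c(1)$. You instead prove both identities from scratch by the standard dilation argument: the $L^2$-preserving scaling $u_\tau(x)=\tau^{3/2}u(\tau x)$ stays on the constraint sphere, so $\tau\mapsto\mathcal{E}_c((\ue)_\tau)$ has a critical point at $\tau=1$; your Fourier-side virial relation and the pointwise identity $T_c(\xi)+P_c(\xi)=c^2|\xi|^2/E(\xi)$ are both correct, as is the elementary non-relativistic case. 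What your approach buys: it is self-contained (no external citation), it proves \eqref{4.8} without invoking any non-relativistic limit machinery (the paper's proof of \eqref{4.8} depends on Theorem \ref{thm:energyGS} and Lemma \ref{conv_energy}, whereas \eqref{4.8} is purely a statement about the limit problem), and the algebraic identity $T_c(\xi)+P_c(\xi)=c^2|\xi|^2/E(\xi)$ explains where the otherwise opaque operator $T_c(D)$ comes from. What the paper's route buys: brevity, given that \eqref{4.7} is already in the literature and the convergence results were already established earlier in the paper. One small remark on your anticipated ``main obstacle'': it is lighter than you suggest, since the Hartree term needs no domination argument at all ($\mathcal{H}((\ue)_\tau)=\tau\,\mathcal{H}(\ue)$ exactly), and for the kinetic term the $\tau$-derivative of the symbol is bounded by $\tau|\eta|^2/m$, so $\ue\in H^1$ --- available from Theorem \ref{thm:energyGS} --- already suffices to differentiate under the integral sign; there is also no need to check $H^{1/2}$-differentiability of the curve, since the constrained minimality is used only through the scalar inequality $\Phi(\tau)\geq\Phi(1)$.
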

\begin{proof}
    We only prove \eqref{4.8} here.
    A direct computation shows,
    \begin{multline*}
        T_c(D)- P_\infty(D) \\
		= \frac{(P_c(D)- P_\infty(D))}{\sqrt{-\Delta/(m^2c^2)+1}} -
		\frac{(-\Delta)^2}{2m^3c^2\sqrt{-\Delta/(m^2c^2) +
		1}(\sqrt{-\Delta/(m^2c^2)+1} + 1)},
    \end{multline*}
	then by the fact that $\ue \to \uei $ in $H^s$ for each $s \geq
	0$, we have
    \begin{equation}
        \begin{split}
			\| \left(T_c(D)-P_\infty(D)\right)\ue \|_{L^2} & \lesssim
			\|(P_c(D)- P_\infty(D))\ue \|_{L^2}+
			\frac{1}{c^2}\|\Delta^2\ue \|_{L^2} \\& = o_{c}(1),
			\quad \text{as}\quad c\to \infty.
        \end{split}
    \end{equation}
    and
    \begin{equation}\label{4.10}
        \begin{split}
			\qquad\,\,\,\| T_c(D)\ue -P_\infty(D)\uei \|_{L^2} & \leq
			\| \left(T_c(D)-P_\infty(D)\right)\ue \|_{L^2}+ \|
			P_\infty(D)(\ue -\uei )\|_{L^2} \\& = o_{c}(1), \quad
			\text{as}\quad c\to \infty.
        \end{split}
    \end{equation}
	Taking \eqref{4.10} into \eqref{4.7}, and considering the limit as
	$e_c \to e_\infty$, we arrive that
	\begin{equation*}
		0= \lim_{c\to \infty}\scalar{T_c(D)\ue }{\ue } + e_c =
		\scalar{P_\infty(D)\uei }{\uei } + e_\infty.
	\end{equation*}
\end{proof}
\begin{lemma}\label{Lem:4.4}
	$ \lim\limits_{c\to \infty}c^2(\omega_\infty - \omega_c) =
	-\frac{5}{8m^3}\|\Delta \uei \|_{L^2}^2.$
\end{lemma}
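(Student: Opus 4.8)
The plan is to derive a closed algebraic expression for $\omega_c$ in terms of the energy $e_c$ and a purely relativistic ``kinetic defect'' that is manifestly of order $c^{-2}$, thereby reducing the whole computation to Lemma \ref{Lem:4.2}. Writing $K_c = \scalar{P_c(D)\ue}{\ue}$ and $\tilde{K}_c = \scalar{T_c(D)\ue}{\ue}$, I would first test \eqref{1.10} against $\ue$ (using $\|\ue\|_{L^2}=1$) to get $K_c + \omega_c = \mathcal{H}(\ue)$, and combine this with the energy identity $e_c = K_c - \frac{1}{2}\mathcal{H}(\ue)$ to eliminate $\mathcal{H}(\ue)$; this yields $\omega_c = K_c - 2e_c$. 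The decisive step is then to rewrite $K_c$ using the Pohozaev identity \eqref{4.7} of Lemma \ref{Lem:4.3}, namely $\tilde{K}_c = -e_c$, in the form
\begin{equation*}
	\omega_c = (K_c - \tilde{K}_c) - 3e_c.
\end{equation*}
The same elimination in the non-relativistic case gives $\omega_\infty = K_\infty - 2e_\infty$ with $K_\infty = \scalar{P_\infty(D)\uei}{\uei}$, and the identity \eqref{4.8} ($K_\infty = -e_\infty$) then yields $\omega_\infty = -3e_\infty$.

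Subtracting the two expressions, I obtain
\begin{equation*}
	c^2(\omega_\infty - \omega_c) = -c^2(K_c - \tilde{K}_c) - 3c^2(e_\infty - e_c),
\end{equation*}
and Lemma \ref{Lem:4.2} already supplies $c^2(e_\infty - e_c) \to \frac{1}{8m^3}\|\Delta \uei\|_{L^2}^2$. It remains to compute the limit of $c^2(K_c - \tilde{K}_c) = c^2\scalar{(P_c(D) - T_c(D))\ue}{\ue}$. Here I would expand the symbol: since $T_c(\xi) = P_c(\xi)\,(1 + |\xi|^2/(m^2c^2))^{-1/2}$, a direct Taylor expansion of the two factors gives
\begin{equation*}
	c^2\bigl(P_c(\xi) - T_c(\xi)\bigr) = \frac{|\xi|^4}{4m^3} + O\!\left(\frac{|\xi|^6}{c^2}\right).
\end{equation*}
Using the uniform bounds $\sup_{c>1}\|\ue\|_{H^s} < \infty$ (Lemma \ref{Lem:2.6}, which holds for energy ground states) to control the $O(|\xi|^6/c^2)$ remainder and the convergence $\ue \to \uei$ in every $H^s$ (Lemma \ref{conv_energy}) to pass to the limit in $\scalar{(-\Delta)^2\ue}{\ue} = \|\Delta \ue\|_{L^2}^2$, I get $c^2(K_c - \tilde{K}_c) \to \frac{1}{4m^3}\|\Delta \uei\|_{L^2}^2$. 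Combining the two limits gives $-\frac{1}{4m^3}\|\Delta\uei\|_{L^2}^2 - \frac{3}{8m^3}\|\Delta\uei\|_{L^2}^2 = -\frac{5}{8m^3}\|\Delta \uei\|_{L^2}^2$, as claimed.

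The main obstacle — and the reason the Pohozaev identity is indispensable — is that the naive route through $\omega_c = K_c - 2e_c$ forces one to evaluate $c^2(K_\infty - K_c)$, whose troublesome piece $c^2\scalar{P_\infty(D)(\uei - \ue)}{\uei + \ue}$ is only of size $c^2\|\ue - \uei\|_{H^1}$ and hence requires the precise first-order profile of $\ue - \uei$; since that profile is itself determined by $b_1$ (the very quantity under computation in Theorem \ref{them:1.5}), this would be circular. Replacing $K_c$ by $\tilde{K}_c = -e_c$ converts the $\uei$-dependent kinetic difference into the scalar $e_c$, which is handled by Lemma \ref{Lem:4.2}, and leaves only the defect $K_c - \tilde{K}_c$, which depends on $\ue$ alone and is automatically $O(c^{-2})$. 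The one remaining delicate point is the honest control of the symbol remainder, but this is routine given the uniform higher-Sobolev bounds.
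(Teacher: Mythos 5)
Your proposal is correct and is essentially the paper's own proof in a slightly tidier packaging: both arguments rest on testing the equation to get $\omega_c=\scalar{P_c(D)\ue}{\ue}-2e_c$, on the Pohozaev identities of Lemma \ref{Lem:4.3}, on the energy asymptotics of Lemma \ref{Lem:4.2}, and on a Taylor expansion of the symbols; indeed your final identity $c^2(\omega_\infty-\omega_c)=-c^2\scalar{(P_c(D)-T_c(D))\ue}{\ue}-3c^2(e_\infty-e_c)$ coincides algebraically with the paper's \eqref{4.14}, since there $\scalar{(P_\infty(D)-P_c(D))\ue}{\ue}+\scalar{(T_c(D)-P_\infty(D))\ue}{\ue}=-\scalar{(P_c(D)-T_c(D))\ue}{\ue}$. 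The only differences are bookkeeping: you reach that identity directly, avoiding the paper's intermediate step \eqref{4.11} and the order-one term $\scalar{P_\infty(D)\fe_{c,1}}{\ue+\uei}$ that the paper introduces and then cancels, and you compute the single symbol limit $c^2(P_c(D)-T_c(D))\to\frac{1}{4m^3}(-\Delta)^2$ instead of the paper's two separate limits for $c^2(P_\infty(D)-P_c(D))$ and $c^2(T_c(D)-P_\infty(D))$, whose sum yields the same contribution $-\frac{1}{4m^3}\|\Delta\uei\|_{L^2}^2$.
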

\begin{proof}
	Lemma \ref{Lem:4.1} implies that 
    \begin{equation*}
			c^2(P_c(D)- P_\infty(D)) \to -
			\frac{1}{8m^{2}}(-\Delta)^{2}.
    \end{equation*}

    It follows from Lemma \ref{Lem:4.3} that
    \begin{equation}\label{4.11}
        \begin{split}
			c^2(e_\infty - e_c) & = c^2 \left(\scalar{T_c(D)\ue }{\ue } -
			\scalar{P_\infty(D)\uei }{\uei } \right)\\
			&= \scalar{c^2(T_c(D)- P_\infty(D))\ue }{ \ue } +
			\scalar{P_\infty(D)\fe_{c,1}}{\ue  + \uei }.
        \end{split}
    \end{equation}
	and
    \begin{multline}\label{4.12}
            c^2(T_c(D)- P_\infty(D)) 
            \\
			=\frac{c^2(P_c(D)- 
			P_\infty(D))}{\sqrt{-\Delta/(m^2c^2)+1}} 
			-\frac{(-\Delta)^2}{2m^3\sqrt{-\Delta/(m^2c^2)+1}(\sqrt{-\Delta/(m^2c^2)+1}+1)}
			\\
			\to -\frac{3\Delta^2}{8m^3}, \quad \text{as}\quad c\to \infty,
    \end{multline}
    then it follows from $\|\ue -\uei \|_{H^s}\to 0$ that
    \begin{equation}\label{4.13}
		\lim_{c\to\infty} \scalar{c^2(T_c(D)-
		P_\infty(D))\ue }{\ue } = -\frac{3}{8m^3}\|\Delta
		\uei \|_{L^2}^2.
    \end{equation}
    Since
	\begin{equation*}
		-\omega_c = 2e_c - \scalar{P_c(D)\ue }{\ue },
	\end{equation*}
	and
	\begin{equation*}
		-\omega_\infty = 2e_\infty - \scalar{P_\infty(D)\uei }{\uei },
	\end{equation*}
    then by \eqref{4.11}, there holds
    \begin{multline}\label{4.14}
		c^{2}(\omega_\infty - \omega_c) \\
		= 2c^{2} (e_c - e_\infty) + c^{2} \scalar{(P_\infty(D) -
		P_c(D))\ue }{\ue } - \scalar{(P_\infty(D)\fe_{c,1}}{\ue + \uei
		} \\
		= 3c^2 (e_c - e_\infty) + c^2 \scalar{(P_\infty(D) -
		P_c(D))\ue }{\ue } + c^{2}\scalar{(T_c(D)-
		P_\infty(D))\ue }{\ue }.
    \end{multline}
    By combining Lemma \ref{Lem:4.2} with \eqref{4.13}, we obtain
	\begin{equation*}
		\lim_{c \to \infty} c^2(\omega_\infty-\omega_c) =
		-\frac{5}{8m^3}\|\Delta \uei \|_{L^2}^2.
	\end{equation*}
\end{proof}
The non-degeneracy of the linearized operator $\Le $ yields
the following linearized equation has only the unique radial solution
$\fe_1\in H^1_r$

\begin{equation}\label{4.15}
	\Le \fe_1= -P_{\infty,1}(D)\uei -b_1\uei ,
\end{equation}
where $b_1= \frac{5}{8m^3} \|\Delta \uei \|_{L^2}^2$.

\begin{lemma}
	\label{Lem:4.5}
	For each $s>0$, there holds $\|\fe_{c,1}-\fe_{1}\|_{H^s}\to 0$, as
	$c\to \infty.$
\end{lemma}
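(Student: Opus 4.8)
The plan is to follow the scheme of Lemma~\ref{Lem:3.4}, rewriting the claim as a single equation $\Le(\fe_{c,1}-\fe_1)=h_c$ with
\begin{equation*}
	\|h_c\|_{H^s}=o_c(1)+o_c(1)\|\fe_{c,1}-\fe_1\|_{H^s},
\end{equation*}
and then invoking Lemma~\ref{Lem:3.3}, which remains valid for $\Le$ thanks to the non-degeneracy of $\uei$. The one genuinely new ingredient, absent in the action case, is the second-order expansion of the Lagrange multiplier furnished by Lemma~\ref{Lem:4.4}; indeed the constant $b_1=\frac{5}{8m^3}\|\Delta\uei\|_{L^2}^2$ appearing in \eqref{4.15} is chosen precisely so that $c^2(\omega_\infty-\omega_c)+b_1\to 0$.

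First I would subtract the defining equation \eqref{4.15} for $\fe_1$ from equation \eqref{4.1} for $\fe_{c,1}$, collecting the right-hand side into three pieces: a linear-operator piece $c^2(P_\infty(D)-P_c(D))\ue+P_{\infty,1}(D)\uei$, a Lagrange-multiplier piece $c^2(\omega_\infty-\omega_c)\ue+b_1\uei$, and a Hartree piece $2(|x|^{-1}*(\fe_{c,1}\uei))(\ue-\uei)+(|x|^{-1}*(\fe_{c,1}(\ue-\uei)))\ue$. For the first piece, using $P_\infty(D)-P_c(D)=-P_{c,1}(D)$ together with $P_{c,1}(D)=P_{c,2}(D)+c^{-2}P_{\infty,1}(D)$, I would rewrite it as $-c^2P_{c,2}(D)\ue-P_{\infty,1}(D)(\ue-\uei)$. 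Lemma~\ref{Lem:2.1} and the uniform bound $\sup_{c>1}\|\ue\|_{H^{s+6}}<\infty$ (Lemma~\ref{Lem:2.6}, which holds for energy ground states) then give $\|c^2P_{c,2}(D)\ue\|_{H^s}\lesssim c^{-2}\|\ue\|_{H^{s+6}}=o_c(1)$, while Lemma~\ref{conv_energy} gives $\|P_{\infty,1}(D)(\ue-\uei)\|_{H^s}\lesssim\|\ue-\uei\|_{H^{s+4}}=o_c(1)$.

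Next I would treat the Lagrange-multiplier piece by writing
\begin{equation*}
	c^2(\omega_\infty-\omega_c)\ue+b_1\uei=\bigl(c^2(\omega_\infty-\omega_c)+b_1\bigr)\ue-b_1(\ue-\uei).
\end{equation*}
By Lemma~\ref{Lem:4.4} the scalar factor $c^2(\omega_\infty-\omega_c)+b_1$ tends to $0$, and $\ue$ is bounded in $H^s$, so the first term is $o_c(1)$; the second term is $o_c(1)$ by Lemma~\ref{conv_energy}. For the Hartree piece the key observation is $\ue-\uei=c^{-2}\fe_{c,1}$, which recasts it as
\begin{equation*}
	c^{-2}\Bigl(2(|x|^{-1}*(\fe_{c,1}\uei))\fe_{c,1}+(|x|^{-1}*\fe_{c,1}^2)\ue\Bigr).
\end{equation*}
The trilinear estimate of Lemma~\ref{Lem:2.2}, together with the uniform bounds on $\ue$ and $\uei$, bounds its $H^s$ norm by $c^{-2}\|\fe_{c,1}\|_{H^s}^2=(c^{-2}\|\fe_{c,1}\|_{H^s})\,\|\fe_{c,1}\|_{H^s}$; since $c^{-2}\|\fe_{c,1}\|_{H^s}=\|\ue-\uei\|_{H^s}=o_c(1)$ by Lemma~\ref{conv_energy}, this equals $o_c(1)\|\fe_{c,1}\|_{H^s}\le o_c(1)+o_c(1)\|\fe_{c,1}-\fe_1\|_{H^s}$.

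Combining the three estimates yields $\|\Le(\fe_{c,1}-\fe_1)\|_{H^s}=o_c(1)+o_c(1)\|\fe_{c,1}-\fe_1\|_{H^s}$, and Lemma~\ref{Lem:3.3} then forces $\|\fe_{c,1}-\fe_1\|_{H^s}=o_c(1)$, as claimed. The main obstacle, and the only real departure from the action-case argument of Lemma~\ref{Lem:3.4}, is isolating the exact cancellation in the Lagrange-multiplier piece: it is precisely the asymptotics of $\omega_c$ in Lemma~\ref{Lem:4.4}, matched by the definition of $b_1$, that prevents $c^2(\omega_\infty-\omega_c)\ue$ from blowing up, so the whole estimate rests on that input. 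The remaining pieces are routine once the uniform higher-Sobolev bounds and the trilinear estimate are in hand.
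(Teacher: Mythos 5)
Your proposal is correct and follows essentially the same route as the paper: subtract the defining equation \eqref{4.15} for $\fe_1$ from \eqref{4.1}, estimate the operator piece via $P_{c,1}(D)=P_{c,2}(D)+c^{-2}P_{\infty,1}(D)$ together with Lemmas \ref{Lem:2.1}, \ref{Lem:2.6} and \ref{conv_energy}, handle the multiplier piece through the cancellation $c^2(\omega_\infty-\omega_c)+b_1\to 0$ from Lemma \ref{Lem:4.4}, bound the Hartree remainder by $o_c(1)+o_c(1)\|\fe_{c,1}-\fe_1\|_{H^s}$, and conclude with Lemma \ref{Lem:3.3} (valid for $\Le$ by non-degeneracy). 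The only difference is presentational: the paper packages the multiplier term as $b_{c,1}=c^2(\omega_c-\omega_\infty)$ and the nonlinearity as $\Ge_{c,1}$ and refers back to the action-case estimates, whereas you write the same cancellations out explicitly.
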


\begin{proof}
	The proof follows the lines of the proof of Lemma \ref{Lem:3.4}, 
	but we now have an additional term related to $\omega_{c}$.
	
	From \eqref{4.1} we obtain
	\begin{equation}
		\label{new_q11}
		\Le \fe_{c,1} = -c^2 P_{c,1}(D) \ue  - b_{c,1} \ue  +
		\Ge_{c,1},
	\end{equation}
	where
	\begin{equation}
		\Ge_{c,1} = c^2 \left( \mathcal{N}\left(\uei  + \frac{1}{c^2}
		\fe_{c,1}\right) - \mathcal{N}(\uei ) -
		\mathcal{N}^{(1)}(\uei )\left[\frac{1}{c^2}\fe_{c,1}\right]
		\right),
	\end{equation}
	(see also \eqref{eq:defGc1}) and
	\begin{equation}
		b_{c,1} = c^2 (\omega_c - \omega_\infty).
	\end{equation}
	Subtracting \eqref{4.15} from \eqref{new_q11} yields
	\begin{equation}
		\label{4.16}
		\Le (\fe_{c,1} - \fe_1) = -c^2 P_{c,1}(D) \ue  +
		P_{\infty,1}(D) \uei  - b_{c,1} \ue  + b_1 \uei  +
		\Ge_{c,1}.
	\end{equation}
	Following similar estimates as in
	\eqref{eq:estimateGc1} and \eqref{eq:estimatenorm}, we have
	\begin{equation}
		\left\| -c^2 P_{c,1}(D) \ue  + P_{\infty,1}(D) \uei  +
		\Ge_{c,1} \right\|_{H^s} \lesssim o_c(1) + o_c(1) \| \fe_{c,1} -
		\fe_{1} \|_{H^s}.
	\end{equation}
	Then, by Lemma \ref{Lem:4.4} and the fact that
	\begin{equation*}
		\| \ue  - \uei  \|_{H^s} \to 0 \quad \text{as } c \to \infty,
	\end{equation*}
	we obtain
	\begin{multline*}
		\left\| -c^2 P_{c,1}(D) \ue  + P_{\infty,1}(D) \uei  - 
		b_{c,1} \ue  + b_1 \uei  + \Ge_{c,1} \right\|_{H^s} \\
		\lesssim o_c(1) + o_c(1) \| \fe_{c,1} - \fe_{1}\|_{H^s}.
	\end{multline*}
	Therefore, applying Lemma \ref{Lem:3.3} gives
	\begin{equation*}
		\| \fe_{c,1} - \fe_1 \|_{H^s} = o_c(1).
	\end{equation*}
\end{proof}

Now, we proceed to prove Theorem \ref{them:1.5} by induction. 
First, we need to prove the asymptotic properties of the ground state 
energy $e_c$ and the Lagrange multiplier $\omega_c$.  

\begin{lemma}\label{newlemma:1}
	Assume that for $n \in \mathbb{N}$ there exist radial functions
	$\fe_{1}, \ldots, \fe_n \in \bigcap_{s > 0} H^{s}(\mathbb{R}^3)$
	such that
	\begin{equation*}
		\left\| c^{2n}(\ue  - \uei ) - \sum_{j=1}^{n} c^{2(n-j)}
		\fe_j \right\|_{H^s} \to 0,\quad \text{as} \quad c \to \infty,
	\end{equation*}
	or equivalently,
	\begin{equation}\label{new_q1}
		\ue  = \uei  + \sum_{j=1}^{n} \frac{\fe_j}{c^{2j}} +
		\frac{o_c(1)}{c^{2n}},
	\end{equation}
	then there exist constants $a_j \in \mathbb{R}$, $j = 1, \dots,
	n+1$, such that
	\begin{equation}
		\label{eq:estimateec}
		e_c = e_\infty + \sum_{j=1}^{n+1} \frac{a_j}{c^{2j}} +
		\frac{o_c(1)}{c^{2n+2}}.
	\end{equation}
\end{lemma}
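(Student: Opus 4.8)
The plan is to substitute the hypothesized expansion \eqref{new_q1} of $\ue$ directly into $e_c = \mathcal{E}_c(\ue) = \scalar{P_c(D)\ue}{\ue} - \tfrac12\mathcal{H}(\ue)$ and expand in powers of $1/c^2$. First I would peel off the principal part of the operator: writing $P_c(D) = P_\infty(D) + \sum_{j=1}^{n+1} c^{-2j}P_{\infty,j}(D) + P_{c,n+2}(D)$ and noting that $\scalar{P_\infty(D)\ue}{\ue} - \tfrac12\mathcal{H}(\ue) = \mathcal{E}_\infty(\ue)$, I obtain the exact identity
\[
	e_c = \mathcal{E}_\infty(\ue) + \sum_{j=1}^{n+1}\frac{1}{c^{2j}}\scalar{P_{\infty,j}(D)\ue}{\ue} + \scalar{P_{c,n+2}(D)\ue}{\ue}.
\]
Lemma \ref{Lem:2.1}, together with the uniform bounds $\sup_{c>1}\|\ue\|_{H^s} < \infty$ (Lemma \ref{Lem:2.6}, which holds also for energy ground states), controls the last term by $\lesssim c^{-2n-4} = o_c(1)/c^{2n+2}$.

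For the finite sum each summand already carries an explicit factor $c^{-2j}$ with $j \geq 1$, so a \emph{naive} substitution suffices. Inserting $\ue = \sum_{i=0}^{n} c^{-2i}\fe_i + o_c(1)\,c^{-2n}$ (with $\fe_0 = \uei$) into the bilinear forms $\scalar{P_{\infty,j}(D)\ue}{\ue}$, the contribution of the $o_c(1)\,c^{-2n}$ tail is paired against factors of size at worst $1$, hence is $o_c(1)\,c^{-2n-2j} \leq o_c(1)\,c^{-2n-2}$, while the remaining products $c^{-2(j+i+i')}\scalar{P_{\infty,j}(D)\fe_i}{\fe_{i'}}$ are constant multiples of powers of $1/c^2$. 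As a consistency check, the coefficient of $c^{-2}$ here is $\scalar{P_{\infty,1}(D)\uei}{\uei} = -\tfrac{1}{8m^3}\|\Delta\uei\|_{L^2}^2$, which is exactly the value of $a_1$ announced in Theorem \ref{them:1.5}.

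The crux is the term $\mathcal{E}_\infty(\ue)$: here $\ue$ is known only to order $c^{-2n}$, whereas the target accuracy is $c^{-2n-2}$, so a direct substitution loses a factor of $c^2$ through the linear term. I would remove this loss by exploiting that $\uei$ is a critical point of $\mathcal{E}_\infty$ on the sphere. Since $\mathcal{E}_\infty$ is a quartic, Lemma \ref{Lem:2.4_new} gives the exact Taylor expansion about $\uei$ in the increment $h = \ue - \uei$, with constant term $\mathcal{E}_\infty(\uei)=e_\infty$. The dangerous first-order term is in fact quadratic in $h$: using equation \eqref{1.11} for $\uei$ one computes $d\mathcal{E}_\infty(\uei)[h] = 2\scalar{P_\infty(D)\uei - \mathcal{N}(\uei)}{h} = -2\omega_\infty\scalar{\uei}{h}$, and the constraints $\|\ue\|_{L^2} = \|\uei\|_{L^2} = 1$ force $\scalar{\uei}{h} = -\tfrac12\|h\|_{L^2}^2$, whence $d\mathcal{E}_\infty(\uei)[h] = \omega_\infty\|h\|_{L^2}^2$. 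Thus every term of $\mathcal{E}_\infty(\ue) - e_\infty$ is at least bilinear in $h = \sum_{j\geq1} c^{-2j}\fe_j + o_c(1)\,c^{-2n}$, so the tail always acquires at least one further factor of size $O(c^{-2})$, yielding an error $o_c(1)\,c^{-2n-2}$; the boundedness of the multilinear forms $d^k\mathcal{E}_\infty(\uei)$ used here follows from Lemma \ref{Lem:2.2}.

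Finally I would collect terms. Gathering all the constant-coefficient contributions of orders $c^{-2},\ldots,c^{-2(n+1)}$ produced above defines $a_1,\ldots,a_{n+1}$, while all powers $c^{-2j}$ with $j \geq n+2$ are $o_c(1)\,c^{-2n-2}$ and join the remainder. Tracking indices shows each $a_j$ is a finite combination of quantities $\scalar{P_{\infty,\ell}(D)\fe_i}{\fe_{i'}}$ (with $\ell\geq1$, $\ell+i+i'=j$) and $d^k\mathcal{E}_\infty(\uei)[\fe_{i_1},\ldots,\fe_{i_k}]$ (with $k\geq2$, $i_1+\cdots+i_k=j$, each $i_\bullet\geq1$); in every case the largest index is at most $j-1$, so $a_j$ depends only on $\uei=\fe_0, \fe_1,\ldots,\fe_{j-1}$, as claimed. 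The single genuine difficulty is the gain of one order in $\mathcal{E}_\infty(\ue)$; once the critical-point identity $d\mathcal{E}_\infty(\uei)[h]=\omega_\infty\|h\|_{L^2}^2$ is in hand, the rest is bookkeeping of powers of $1/c^2$.
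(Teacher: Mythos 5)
Your proposal is correct and follows essentially the same route as the paper: both expand $P_c(D)$ to order $c^{-2(n+1)}$ with an $\mathcal{O}(c^{-2n-4})$ remainder controlled by Lemma \ref{Lem:2.1} and the uniform $H^s$ bounds, and both handle the delicate term $\mathcal{E}_\infty(\ue)$ via the exact quartic Taylor expansion of Lemma \ref{Lem:2.4_new}, using the Euler--Lagrange equation \eqref{1.11} together with the constraint $\|\ue\|_{L^2}=\|\uei\|_{L^2}=1$ to convert the first-order term into $\omega_\infty\|\ue-\uei\|_{L^2}^2$, so that the $o_c(1)c^{-2n}$ tail always picks up an extra factor of order $c^{-2}$. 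Your bookkeeping of the coefficients (including the check $a_1=-\tfrac{1}{8m^3}\|\Delta\uei\|_{L^2}^2$ and the dependence of $a_j$ only on $\fe_0,\dots,\fe_{j-1}$) agrees with the paper's $a_j=a_{1,j}+a_{2,j}$.
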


\begin{proof}
	From assumption \eqref{new_q1} and the asymptotic expansion
	\begin{equation}\label{new_q5}
		\sqrt{-c^2\Delta + m^2 c^4} - m c^2 = \sum_{j=0}^{n+1}
		\frac{(-1)^j \alpha_{j+1}}{m^{2j+1} c^{2j}} (-\Delta)^{j+1} +
		\mathcal{O}\left(\frac{1}{c^{2n+4}}\right),
	\end{equation}
	we obtain
	\begin{equation}
		\label{new_q2}
		e_c = \mathcal{E}_c(\ue ) = \mathcal{E}_\infty(\ue ) +
		\sum_{j=1}^{n+1} \frac{(-1)^j \alpha_{j+1}}{m^{2j+1} c^{2j}}
		\left\| (-\Delta)^{\frac{j+1}{2}} \ue \right\|_{L^2}^2 +
		\mathcal{O}\left(\frac{1}{c^{2n+4}}\right)
	\end{equation}
	
	Since for all $j = 1, \ldots, n$ we have that
	\begin{equation*}
		\left\| (-\Delta)^{\frac{j+1}{2}} \left( \uei + \sum_{k=1}^{n}
		\frac{\fe_k}{c^{2k}} + \frac{o_c(1)}{c^{2n}}\right)
		\right\|_{L^2}^2 
		= \sum_{k = 0}^{n} \frac{\beta_{j,k}}{c^{2k}} +
		\frac{o_{c}(1)}{c^{2n+2}}
	\end{equation*}
	for some constant $\beta_{j,k}$ independent of $c$ we deduce that 
	\begin{equation*}
		e_c = \mathcal{E}_\infty(\ue ) + \sum_{j=1}^{n+1}
		\frac{a_{1,j}}{c^{2j}} + \frac{o_c(1)}{c^{2n+2}}
	\end{equation*}
	where, letting $\fe_{0} = \uei$,
	\begin{equation*}
		a_{1,j} = \sum_{\substack{t + s = j \\ t \geq 1}} \sum_{k + 
		\ell = s} \frac{(-1)^{t} \alpha_{t+1}}{m^{2t+1}} 
		\scalar{(-\Delta)^{\frac{t+1}{2}} 
		\fe_{k}}{(-\Delta)^{\frac{t+1}{2}} \fe_{\ell}} \in \mathbb{R}
	\end{equation*}
	are constants independent of $c$.
	By Lemma~\ref{Lem:2.4_new}, we expand
	\begin{equation}\label{new_q3}
		\begin{split}
			\mathcal{E}_\infty(\ue ) &= \sum_{k=0}^4 \frac{1}{k!} \,
			d^k \mathcal{E}_\infty(\uei )\big[ \underbrace{\ue  -
			\uei , \dots, \ue  - \uei }_{k \text{ times}} \big]
			\\
			&= e_\infty - 2\omega_\infty \scalar{\uei }{ \ue  -
			\uei } + \sum_{k=2}^4 \frac{1}{k!} \, d^k
			\mathcal{E}_\infty(\uei )\big[ \underbrace{\ue  -
			\uei , \dots, \ue  - \uei }_{k \text{ times}} \big]
			\\
			&= e_\infty + \omega_\infty \| \ue  - \uei  \|_{L^2}^2 +
			\sum_{k=2}^4 \frac{1}{k!} \, d^k
			\mathcal{E}_\infty(\uei )\big[ \underbrace{\ue  -
			\uei , \dots, \ue  - \uei }_{k \text{ times}} \big].
		\end{split}
	\end{equation}

	From our assumption \eqref{new_q1}, it follows that
	\begin{equation}
		\label{new_q4}
		\omega_\infty \| \ue  - \uei  \|_{L^2}^2 + \sum_{k=2}^4
		\frac{1}{k!} \, d^k \mathcal{E}_\infty(\uei )\big[
		\underbrace{\ue  - \uei , \dots, \ue  - \uei }_{k \text{
		times}} \big] = \sum_{j=1}^{n+1} \frac{a_{2,j}}{c^{2j}} +
		\frac{o_c(1)}{c^{2n+2}},
	\end{equation}
	with
	\begin{multline*}
		a_{2,j} = \sum_{\substack{i + k = j \\ i, j \geq 1}}
		\left(\omega_\infty\scalar{\fe_{i}}{\fe_{k}} + \frac{1}{2}d^2 \mathcal{E}_\infty(\uei
		) [\fe_{i}, \fe_{k}]\right) \\
		+ \frac{1}{6}\sum_{\substack{i + k + \ell = j \\ i, j, \ell \geq 1}} d^3
		\mathcal{E}_\infty(\uei) [\fe_{i}, \fe_{k}, \fe_{\ell}] +\frac{1}{24}
		\sum_{\substack{i + k + \ell + t = j \\ i, j, \ell, t \geq 1}}
		d^4 \mathcal{E}_\infty(\uei) [\fe_{i}, \fe_{k}, \fe_{\ell},
		\fe_{t}].
	\end{multline*}
	Combining \eqref{new_q2}, \eqref{new_q3}, and \eqref{new_q4}, we conclude
	\begin{equation*}
		e_c = e_\infty + \sum_{j=1}^{n+1} \frac{a_j}{c^{2j}} +
		\frac{o_c(1)}{c^{2n+2}},
	\end{equation*}
	where $a_j = a_{1,j} + a_{2,j}$ are constants idependent of $c$.
\end{proof}

\begin{lemma}\label{newlemma:2}
	Assume that for $n \in \mathbb{N}$ there exist radial functions
	$\fe_{1}, \ldots, \fe_n \in \bigcap_{s > 0} H^{s}(\mathbb{R}^3)$
	such that
	\begin{equation*}
		\left\| c^{2n}(\ue - \uei ) - \sum_{j=1}^{n} c^{2(n-j)} \fe_j
		\right\|_{H^s} \to 0, \quad \text{as} \quad c \to \infty,
	\end{equation*}
	or equivalently,
	\begin{equation}\label{new_q6}
		\ue = \uei + \sum_{j=1}^{n} \frac{\fe_j}{c^{2j}} +
		\frac{o_c(1)}{c^{2n}},
	\end{equation}
	then there exist constants $b_j \in \mathbb{R}$, $j = 1, \dots,
	n+1$, such that
	\begin{equation*}
		\omega_c = \omega_\infty + \sum_{j=1}^{n+1} \frac{b_j}{c^{2j}}
		+ \frac{o_c(1)}{c^{2n+2}}.
	\end{equation*}
\end{lemma}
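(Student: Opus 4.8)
The plan is to reduce everything to the algebraic identity
\[
	\omega_c = \scalar{P_c(D)\ue}{\ue} - 2 e_c,
\]
which was already used in the proof of Lemma~\ref{Lem:4.4} (it follows by pairing \eqref{1.10} with $\ue$, using $\|\ue\|_{L^2}=1$ and $\mathcal{H}(\ue) = 2(\scalar{P_c(D)\ue}{\ue} - e_c)$, the latter from the definition of $\mathcal{E}_c$). Since Lemma~\ref{newlemma:1} already supplies the expansion \eqref{eq:estimateec} of $e_c$, the only new ingredient is a matching expansion of the quadratic form $\scalar{P_c(D)\ue}{\ue}$.

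First I would expand the operator via \eqref{new_q5}, writing
\[
	\scalar{P_c(D)\ue}{\ue} = \sum_{j=0}^{n+1}
	\frac{(-1)^j \alpha_{j+1}}{m^{2j+1} c^{2j}}
	\left\|(-\Delta)^{\frac{j+1}{2}}\ue\right\|_{L^2}^2
	+ \mathcal{O}\!\left(\frac{1}{c^{2n+4}}\right),
\]
where the remainder arises from pairing the operator tail $P_{c,n+1}(D)$ with $\ue$ and is controlled by Lemma~\ref{Lem:2.1} together with the uniform $H^s$ bounds of Lemma~\ref{Lem:2.6}. Next, substituting the assumed expansion \eqref{new_q6} of $\ue$ (with $\fe_0 = \uei$) into each squared norm yields, exactly as in the derivation of the constants $a_{1,j}$ in Lemma~\ref{newlemma:1},
\[
	\left\|(-\Delta)^{\frac{j+1}{2}}\ue\right\|_{L^2}^2
	= \sum_{k=0}^{n} \frac{\gamma_{j,k}}{c^{2k}}
	+ \frac{o_c(1)}{c^{2n+2}}, \qquad
	\gamma_{j,k} = \sum_{p+q=k}
	\scalar{(-\Delta)^{\frac{j+1}{2}}\fe_p}{(-\Delta)^{\frac{j+1}{2}}\fe_q},
\]
with each $\gamma_{j,k}$ independent of $c$.

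Collecting the two power-of-$c^{-2}$ series and isolating the leading ($j=k=0$) term $\scalar{P_\infty(D)\uei}{\uei}$, I would obtain
\[
	\scalar{P_c(D)\ue}{\ue} = \scalar{P_\infty(D)\uei}{\uei}
	+ \sum_{j=1}^{n+1} \frac{d_j}{c^{2j}} + \frac{o_c(1)}{c^{2n+2}}
\]
for explicit constants $d_j$ built from the $\gamma_{j,k}$ and the $\alpha_{j+1}$. Combining this with \eqref{eq:estimateec} and the limiting identity $\omega_\infty = \scalar{P_\infty(D)\uei}{\uei} - 2 e_\infty$ then gives the claim, with $b_j = d_j - 2 a_j$.

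The main obstacle is purely the bookkeeping: reorganizing the product of the operator-expansion coefficients and the function-expansion coefficients into a single clean series of the stated order, and verifying that the cross-terms involving the $o_c(1)/c^{2n}$ tail of $\ue$ contribute only at order $o_c(1)/c^{2n+2}$ after pairing (where the uniform Sobolev bounds on $\ue$ are essential so that the scalar products of the tail with the $(-\Delta)^{(j+1)/2}\fe_p$ remain bounded). This is routine and entirely parallel to the estimate of $a_{1,j}$ in Lemma~\ref{newlemma:1}.
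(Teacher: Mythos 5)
Your starting identity $\omega_c = \scalar{P_c(D)\ue}{\ue} - 2e_c$ is correct, and your treatment of all terms carrying a prefactor $c^{-2j}$ with $j\geq 1$ would go through; but the step you dismiss as routine bookkeeping is exactly where the argument breaks, at the $j=0$ term. From the hypothesis \eqref{new_q6} you only know $\ue = \sum_{k=0}^{n}\fe_k c^{-2k} + r_c$ with $\norm{r_c}_{H^s} = o_c(1)c^{-2n}$ (here $\fe_0=\uei$), so in
\begin{equation*}
	\norm{(-\Delta)^{\frac{j+1}{2}}\ue}_{L^2}^2
	= \sum_{k,\ell \leq n} \frac{\scalar{(-\Delta)^{j+1}\fe_k}{\fe_\ell}}{c^{2(k+\ell)}}
	+ 2\scalar{(-\Delta)^{j+1}\Bigl(\sum_{k\leq n}\tfrac{\fe_k}{c^{2k}}\Bigr)}{r_c}
	+ \scalar{(-\Delta)^{j+1}r_c}{r_c}
\end{equation*}
the cross term is only $o_c(1)c^{-2n}$, not $o_c(1)c^{-2n-2}$. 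For $j\geq 1$ this loss is harmless because the whole norm is multiplied by $c^{-2j}$ (this is why the corresponding step producing the $a_{1,j}$ in Lemma \ref{newlemma:1} is legitimate), but your $j=0$ term $\scalar{P_\infty(D)\ue}{\ue}$ has no prefactor, so your expansion of $\scalar{P_c(D)\ue}{\ue}$ is accurate only to $o_c(1)c^{-2n}$ --- one full order short of the conclusion. This leaves both $b_{n+1}$ and the error bound $o_c(1)c^{-2n-2}$ unproved, and would also break the induction in the paper's final lemma, which needs $\omega_c$ to one order beyond the known accuracy of $\ue$. The obstruction is structural, not computational: the quadratic form $\scalar{P_\infty(D)\cdot}{\cdot}$ has non-vanishing first variation at $\uei$, namely $2\scalar{P_\infty(D)\uei}{\cdot} = 2\scalar{\mathcal{N}(\uei)-\omega_\infty\uei}{\cdot}$ by \eqref{1.11}, and this is not proportional to $\scalar{\uei}{\cdot}$; hence the mass constraint cannot convert the linear-in-$r_c$ error into a quadratic one, the way it does for $\mathcal{E}_\infty(\ue)$ inside Lemma \ref{newlemma:1}.

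This is precisely why the paper does not expand $\scalar{P_c(D)\ue}{\ue}$ directly, but instead invokes the Pohozaev identities of Lemma \ref{Lem:4.3} through relation \eqref{4.14}:
\begin{equation*}
	\omega_c = \omega_\infty - 3(e_c-e_\infty)
	- \scalar{(P_\infty(D)-P_c(D))\ue}{\ue}
	- \scalar{(T_c(D)-P_\infty(D))\ue}{\ue}.
\end{equation*}
Here the dangerous difference $\scalar{P_\infty(D)\ue}{\ue}-\scalar{P_\infty(D)\uei}{\uei}$ has been traded, using $\scalar{T_c(D)\ue}{\ue}=-e_c$ and $\scalar{P_\infty(D)\uei}{\uei}=-e_\infty$, for $(e_\infty-e_c)$ plus a scalar product with the operator $T_c(D)-P_\infty(D)$, which is itself $O(c^{-2})$; then $e_c-e_\infty$ is handled by Lemma \ref{newlemma:1}, and both operator-difference terms tolerate the $o_c(1)c^{-2n}$ accuracy of $\ue$. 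If you graft this Pohozaev step onto your argument, the rest of your computation (the expansion \eqref{new_q5}, the coefficients $\gamma_{j,k}$, and the final bookkeeping) can be kept essentially as written.
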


\begin{proof}
	By \eqref{4.14} we have that
	\begin{equation*}
		\omega_c = \omega_\infty - 3 (e_c - e_\infty) -
		\scalar{(P_\infty(D) - P_c(D)) \ue }{\ue } - \scalar{(T_c(D) -
		P_\infty(D)) \ue }{\ue }.
	\end{equation*}
	Let us analyze the different terms.

	From \eqref{new_q5} and assumption \eqref{new_q6}, we have that
	\begin{equation*}
		\scalar{(P_c(D) - P_\infty(D)) \ue }{\ue } = \sum_{j=1}^{n+1}
		\frac{(-1)^j \alpha_{j+1}}{m^{2j+1} c^{2j}} \left\|
		(-\Delta)^{\frac{j+1}{2}} \ue \right\|_{L^2}^2 +
		\mathcal{O}\left(\frac{1}{c^{2n+4}}\right) \\
	\end{equation*}
	and we deduce, as for \eqref{new_q2}
	\begin{equation}
		\label{new_q8}
		\scalar{(P_c(D) - P_\infty(D)) \ue }{\ue } = \sum_{j=1}^{n+1}
		\frac{a_{1,j}}{c^{2j}} + \frac{o_c(1)}{c^{2n+2}}.
	\end{equation}

	Using the Taylor expansion
	\begin{equation*}
		\frac{1}{\sqrt{1+x}} = \sum_{j=0}^{n+1} (-1)^j \frac{1}{4^j}
		\binom{2j}{j} x^j + \mathcal{O}(|x|^{n+2}), \quad |x| < 1,
	\end{equation*}
	we have that the operator $T_c(D)$ admits the formal expansion
	\begin{equation*}
		\begin{split}
			T_c(D) &= \frac{P_c(D)}{\sqrt{-\Delta/(m^2 c^2) + 1}} \\
			&= \left( \sum_{j=0}^{n+1} \frac{(-1)^j
			\alpha_{j+1}}{m^{2j+1} c^{2j}} (-\Delta)^{j+1} \right)
			\left( \sum_{j=0}^{n+1} (-1)^j \frac{1}{(m c)^{2j} 4^j}
			\binom{2j}{j} (-\Delta)^j \right) \\
			&\qquad + \mathcal{O}\left( \frac{1}{c^{2n+4}} \right) \\
			&= P_\infty(D) + \sum_{j=1}^{2n+2} \sum_{\substack{s+t=j}}
			\frac{(-1)^j \alpha_{s+1}}{4^t m^{2j+1} c^{2j}}
			\binom{2t}{t} (-\Delta)^{j+1} + \mathcal{O}\left(
			\frac{1}{c^{2n+4}} \right).
		\end{split}
	\end{equation*}
	and hence
	\begin{equation}\label{new_q9}
		\scalar{(T_c(D) - P_\infty(D)) \ue }{\ue }
		= \sum_{j=1}^{n+1} \frac{b_{1,j}}{c^{2j}} + \frac{o_c(1)}{c^{2n+2}},
	\end{equation}
	with 
	\begin{equation*}
		b_{1,j} = \sum_{\substack{z+\ell = j \\ z \geq 1}} 
		\frac{(-1)^{z}}{m^{2z+1}} \sum_{s + t = z} \sum_{p+q = \ell} 
		\frac{\alpha_{s+1}}{4^{t}} \binom{2t}{t} 
		\scalar{(-\Delta)^{\frac{z+1}{2}} 
		\fe_{q}}{(-\Delta)^{\frac{z+1}{2}} \fe_{p}}.
	\end{equation*}
	Combining \eqref{eq:estimateec}, \eqref{new_q8}, and
	\eqref{new_q9}, we conclude
	\begin{equation*}
		\omega_c = \omega_\infty + \sum_{j=1}^{n+1} \frac{b_j}{c^{2j}} + \frac{o_c(1)}{c^{2n+2}},
	\end{equation*}
	where $b_j = -3a_j + a_{1,j} - b_{1,j}$ for $j = 1, \dots, n+1$.
\end{proof}

\begin{lemma}
	For all $j \in \mathbb{N}$ there exist unique radial functions
	$\fe_{j} \in H^{s}(\mathbb{R}^3)$ and constants $a_j, b_j \in
	\mathbb{R}$ such that for all $n \in \mathbb{N}$
	\begin{equation}\label{new_3.11}
		\left\| c^{2n}\ue  - \sum_{j=0}^{n} c^{2(n-j)}\fe_{j} \right\|_{H^s} \to 0 
		\quad \text{as } c \to +\infty,
	\end{equation}
	and
	\begin{align}
		&\left|c^{2n+2}(e_{c}-e_{\infty}) - \sum_{j=1}^{n+1}
		c^{2(n+1-j)}a_{j} \right| \to 0, \label{new_q14}\\
		&\left|c^{2n+2}(\omega_{c}-\omega_{\infty}) - \sum_{j=1}^{n+1}
		c^{2(n+1-j)}b_{j} \right| \to 0. \label{eq:estimateomegac}
	\end{align}
	In particular, $\fe_{0} = \uei $, and for $n \geq 1$,
	$\fe_{n}$ is the unique radial solution of
	\begin{equation}
		\label{eq:deffkenery}
		\Le \fe_n = - \sum_{j=0}^{n-1} P_{\infty, n-j}(D)\fe_{j} 
		- \sum_{j=0}^{n-1} b_{n-j}\fe_j
		+ \mathcal{T}_n(\fe_0, \fe_{1}, \dots, \fe_{n-1}),
	\end{equation}
	where
	\begin{equation}\label{new_3.12}
		\begin{split}
			\mathcal{T}_{n}(\fe_0, \fe_{1}, \dots, \fe_{n-1})
			&= \sum_{k=2}^{\min\{n,3\}} \frac{1}{k!}
			\sum_{\substack{i_1+\cdots+i_k=n \\ 1\leq i_1, \dots, i_k \leq n-1}}
			\mathcal{N}^{(k)}(\fe_0)[\fe_{i_1}, \dots, \fe_{i_k}].
		\end{split}      
	\end{equation}
	Here $\mathcal{T}_{n}$ is a Hartree-type nonlinearity in $\fe_0, \fe_{1}, \dots, \fe_{n-1}$, and $\mathcal{T}_{1} = 0$.
\end{lemma}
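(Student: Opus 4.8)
The plan is to establish the three assertions \eqref{new_3.11}, \eqref{new_q14} and \eqref{eq:estimateomegac} simultaneously by an interleaved induction on $n$, following the scheme of Lemma \ref{lem:existencefj} for the action ground state but now feeding in the expansions of $e_c$ and $\omega_c$ furnished by Lemmas \ref{newlemma:1} and \ref{newlemma:2}. The feature absent from the action case is the Lagrange multiplier: since the defining relation \eqref{eq:deffkenery} for $\fe_n$ already involves $b_1,\dots,b_n$, the induction must be organised so that the scalars $a_n$, $b_n$ are produced \emph{before} $\fe_n$ is constructed. Throughout, the coefficients $a_j$, $b_j$ are uniquely determined as the Taylor coefficients in $c^{-2}$ of $e_c$ and $\omega_c$, so the values generated at successive steps are automatically consistent.

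First I would record the existence and uniqueness of $\fe_n$. Assume $\fe_0=\uei,\fe_1,\dots,\fe_{n-1}\in\bigcap_{s>0}H^s_r(\mathbb{R}^3)$ and $b_1,\dots,b_n$ have been constructed. The right-hand side of \eqref{eq:deffkenery} then lies in $\bigcap_{s>0}H^s_r(\mathbb{R}^3)$: the terms $P_{\infty,n-j}(D)\fe_j$ are controlled by Lemma \ref{Lem:2.1}, the Hartree-type term $\mathcal{T}_n$ by the trilinear estimate of Lemma \ref{Lem:2.2}, and each $b_{n-j}\fe_j$ is a scalar multiple of $\fe_j$. Because $\Le\colon H^{s+2}_r\to H^s_r$ is invertible (the analogue of Lemma \ref{Lem:3.1}, valid by the non-degeneracy of $\uei$), there is a unique radial $\fe_n$ solving \eqref{eq:deffkenery}.

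The base case $n=1$ is already available: Lemma \ref{Lem:4.5} gives \eqref{new_3.11} for $n=1$, while Lemma \ref{Lem:4.2} and Lemma \ref{Lem:4.4} give \eqref{new_q14} and \eqref{eq:estimateomegac} for $n=0$, identifying $a_1=-\frac{1}{8m^3}\|\Delta\uei\|_{L^2}^2$ and $b_1=\frac{5}{8m^3}\|\Delta\uei\|_{L^2}^2$. For the inductive step I assume \eqref{new_3.11} at order $n$, i.e.\ the expansion \eqref{new_q6}. I then apply Lemma \ref{newlemma:1} and Lemma \ref{newlemma:2} to obtain the constants $a_{n+1}$, $b_{n+1}$ together with the expansions \eqref{new_q14} and \eqref{eq:estimateomegac}. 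With $b_1,\dots,b_{n+1}$ now in hand, I define $\fe_{n+1}$ by \eqref{eq:deffkenery} as in the previous paragraph. It remains to upgrade the expansion of $\ue$ from order $n$ to order $n+1$, namely to prove $\|\fe_{c,n+1}-\fe_{n+1}\|_{H^s}\to 0$ for the rescaled remainder $\fe_{c,n+1}=c^2(\fe_{c,n}-\fe_n)$.

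This last step is the heart of the argument. Starting from \eqref{4.1}--\eqref{new_q11} I would derive, exactly as for \eqref{3.14}, the equation satisfied by $\fe_{c,n+1}$; it coincides with the action-case equation of Lemma \ref{lem:existencefj} plus one extra contribution stemming from $-(\omega_c-\omega_\infty)\ue$. Subtracting the defining equation \eqref{eq:deffkenery} produces an identity $\Le(\fe_{c,n+1}-\fe_{n+1})=(\text{error})$, and Lemma \ref{Lem:3.3} closes the induction once the error is shown to be $o_c(1)+o_c(1)\|\fe_{c,n+1}-\fe_{n+1}\|_{H^s}$. Two of the needed bounds are the verbatim analogues of Claims \eqref{3.17} and \eqref{3.18}, handled by Lemma \ref{Lem:2.4} for the nonlinear part and Lemma \ref{Lem:2.1} for the operator part. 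The genuinely new estimate is the Lagrange-multiplier term: inserting $\omega_c-\omega_\infty=\sum_{k=1}^{n+1}b_k c^{-2k}+o_c(1)c^{-2n-2}$ from Lemma \ref{newlemma:2} and $\ue=\sum_{j=0}^{n}\fe_j c^{-2j}+o_c(1)c^{-2n}$ from the induction hypothesis, the coefficient of $c^{-2(n+1)}$ in the product $(\omega_c-\omega_\infty)\ue$ is precisely $\sum_{j=0}^{n}b_{n+1-j}\fe_j$, which matches the term $-\sum_{j=0}^{n}b_{n+1-j}\fe_j$ in \eqref{eq:deffkenery}; the leftover, once multiplied by $c^{2(n+1)}$, is $o_c(1)$ by the same product-of-expansions bookkeeping used for $\mathcal{T}_n$ in the action case. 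The main obstacle is thus not any single estimate but the combinatorial bookkeeping of these expansions together with the correct logical ordering of the interleaved induction, ensuring that $b_{n+1}$ is generated before $\fe_{n+1}$ is defined.
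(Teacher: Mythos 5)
Your proposal is correct and takes essentially the same route as the paper: an interleaved induction in which the expansions of $e_c$ and $\omega_c$ (Lemmas \ref{newlemma:1}, \ref{newlemma:2}) are generated one step ahead of the construction of $\fe_n$, the equation for the rescaled remainder is compared with \eqref{eq:deffkenery}, and Lemma \ref{Lem:3.3} closes the step once the operator and nonlinear errors (analogues of \eqref{3.17}, \eqref{3.18}) plus the new Lagrange-multiplier error are shown to be $o_c(1)$. Your treatment of that multiplier term — matching the coefficient of $c^{-2(n+1)}$ in the product $(\omega_c-\omega_\infty)\ue$ against $\sum_{j}b_{n+1-j}\fe_j$ — is precisely the content of the paper's Claim \eqref{new_q20} with its $H_{c,k}$, $H_k$ bookkeeping, so the two arguments coincide up to an index shift in where the induction loop is cut.
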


\begin{proof}
	By Lemma \ref{Lem:4.5}, Lemma \ref{newlemma:1}, and Lemma
	\ref{newlemma:2}, we have that \eqref{new_3.11}, \eqref{new_q14}
	and \eqref{eq:estimateomegac} hold for $n=1$.
	
	We now proceed by induction, following the approach in Lemma
	\ref{lem:existencefj}. 
	
	Assume that for some $n \geq 1$, there exist $\fe_{k}$ for $k = 1,
	\dots, n-1$ and $a_k$, $b_k$ for $k = 1, \dots, n$ such that
	\begin{equation}
		\label{new_q18}
		\left\| c^{2k}\ue  - \sum_{j=0}^{k} c^{2(k-j)}\fe_{j}
		\right\|_{H^s} \to 0, \quad 1 \leq k \leq n-1,
	\end{equation}
	and
	\begin{align}
		\label{new_q19}
		&e_c = e_\infty + \sum_{j=1}^{k} \frac{a_j}{c^{2j}} +
		\frac{o_c(1)}{c^{2k}},
		&&\omega_c = \omega_\infty +
		\sum_{j=1}^{k} \frac{b_j}{c^{2j}} + \frac{o_c(1)}{c^{2k}},
		&&1 \leq k \leq n,
	\end{align}
	where the $\fe_{k}$ are the unique solutions of
	\begin{equation*}
		\Le \fe_k = -\sum_{j=0}^{k-1} P_{\infty, k-j}(D)\fe_{j} -
		\sum_{j=0}^{k-1} b_{k-j}\fe_j + \mathcal{T}_k(\fe_0, \fe_{1}, \dots,
		\fe_{k-1}), \quad 1 \leq k \leq n-1.
	\end{equation*}
	
	For $2 \leq k \leq n$, we define
	\begin{equation*}
		\fe_{c,k} = c^2(\fe_{c, k-1} - \fe_{k-1}) = c^{2k} \left( \ue  -
		\sum_{j=0}^{k-1} \frac{\fe_j}{c^{2j}} \right),
	\end{equation*}
	and
	\begin{equation*}
		\Ge_{c, k} = c^2 \left( \Ge_{c, k-1} - \mathcal{T}_{k-1}(\fe_0,
		\fe_{1}, \dots, \fe_{k-2}) \right).
	\end{equation*}
	Furthermore, we let
	\begin{align*}
		&H_{c,1} = b_{c,1}\ue , && H_k = \sum_{j=0}^{k-1} b_{k-j}\fe_j,
		&& 1 \leq k \leq n.
	\end{align*}
	and
	\begin{align}
		\label{new_q15}
		&H_{c,k} = c^2(H_{c,k-1} - H_{k-1}) = c^{2k} \left( 
		\frac{H_{c,1}}{c^2} - \sum_{j=1}^{k-1} \frac{H_j}{c^{2j}} 
		\right), && 1 \leq k \leq n.
	\end{align}
	
	From \eqref{new_q11} and \eqref{4.15}, $\fe_{c,1}$ and $\fe_1$ satisfy
	\begin{align*}
		\Le \fe_{c,1} &= -c^2 P_{c,1}(D)\ue  - H_{c,1} + \Ge_{c,1}, \\
		\Le \fe_1 &= -P_{\infty,1}(D)\uei  - H_1 + \mathcal{T}_1.
	\end{align*}
	By induction, for $\fe_{c,k}$ we have that for every $k = 1, 
	\ldots, n$
	\begin{equation}\label{new_3.14}
		\Le \fe_{c, k} = c^2 \left[ -\sum_{j=0}^{k-2} c^{2k-2j-2}
		P_{c,k-j}(D)\fe_{j} - P_{c,1}(D)\fe_{c,k-1} \right] - H_{c,k}
		+ \Ge_{c,k}.
	\end{equation}
	
	Let $\fe_n$ be the unique radial solution of
	\begin{equation}\label{3.15_1}
		\Le \fe_n = -\sum_{j=0}^{n-1} P_{\infty, n-j}(D)\fe_{j} - H_n
		+ \mathcal{T}_{n}(\fe_0, \fe_{1}, \dots, \fe_{n-1}).
	\end{equation}
	
	Subtracting \eqref{new_3.14} from \eqref{3.15_1} yields
	\begin{multline}\label{new_q21}
		\Le (\fe_{c,n} - \fe_n) = c^2 \left[ -\sum_{j=0}^{n-2}
		c^{2n-2j-2} P_{c,n-j}(D)\fe_{j} - P_{c,1}(D)\fe_{c,n-1}
		\right] \\
		+ \sum_{j=0}^{n-1} P_{\infty, n-j}(D)\fe_{j} + \left(
		\Ge_{c,n} - \mathcal{T}_{n}(\fe_0, \fe_{1}, \dots, \fe_{n-1})
		\right) - (H_{c,n} - H_n).
	\end{multline}
	
	As in \eqref{3.17} and \eqref{3.18}, we have
	\begin{equation}\label{new_3.17}
		\| \Ge_{c,n} - \mathcal{T}_{n}(\fe_0, \fe_{1}, \dots,
		\fe_{n-1}) \|_{H^s} = o_c(1),
	\end{equation}
	and
	\begin{equation}\label{new_3.18}
		\left\| c^2 \left( -\sum_{j=0}^{n-2} c^{2n-2j-2}
		P_{c,n-j}(D)\fe_{j} - P_{c,1}(D)\fe_{c,n-1} \right) +
		\sum_{j=0}^{n-1} P_{\infty, n-j}(D)\fe_{j} \right\|_{H^s} =
		o_c(1).
	\end{equation}
	
	We claim that
	\begin{equation}\label{new_q20}
		\| H_{c,n} - H_n \|_{H^s} = o_c(1).
	\end{equation}
	We postpone the proof of this claim until the end.
	
	Combining Lemma \ref{Lem:3.3} with \eqref{new_q21},
	\eqref{new_3.17}, \eqref{new_3.18}, and \eqref{new_q20}, we obtain
	\begin{equation*}
		\| \fe_{c,n} - \fe_n \|_{H^s} \to 0 \quad \text{as } c \to
		\infty,
	\end{equation*}
	which is equivalent to
	\begin{equation*}
		\left\| c^{2n}\ue - \sum_{j=0}^{n} c^{2(n-j)}\fe_{j}
		\right\|_{H^s} \to 0.
	\end{equation*}
	
	By Lemma \ref{newlemma:1} and Lemma \ref{newlemma:2}, there exist
	constants $a_{n+1}, b_{n+1} \in \mathbb{R}$ such that
	\begin{equation*}
		\left| c^{2n+2}(e_{c} - e_{\infty}) - \sum_{j=1}^{n+1}
		c^{2(n+1-j)}a_{j} \right| \to 0, ~~ \left| c^{2n+2}(\omega_{c}
		- \omega_{\infty}) - \sum_{j=1}^{n+1} c^{2(n+1-j)}b_{j}
		\right| \to 0.
	\end{equation*}
	This completes the induction step and the proof of the lemma.

	\subsection*{Proof of Claim \eqref{new_q20}}

	By assumptions \eqref{new_q18} and \eqref{new_q19},
	\begin{equation}\label{new_q22}
		\begin{split}
			H_{c,1} &= b_{c,1}\ue = \left( \sum_{j=0}^{n-1}
			\frac{b_{j+1}}{c^{2j}} + \frac{o_c(1)}{c^{2n-2}} \right)
			\left( \sum_{j=0}^{n-1} \frac{\fe_{j}}{c^{2j}} +
			\frac{o_c(1)}{c^{2n-2}} \right) \\
			&= \sum_{j=0}^{n-1} \sum_{s+t=j}
			\frac{b_{t+1}\fe_s}{c^{2j}} + \frac{o_c(1)}{c^{2n-2}} \\
			&= \sum_{j=0}^{n-1} \sum_{i=0}^j
			\frac{b_{j+1-i}\fe_i}{c^{2j}} + \frac{o_c(1)}{c^{2n-2}}.
		\end{split}
	\end{equation}
	Using \eqref{new_q22} and \eqref{new_q15}, we get
	\begin{equation*}
		\begin{split}
			H_{c,n} - H_n &= c^{2n} \left( \frac{H_{c,1}}{c^2} -
			\sum_{j=1}^{n} \frac{H_j}{c^{2j}} \right) \\
			&= c^{2n} \left( \sum_{j=1}^{n} \sum_{i=0}^j
			\frac{b_{j-i}\fe_i}{c^{2j}} + \frac{o_c(1)}{c^{2n}} -
			\sum_{j=1}^{n} \sum_{i=0}^j \frac{b_{j-i}\fe_i}{c^{2j}}
			\right) \\
			&= o_c(1).
		\end{split}
	\end{equation*}
	This verifies the claim.
\end{proof}


\end{document}